\newcommand{\dx}{~\mathrm{d}x}
\newcommand{\sym}{\mathrm{sym}}
\newcommand{\Id}{\mathrm{Id}}
\newcommand{\R}{\mathbb{R}}
\def\endproof{\hspace*{\fill}\mbox{\ \rule{.1in}{.1in}}\medskip }
\numberwithin{equation}{section}
\theoremstyle{plain}
\newtheorem*{theorem*}{Theorem}
\newtheorem{theorem}{Theorem}[section]
\newtheorem{lemma}[theorem]{Lemma}
\newtheorem{corollary}[theorem]{Corollary}
\theoremstyle{definition}
\begin{document}
\title [The Monge-Amp\`ere system in dimension two]
{The Monge-Amp\`ere system in dimension two: a further regularity improvement}
\author{Marta Lewicka}
\address{M.L.: University of Pittsburgh, Department of Mathematics, 
139 University Place, Pittsburgh, PA 15260}
\email{lewicka@pitt.edu} 

\date{\today}
\thanks{M.L. was partially supported by NSF grant DMS-2006439. 
AMS classification: 35Q74, 53C42, 35J96, 53A35}

\begin{abstract}
We prove a convex integration result for the
Monge-Amp\`ere system introduced in \cite{lew_conv},
 in case of dimension $d=2$ and arbitrary codimension $k\geq 1$. 
Our prior result \cite{lew_improved} stated flexibility 
up to the H\"older regularity $\mathcal{C}^{1,\frac{1}{1+
 4/k}}$, whereas presently we achieve flexibility up to 
$\mathcal{C}^{1,1}$ when $k\geq 4$ and up to 
$\mathcal{C}^{1,\frac{2^k-1}{2^{k+1}-1}}$ for any $k$. This first result
uses the approach of K\"allen \cite{Kallen}, while the second result
iterates on the approach of Cao-Hirsch-Inauen \cite{CHI} and agrees with it for $k=1$
at the H\"older regularity up to $\mathcal{C}^{1,1/3}$.
\end{abstract}

\maketitle
%\tableofcontents

\section{Introduction}

In this paper we present a new bound for the admissible H\"older
continuity exponent for weak solutions of the
Monge-Amp\`ere system in dimension $d=2$ and arbitrary codimension $k\geq 1$:
\begin{equation}\label{MA}
\begin{split}
& \mathfrak{Det}\,\nabla^2 v= f \quad \mbox{ in }\;\omega\subset\R^2,\\
& \mbox{where }\; \mathfrak{Det}\,\nabla^2 v =
\langle \partial_{11}v, \partial_{22}v\rangle -
\big|\partial_{12} v\big|^2\quad \mbox{ for }\; v:\omega\to\R^k.
\end{split}
\end{equation}
The closely related problem of isometric immersions of a given Riemannian metric $g$:
\begin{equation}\label{II}
\begin{split}
& (\nabla u)^T\nabla u = g \quad\mbox{ in }\;\omega,
\\ &  \mbox{for }\; u:\omega\to \R^{2+k},
\end{split}
\end{equation} 
reduces to (\ref{MA}) upon
taking the family of metrics $\{g_\epsilon=\Id_2+\epsilon A\}_{\epsilon\to 0}$,
each a small perturbation of $\Id_2$, making an ansatz $u_\epsilon =
\mbox{id}_2+ \epsilon v +\epsilon^2 w$, and gathering the lowest
order terms in the $\epsilon$-expansions. This leads to the following system:
\begin{equation}\label{VK}
\begin{split}
& \frac{1}{2}(\nabla v)^T\nabla v + \sym \nabla w = A\quad\mbox{ in }\;\omega,\\
& \mbox{for }\; v:\omega\to \R^k, \quad w:\omega\to\R^2.
\end{split}
\end{equation}
On a simply connected $\omega$, the system (\ref{VK}) is 
equivalent to: $\mbox{curl}\,\mbox{curl}\, \big(\frac{1}{2}(\nabla
v)^T\nabla v \big) = \mbox{curl}\,\mbox{curl}\, A$, and further to:
$\mathfrak{Det}\,\nabla^2 v= -\mbox{curl}\,\mbox{curl}\, A$,
reflecting the agreement of the Gaussian curvatures of $g_\epsilon$
and the surfaces $u_\epsilon (\omega)$ at their lowest order terms in $\epsilon$, and 
bringing us back to (\ref{MA}).

\medskip

\noindent Systems (\ref{MA}) and (\ref{VK}) were introduced and studied in \cite{lew_conv} for
arbitrary dimensions $d$ and $k$, where their flexibility, in the sense of
Theorem \ref{th_final} below, was proved up to the regularity
$\mathcal{C}^{1,\frac{1}{1+d(d+1)/k}}$.
For $d=2$, this means flexibility up
to $\mathcal{C}^{1,\frac{1}{1+6/k}}$, and when $k=1$ this result agrees
with flexibility up to $\mathcal{C}^{1,\frac{1}{7}}$ obtained in
\cite{lewpak_MA}, which was subsequently improved to
$\mathcal{C}^{1,\frac{1}{5}}$ in \cite{CS} (and to $\mathcal{C}^{1,\frac{1}{1+4/k}}$ for $k$ arbitrary in
\cite{lew_improved}), and further to $\mathcal{C}^{1,\frac{1}{3}}$ in \cite{CHI}.
The main purpose of the present paper is to increase the
aforementioned H\"older exponents in the case of codimension $k>1$, obtaining flexibility
up to $\mathcal{C}^{1,1}$ when $k\geq 4$ and up to
$\mathcal{C}^{1,\frac{2^k-1}{2^{k+1}-1}}$ for arbitrary $k\geq
1$. Consequently, this sets the state of the art in
establishing the density of $\mathcal{C}^{1,\alpha}$ solutions to
(\ref{MA}) in $\mathcal{C}^0$, as follows:
$$k=1 \leadsto  \alpha<1/3, \quad k=2 \leadsto
\alpha<3/7, \quad k=3 \leadsto \alpha<7/15, \quad k\geq 4 \leadsto \alpha<1.$$
The large gap between exponents at $k=3$
and $k=4$ is due to the two different techniques in the
Nash-Kuiper iteration scheme: for $k\geq 4$ we use the approach of
K\"allen \cite{Kallen}, while for arbitrary $k$ we iterate the
approach of Cao-Hirsch-Inauen designed in \cite{CHI} at $k=1$.
This approach seemingly yields only the exponent $1/2$ as
$k\to\infty$. Combining the two approaches towards a better
interpolation between their corresponding exponents is an open
problem. 

\smallskip

\noindent We also point out that according to a result due to Poznak (see
\cite[Chapter 2.3]{HH}), any smooth $2$-dimensional metric has a
smooth local embedding in $\R^4$, namely a solution of (\ref{II})
with $k=2$. Our type of density results, albait only addressing the H\"older
continuous solutions, are stronger in the following sense: rather than
yielding existence of a single solution, they
imply that an arbitrary subsolution to (\ref{VK}) or (\ref{II}) can be
approximated by a $\mathcal{C}^{1,\alpha}$ solution.

\bigskip

\noindent Indeed, our main result pertaining to (\ref{VK}) states that a $\mathcal{C}^1$-regular pair $(v,w)$ 
which is a subsolution, can be uniformly approximated
by exact solutions $\{(v_n,w_n)\}_{n=1}^\infty$, as follows:

\begin{theorem}\label{th_final}
Let $\omega\subset\R^2$ be an open, bounded domain and let
$k\geq 1$ be the gi\-ven co\-di\-men\-sion. Given the fields $v\in\mathcal{C}^1(\bar\omega,\R^k)$,
$w\in\mathcal{C}^1(\bar\omega,\R^2)$ and 
$A\in\mathcal{C}^{0,\beta}(\bar\omega,\R^{2\times 2}_\sym)$, assume that:
$$\mathcal{D}=A-\big(\frac{1}{2}(\nabla v)^T\nabla v + \sym\nabla
w\big) \quad\mbox{ satisfies } \quad \mathcal{D}>c\,\Id_d \; \mbox{ on } \; \bar\omega,$$
for some $c>0$, in the sense of matrix inequalities. Then, for every
exponent $\alpha$ with:
\begin{equation}\label{VKrange} 
\begin{array}{ll} 
\displaystyle{\alpha<\min\Big\{\frac{\beta}{2},1\Big\} }& \mbox{for } k\geq 4\vspace{1mm}\\
\displaystyle{\alpha<\min\Big\{\frac{\beta}{2},\frac{2^k-1}{2^{k+1}-1}}\Big\} & \mbox{for any } k,
\end{array}
\end{equation}
and for every $\epsilon>0$, 
there exists $\tilde v\in\mathcal{C}^{1,\alpha}(\bar \omega,\R^k)$,
$\tilde w\in\mathcal{C}^{1,\alpha}(\bar\omega,\R^d)$ such that the following holds:
%\begin{equation}\label{stage_est}
\begin{align*}
& \|\tilde v - v\|_0\leq \epsilon, \quad \|\tilde w - w\|_0\leq \epsilon,
\tag*{(\theequation)$_1$}\refstepcounter{equation} \label{FFbound1}\vspace{1mm}\\ 
& A -\big(\frac{1}{2}(\nabla \tilde v)^T\nabla \tilde v + \sym\nabla
\tilde w\big) =0 \quad \mbox{ in }\;\bar\omega.
\tag*{(\theequation)$_2$} \label{FFbound2}
\end{align*}
%\end{equation}
\end{theorem}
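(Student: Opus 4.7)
The strategy is a Nash--Kuiper style convex integration iteration, producing a sequence $\{(v_q,w_q)\}_{q\geq 0}$ of smooth approximate solutions of (\ref{VK}) whose deficits $\mathcal{D}_q := A - \big(\frac{1}{2}(\nabla v_q)^T\nabla v_q + \sym\nabla w_q\big)$ converge uniformly to zero. At each stage $q$ one tracks three parameters: the deficit size $\delta_q := \|\mathcal{D}_q\|_0$, a mollification length $\ell_q$, and a corrugation frequency $\lambda_q$, with $\ell_q\lambda_q\gg 1$. The per-stage bounds to aim for are
\begin{equation*}
\|\nabla v_{q+1}-\nabla v_q\|_0 + \|\nabla w_{q+1}-\nabla w_q\|_0 \lesssim \delta_q^{1/2},\qquad \|\nabla^2 v_{q+1}\|_0+\|\nabla^2 w_{q+1}\|_0 \lesssim \delta_q^{1/2}\lambda_q,
\end{equation*}
together with the geometric decay $\delta_{q+1}\leq\frac{1}{2}\delta_q$. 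These yield $\mathcal{C}^{1,\alpha}$ convergence of $(v_q,w_q)$ whenever $\sum_q \delta_q^{1/2}\lambda_q^\alpha<\infty$, and the precise admissible balance between $\ell_q$, $\lambda_q$ and $\delta_q$ is exactly what dictates the exponent on the right-hand side of (\ref{VKrange}).

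Each stage decomposes into many \emph{steps}, each killing one rank-one piece of the mollified deficit. Concretely, I would first mollify $(v_q,w_q)$ on scale $\ell_q$ and decompose the smoothed deficit pointwise into a positive combination $\sum_i a_i^2(x)\,\xi_i\otimes\xi_i$ of symmetric rank-one matrices with $\xi_i\in \mathbb{S}^1$; the assumption $\mathcal{D}>c\,\Id_2$ keeps the coefficients $a_i$ bounded away from degeneracy. The H\"older regularity $\mathcal{C}^{0,\beta}$ of $A$ controls the mollification error by $\ell_q^\beta$, and this ultimately forces the bound $\alpha\leq \beta/2$. Each step then appends a corrugation of the form
\begin{equation*}
v \longmapsto v + \frac{a_i}{\lambda_q}\,\Gamma\bigl(\lambda_q\,x\cdot \xi_i\bigr)\,\eta_i,\qquad w \longmapsto w + (\text{primitive correction}),
\end{equation*}
where $\Gamma$ is a smooth $1$-periodic profile with appropriate trigonometric moments and $\eta_i\in\R^k$ is a target-space unit vector; the correction to $w$ is an explicit antiderivative that cancels the non-quadratic and off-diagonal remainders produced by differentiating the new $v$.

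For $k\geq 4$, I would implement K\"allen's construction \cite{Kallen}: within each stage, the vectors $\eta_i$ corresponding to the rank-one directions of the decomposition can be chosen pairwise orthogonal in $\R^k$. Orthogonality decouples the cross terms $\langle\partial_\alpha v^{\text{new}},\partial_\beta v^{\text{new}}\rangle$ across steps, so that the new quadratic form $(\nabla v^{\text{new}})^T\nabla v^{\text{new}}$ reproduces the target deficit \emph{exactly} (after adjusting $w$) rather than only modulo a term of size $\|\nabla v^{\text{correction}}\|_0^2$. This eliminates the $\ell_q\lambda_q$ separation that normally degrades the Nash--Kuiper exponent below $1$, and delivers $\alpha<\min\{\beta/2,1\}$. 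For general $k$, orthogonality is unavailable past four directions and one instead iterates the Cao--Hirsch--Inauen scheme \cite{CHI}. At the first level, using a single extra codimension, the CHI two-step corrugation cancels the principal deficit \emph{and} the leading Hessian error, which recovers $\alpha<1/3$ at $k=1$. Recursively applying the same device in successive fresh codimensions improves the residual error exponent at each level, producing a recursion whose fixed point is exactly $(2^k-1)/(2^{k+1}-1)$.

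The main obstacle I expect is the simultaneous bookkeeping of the commutator error from mollification, the quadratic self-interaction of the corrugations, and the compensating primitive in $w$, while maintaining both the deficit decay $\delta_{q+1}\leq\frac{1}{2}\delta_q$ and the $\mathcal{C}^{1,\alpha}$ budget. The orthogonality gain in the K\"allen regime and the algebraic cancellations in the iterated CHI step are what allow pushing beyond the baseline exponents; verifying that these gains are consistent with uniform estimates as $q\to\infty$, and that the $\mathcal{C}^0$ closeness \ref{FFbound1} follows from a careful choice of initial mollification so that $(v_0,w_0)$ starts $\mathcal{C}^0$-close to $(v,w)$, finally yields the exact solution $(\tilde v,\tilde w)$ as the $\mathcal{C}^{1,\alpha}$ limit and produces the conclusion \ref{FFbound2}.
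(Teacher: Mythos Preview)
Your high-level architecture (Nash--Kuiper iteration on stages, K\"allen for $k\geq 4$, iterated CHI for general $k$, interpolation to $\mathcal{C}^{1,\alpha}$) matches the paper's, but two of the mechanisms you describe are not the ones that actually produce the exponents in (\ref{VKrange}), and following your sketch as written would not close.

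First, the deficit is \emph{not} decomposed as a generic positive rank-one sum $\sum_i a_i^2\,\xi_i\otimes\xi_i$. The paper uses the $d=2$ conformal decomposition $D=\bar a(D)\,\Id_2+\sym\nabla\bar\Psi(D)$ (Lemmas \ref{lem_diagonal}, \ref{lem_diagonal2}); the $\sym\nabla\bar\Psi$ piece is absorbed directly into $w$, leaving only a \emph{scalar} multiple of $\Id_2$ to corrugate away. This is why four codimensions suffice for Nash spirals (two per diagonal entry, Lemma \ref{lem_step}) and why a single codimension can be recycled in the CHI step.

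Second, your explanation of K\"allen is the real gap. Orthogonality of the target vectors $\eta_i$ does \emph{not} make the new quadratic form reproduce the deficit exactly: the identity (\ref{step_err}) always leaves a residual involving $\nabla^2 v$ and $\nabla a\otimes\nabla a$, regardless of orthogonality. K\"allen's gain comes from an \emph{iteration within the stage}: one defines amplitudes $a_1,a_2,\ldots,a_N$ recursively, with $a_{r+1}$ chosen so that $(a_{r+1})^2/2$ absorbs the previous residual $\mathcal{E}_r$, and the telescoped error $\mathcal{E}_N-\mathcal{E}_{N-1}$ is then of order $(\lambda l)^{-N}$ (see \ref{Fbound3K}). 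This is what decouples the second-derivative blow-up rate from the deficit decay rate and yields $\alpha$ arbitrarily close to $1$; orthogonality alone would only recover the standard Nash exponent.

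Third, for the iterated CHI construction the essential point you omit is the \emph{anisotropy}: the operator $\bar a$ in Lemma \ref{lem_diagonal2} satisfies the directional estimate (v), so that a $\partial_1$-derivative of $\bar a(D)$ costs only a $\partial_2$-derivative of $D$ when $D_{22}=0$. This is precisely what allows a corrugation in $x_1$ at frequency $\lambda_i$ followed by one in $x_2$ at a much higher frequency $\bar\lambda_i$ to reduce the deficit by the ratio $\lambda_i/\bar\lambda_i$ without paying $\bar\lambda_i$ in the first step's derivative bounds. The recursion on $k$ codimensions with the two-scale frequencies $\lambda_i\leq\bar\lambda_i$ (see (\ref{freq1})) is what produces $(2^k-1)/(2^{k+1}-1)$; a straight repetition of CHI without this anisotropic bookkeeping would not improve beyond $1/3$.
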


\noindent The above result implies the aforementioned density of
solutions to (\ref{MA}), as in \cite{lew_conv}:

\begin{corollary}\label{th_CI_weakMA} 
For any $f\in L^{\infty} (\omega, \R)$ on an open, bounded, simply connected
domain $\omega\subset\mathbb{R}^2$, the following holds.
Fix $k\geq 1$ and fix an exponent $\alpha$ in the range (\ref{VKrange}).
Then the set of $\mathcal{\mathcal{C}}^{1,\alpha}(\bar\omega, \R^k)$
weak solutions to (\ref{MA}) is dense in $\mathcal{\mathcal{C}}^0(\bar\omega, \R^k)$. 
Namely, every $v\in \mathcal{\mathcal{C}}^0(\bar\omega,\R^k)$ is the
uniform limit of some sequence $\{v_n\in\mathcal{\mathcal{C}}^{1,\alpha}(\bar\omega,\R^k)\}_{n=1}^\infty$,
such that:
\begin{equation*} 
\mathfrak{Det}\, \nabla^2 v_n  = f \quad \mbox{ on } \; \omega\;
\mbox{ for all }\; n=1\ldots\infty.
\end{equation*}
\end{corollary}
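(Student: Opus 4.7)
The plan is to reduce the Monge-Amp\`ere problem (\ref{MA}) to the Von K\'arm\'an system (\ref{VK}) via the $\curl\,\curl$ identity recalled in the introduction, and then to invoke Theorem \ref{th_final}. The key algebraic observation is that on the simply connected domain $\omega$, the distributional equation $\mathfrak{Det}\,\nabla^2\tilde v = f$ is equivalent to the existence of a field $\tilde w:\omega\to\R^2$ together with some symmetric matrix field $A$ satisfying $-\curl\,\curl\,A = f$, for which the exact Von K\'arm\'an identity $\frac{1}{2}(\nabla\tilde v)^T\nabla\tilde v+\sym\nabla\tilde w = A$ holds.

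First I would construct a suitable $A$. Since $f\in L^\infty(\omega,\R)$, solving the Poisson equation $\Delta a = -f$ on $\omega$ yields $a\in W^{2,p}$ for every $p<\infty$, hence $a\in\mathcal{C}^{1,\beta}$ for every $\beta<1$. Setting $A := a\,\Id_2$, one has $\curl\,\curl\,A = \Delta a = -f$ and $A\in\mathcal{C}^{0,\beta}(\bar\omega,\R^{2\times 2}_\sym)$, with $\beta<1$ chosen so that the prescribed $\alpha$ lies in the range (\ref{VKrange}).

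Next, given $v\in\mathcal{C}^0(\bar\omega,\R^k)$, extend across $\partial\omega$ and mollify to obtain $\bar v\in\mathcal{C}^1(\bar\omega,\R^k)$ with $\|\bar v-v\|_0\leq \epsilon/2$. Set $\bar w(x):=-N\,x$ for a scalar $N>0$ large enough that
$$\mathcal{D} \;:=\; A - \frac{1}{2}(\nabla \bar v)^T\nabla \bar v - \sym\nabla \bar w \;=\; A - \frac{1}{2}(\nabla \bar v)^T\nabla \bar v + N\,\Id_2 \;>\; c\,\Id_2$$
on the compact $\bar\omega$, for some $c>0$; this is possible because $A$ and $\nabla\bar v$ are uniformly bounded there. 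Applying Theorem \ref{th_final} to the triple $(\bar v,\bar w,A)$ with tolerance $\epsilon/2$ produces $\tilde v\in\mathcal{C}^{1,\alpha}(\bar\omega,\R^k)$ and $\tilde w\in\mathcal{C}^{1,\alpha}(\bar\omega,\R^2)$ satisfying $\|\tilde v-\bar v\|_0\leq\epsilon/2$ and the exact relation $\frac{1}{2}(\nabla\tilde v)^T\nabla\tilde v+\sym\nabla\tilde w = A$ on $\bar\omega$; the triangle inequality gives $\|\tilde v-v\|_0\leq\epsilon$.

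Finally, taking the distributional $\curl\,\curl$ on both sides of this exact identity annihilates the $\sym\nabla\tilde w$ term, leaving $\curl\,\curl\bigl(\frac{1}{2}(\nabla\tilde v)^T\nabla\tilde v\bigr) = \curl\,\curl\,A = -f$, which by the identification recalled in the introduction is precisely $\mathfrak{Det}\,\nabla^2\tilde v = f$ in the distributional sense on $\omega$. I do not anticipate a serious analytic obstacle, since the convex-integration content is already packaged in Theorem \ref{th_final}; the only delicate point is to ensure that $A$ has enough H\"older regularity for the chosen $\alpha$ to fall inside (\ref{VKrange}), which is exactly why gaining two derivatives on $f$ via Poisson is the appropriate normalization.
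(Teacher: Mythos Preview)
Your argument is correct and follows exactly the reduction the paper itself invokes (it defers the proof to \cite{lew_conv}): build $A$ with $-\curl\,\curl\,A=f$, manufacture a strict subsolution $(\bar v,\bar w)$ by adding a large multiple of $-x$ to $w$, apply Theorem~\ref{th_final}, and read off $\mathfrak{Det}\,\nabla^2\tilde v=f$ from the exact Von~K\'arm\'an identity via $\curl\,\curl$.

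One bookkeeping point deserves tightening. You record $a\in\mathcal{C}^{1,\beta}$ for every $\beta<1$ but then only assert $A\in\mathcal{C}^{0,\beta}$ with $\beta<1$; taken literally, the constraint $\alpha<\beta/2$ in (\ref{VKrange}) would then cap you at $\alpha<1/2$, which is weaker than the claimed $\alpha<1$ for $k\ge 4$. The fix is to use the full regularity you already have: since $a\in\mathcal{C}^{1,\gamma}$ for every $\gamma<1$, the mollification bound \ref{stima2} yields $\|A-A\ast\phi_l\|_0\le C\,l^{1+\gamma}\|A\|_{1,\gamma}$, so the parameter playing the role of $\beta$ in Theorems~\ref{thm_stage}--\ref{th_old} can be taken arbitrarily close to $2$, and the $\min\{\beta/2,\cdot\}$ constraint becomes inactive. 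For $k\le 3$ your stated $\beta<1$ already suffices, since $\frac{2^k-1}{2^{k+1}-1}<\frac12$ there.
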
 

\medskip

\noindent The main new technical ingredient allowing for the flexibility
stated in Theorem \ref{th_final}, is the ``stage''-type
constructions in the following two results:

\begin{theorem}\label{thm_stage}
Let $\omega\subset\R^2$ be an open, bounded, smooth planar domain and
let $k\geq 4$. Fix an exponent $\gamma\in (0,1)$ and an integer $N\geq 1$. Then,
there exists $l_0\in (0,1)$ depending only on $\omega$, and there exists
$\sigma_0\geq 1$ depending on $\omega,\gamma, N$,
such that the following holds. Given $v\in\mathcal{C}^2(\bar\omega+\bar B_{2l}(0),\R^k)$, 
$w\in\mathcal{C}^2(\bar\omega+\bar B_{2l}(0),\R^2)$, 
$A\in\mathcal{C}^{0,\beta}(\bar\omega+\bar B_{2l}(0),\R^{2\times 2}_\sym)$ defined on
the closed $2l$-neighbourhood of $\omega$, and given
constants $l, \lambda, M>0$ with the properties:
\begin{equation}\label{Assu}
l\leq l_0,\qquad \lambda^{1-\gamma} l\geq\sigma_0, \qquad
M\geq\max\{\|v\|_2, \|w\|_2, 1\},
\end{equation}
there exist $\tilde v\in\mathcal{C}^2(\bar \omega+\bar B_{l}(0),\R^k)$,
$\tilde w\in\mathcal{C}^2(\bar\omega+\bar B_{l}(0),\R^2)$ such that, denoting the defects:
$$\mathcal{D}=A -\big(\frac{1}{2}(\nabla v)^T\nabla v + \sym\nabla
w\big), \qquad \tilde{\mathcal{D}} =A -\big(\frac{1}{2}(\nabla \tilde
v)^T\nabla \tilde v + \sym\nabla\tilde w\big), $$
the following bounds are valid:
\begin{align*}
& \begin{array}{l}
\|\tilde v - v\|_1\leq C\lambda^{\gamma/2}\big(\|\mathcal{D}\|_0^{1/2} + lM\big), \vspace{1.5mm}\\
\|\tilde w -w\|_1\leq C\lambda^{\gamma}\big(\|\mathcal{D}\|_0^{1/2}
+ lM\big) \big(1+ \|\mathcal{D}\|_0^{1/2} +lM +\|\nabla v\|_0\big), 
\end{array}\vspace{3mm} \tag*{(\theequation)$_1$}\refstepcounter{equation} \label{Abound1}\\
&  \begin{array}{l}
\|\nabla^2\tilde v\|_0\leq C \lambda
\lambda^{\gamma/2}\big(\|\mathcal{D}\|_0^{1/2} + lM\big), \vspace{1.5mm}\\
\|\nabla^2\tilde w\|_0\leq C \lambda
\lambda^{\gamma}\big(\|\mathcal{D}\|_0^{1/2} + lM\big)  \big(1+\|\mathcal{D}\|_0^{1/2} + lM
+ \|\nabla v\|_0\big), 
\end{array} \vspace{3mm} \tag*{(\theequation)$_2$}\label{Abound2} \\ 
& \begin{array}{l}
\|\tilde{\mathcal{D}}\|_0\leq C\big(l^\beta \|A\|_{0,\beta} +
\displaystyle{\frac{\lambda^\gamma}{(\lambda l)^N}}\big(
\|\mathcal{D}\|_0 +(lM)^2\big). 
\end{array} \tag*{(\theequation)$_3$} \label{Abound3}
\end{align*}
The norms of the maps $v, w, A, \mathcal{D}$ and $\tilde v,
\tilde w, \tilde{\mathcal{D}}$ are taken on the respective domains of
the maps' definiteness.
The constants $C$ depend only on $\omega, \gamma, N$.
\end{theorem}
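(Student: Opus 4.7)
The plan is to execute one stage of a Nash--K\"allen convex integration scheme. The construction will mollify the data $(v,w,A)$ at a small scale $l$, decompose the resulting defect as a convex combination of rank-one symmetric tensors, and perturb $v$ by high-frequency trigonometric oscillations of frequency $\lambda$ whose squared amplitudes reproduce the defect, while correcting $w$ to absorb the residual oscillating cross terms. The codimension hypothesis $k\geq 4$ enters through the choice of enough linearly independent amplitude directions in $\R^k$, in order to realise the $\sin^2+\cos^2\equiv 1$ cancellation of K\"allen with no low-frequency leftover in the quadratic contribution.

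I would first convolve $v,w,A$ with a standard symmetric kernel at scale $l$, obtaining $v_l, w_l, A_l$ on the $l$-neighbourhood of $\omega$. Classical mollification estimates give $\|\nabla^m v_l\|_0+\|\nabla^m w_l\|_0 \leq Cl^{2-m}M$ for $m\geq 2$, $\|v-v_l\|_1+\|w-w_l\|_1 \leq ClM$, and $\|A-A_l\|_0\leq Cl^\beta\|A\|_{0,\beta}$, so that the mollified defect $\bar{\mathcal{D}}:=A_l-\tfrac12(\nabla v_l)^T\nabla v_l-\sym\nabla w_l$ differs from $\mathcal{D}$ by $O\big(l^\beta\|A\|_{0,\beta}+lM(\|\nabla v\|_0+lM)\big)$. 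After adding a small isotropic term $\rho(x)\Id$ of size $\|\mathcal{D}\|_0+(lM)^2$ (which will be matched by a companion radial oscillation at the next step), decompose
\[
\bar{\mathcal{D}}(x)+\rho(x)\Id=\sum_{j=1}^J a_j(x)^2\,\eta_j\otimes\eta_j
\]
with $\eta_j\in\R^2$ fixed unit vectors and amplitudes $a_j\in\mathcal{C}^2$ satisfying $\|a_j\|_m\leq Cl^{-m}(\|\mathcal{D}\|_0^{1/2}+lM)$ for $m\leq 2$.

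Exploiting $k\geq 4$, I would then select unit vectors $\xi_j,\zeta_j\in\R^k$ with the orthogonality relations required below and define
\[
\tilde v(x)=v_l(x)+\frac{\sqrt{2}}{\lambda}\sum_{j=1}^J a_j(x)\bigl(\xi_j\sin(\lambda x\cdot\eta_j)+\zeta_j\cos(\lambda x\cdot\eta_j)\bigr).
\]
Since $|\xi_j\cos(\cdot)-\zeta_j\sin(\cdot)|^2\equiv 1$ when $\xi_j\perp\zeta_j$ are unit, the diagonal ($j=k$) quadratic contribution to $(\nabla\tilde v)^T\nabla\tilde v-(\nabla v_l)^T\nabla v_l$ is exactly $2\sum_j a_j^2\,\eta_j\otimes\eta_j=2\bar{\mathcal{D}}+2\rho\Id$ with no residual oscillation. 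The off-diagonal cross terms ($j\neq k$) oscillate at the combination frequencies $\lambda(\eta_j\pm\eta_k)\neq 0$, and together with the linear cross terms $\partial v_l\cdot\partial(\text{oscillation})$ and $O(1/\lambda)$ tails from differentiated amplitudes, they all have the symbolic form $\Phi(x)\sin(\lambda x\cdot\mu)$ or $\Phi(x)\cos(\lambda x\cdot\mu)$ with smooth $\Phi$. I would therefore set
\[
\tilde w(x)=w_l(x)+\frac{1}{\lambda}\sum_\alpha\bigl(P_\alpha(x)\cos(\lambda x\cdot\mu_\alpha)+Q_\alpha(x)\sin(\lambda x\cdot\mu_\alpha)\bigr),
\]
with $P_\alpha,Q_\alpha\in\R^2$ chosen algebraically so that $\sym\nabla\tilde w-\sym\nabla w_l$ cancels these oscillating terms up to an $O(1/\lambda^2)$ smooth remainder.

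The first-order bounds \ref{Abound1} and second-order bounds \ref{Abound2} then follow by direct differentiation of the explicit ans\"atze: each $\partial_x$ either yields $\lambda$ on an oscillating factor or $l^{-1}$ on a mollified coefficient, and the assumption $\lambda^{1-\gamma}l\geq\sigma_0$ lets every stray $l^{-1}$ be absorbed into $\lambda^\gamma$. For the defect estimate \ref{Abound3}, the new defect reduces to $(A-A_l)+\mathcal{R}$, where $\mathcal{R}$ is a finite sum of smooth oscillatory tensors $F_\alpha(x)e^{\pm i\lambda x\cdot\mu_\alpha}/\lambda^2$ with $\|F_\alpha\|_m\leq Cl^{-m}(\|\mathcal{D}\|_0+(lM)^2)$. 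Applying $N$ integrations by parts against the non-stationary phase produces a gain $(\lambda l)^{-N}$ from the derivatives hitting $F_\alpha$, together with one $\lambda^\gamma$ loss generated by the last derivative, which is the claimed bound. This bookkeeping step---tracking every $O(1/\lambda)$ remainder through the perturbation and the corrector, and justifying the non-stationary phase integration over a bounded domain---is the chief technical obstacle; the codimension assumption $k\geq 4$ is what makes the underlying algebraic ansatz clean in the first place.
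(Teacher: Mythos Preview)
Your proposal misidentifies the mechanism behind the $(\lambda l)^{-N}$ gain in \ref{Abound3}. You claim it comes from ``$N$ integrations by parts against the non-stationary phase'' applied to the residual oscillatory error $\mathcal{R}$. But $\|\tilde{\mathcal{D}}\|_0$ is a pointwise supremum, not an integral, so stationary-phase reasoning does not apply. If you instead mean iterated absorption of oscillatory errors into $w$---writing $F(x)\sin(\lambda x\cdot\mu)$ as $\sym\nabla\big(\tfrac{G}{\lambda}\cos(\lambda x\cdot\mu)\big)$ plus lower order---this also fails: the map $G\mapsto\sym(G\otimes\mu)$ from $\R^2$ to $\R^{2\times 2}_{\sym}$ has a one-dimensional cokernel (the $\mu^\perp\otimes\mu^\perp$ component), so for instance the $\partial_{22}v_l$ piece of an error like $\tfrac{a}{\lambda}\sin(\lambda x_1)\nabla^2 v_l$ cannot be cancelled this way. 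Moreover the error from the spiral ansatz contains a genuinely non-oscillatory term $\tfrac{1}{\lambda^2}\nabla a\otimes\nabla a$ that no phase argument addresses.

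The paper's mechanism is K\"allen's iteration \emph{on the amplitude}. One defines a finite sequence $a_0=0,a_1,\ldots,a_N$ of scalar amplitude fields, where each $a_r$ is chosen via the conformal decomposition $D=\bar a(D)\,\Id_2+\sym\nabla\bar\Psi(D)$ of Lemma~\ref{lem_diagonal} so that $\tfrac{1}{2}a_r^2\,\Id_2+\sym\nabla\Psi_r=\tilde C\,\Id_2+\mathcal{D}_0-\mathcal{E}_{r-1}$, with $\mathcal{E}_{r-1}$ the exact error the Nash-spiral ansatz (Lemma~\ref{lem_step}) would produce with amplitude $a_{r-1}$. Then $a_r^2-a_{r-1}^2=-2\bar a(\mathcal{E}_{r-1}-\mathcal{E}_{r-2})$, and one proves inductively that $\|\mathcal{E}_r-\mathcal{E}_{r-1}\|_0\lesssim (\lambda l)^{-r}\tilde C$. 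The actual perturbation is performed \emph{once}, with the last amplitude $a_N$; the resulting defect is exactly $(A-A_0)-(\mathcal{E}_N-\mathcal{E}_{N-1})$, giving the claimed bound. A secondary issue: your generic rank-one decomposition $\sum_j a_j^2\,\eta_j\otimes\eta_j$ needs $J\geq 3$ directions for $\R^{2\times 2}_{\sym}$ and hence $k\geq 2J=6$ codimensions with the $\sin$/$\cos$ pairing; the paper's conformal decomposition absorbs $\sym\nabla\Psi$ into $w$ and leaves only $a^2\,\Id_2=a^2(e_1\otimes e_1+e_2\otimes e_2)$, which is why $k\geq 4$ suffices.
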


\smallskip

%\noindent Iterating the ``stage'' argument from \cite{CHI}, we get:

\begin{theorem}\label{thm_stageCHI}
Let $\omega\subset\R^2$ be an open, bounded, smooth planar
domain. Given any codimension $k\geq 1$, the result in Theorem
\ref{thm_stage} holds true, for each sufficiently small $\gamma$ (in
function of $N$, $k$), with \ref{Abound1} -- \ref{Abound3} replaced with:
\begin{align*}
& \begin{array}{l}
\|\tilde v - v\|_1\leq C\lambda^{\gamma/2}\big(\|\mathcal{D}\|_0^{1/2} + lM\big), \vspace{1.5mm}\\
\|\tilde w -w\|_1\leq C\lambda^{\gamma}\big(\|\mathcal{D}\|_0^{1/2}
+ lM\big) \big(1+ \|\mathcal{D}\|_0^{1/2} +lM +\|\nabla v\|_0\big), 
\end{array}\vspace{3mm} \tag*{(\theequation)$_1$}\refstepcounter{equation} \label{Abound12}\\
&  \begin{array}{l}
\|\nabla^2\tilde v\|_0\leq C \displaystyle{\frac{(\lambda
  l)^{N+1}\lambda^{\gamma/2}}{l}}\big(\|\mathcal{D}\|_0^{1/2} + lM\big), 
\vspace{1.5mm}\\ 
\|\nabla^2\tilde w\|_0\leq C \displaystyle{\frac{(\lambda l)^{N+1}\lambda^{\gamma}}{l}}
\big(\|\mathcal{D}\|_0^{1/2} + lM\big)  \big(1+\|\mathcal{D}\|_0^{1/2} + lM+ \|\nabla v\|_0\big), 
\end{array} \vspace{3mm} \tag*{(\theequation)$_2$}\label{Abound22} \\ 
& \begin{array}{l}
\|\tilde{\mathcal{D}}\|_0\leq C\Big(l^\beta \|A\|_{0,\beta} +
\displaystyle{\frac{\lambda^{\gamma}}{(\lambda
    l)^{\frac{2(N^2-1)}{N+3}\big(1-\big(\frac{N-1}{2(N+1)}\big)^k\big)}}}\big(
\|\mathcal{D}\|_0 +(lM)^2\big)\Big). 
\end{array} \tag*{(\theequation)$_3$} \label{Abound32}
\end{align*}
\end{theorem}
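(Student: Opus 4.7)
\medskip

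\noindent\textbf{Proof proposal.} The plan is to iterate, $k$ times, the Cao--Hirsch--Inauen single-stage construction from \cite{CHI}, exploiting the $k$ codimensions of the target $\R^k$ to place $k$ successive corrugations in mutually orthogonal fresh directions. A single CHI-type sub-step, with frequency $\mu$ and order-$N$ profile, reduces a rank-one residual by a factor $(\mu l)^{-(N-1)}$; iterating such sub-steps $k$ times with geometrically coupled frequencies produces the compound improvement exponent
\[
q(k,N) \;=\; (N-1)\,\frac{1-r^k}{1-r} \;=\; \frac{2(N^2-1)}{N+3}\bigg(1 - \bigg(\frac{N-1}{2(N+1)}\bigg)^{k}\bigg), \qquad r=\frac{N-1}{2(N+1)},
\]
which is precisely the exponent in \ref{Abound32}, and drives the highest sub-step frequency up to the $(\lambda l)^{N+1}/l$ factor of \ref{Abound22}.

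First I would mollify $(v,w,A)$ at scale $l$, so that the H\"older modulus of $A$ yields the $l^\beta\|A\|_{0,\beta}$ term in \ref{Abound32} and the standard commutator estimate $\|\mathcal{D}_l-\mathcal{D}\|_0\leq C(lM)^2$ yields the $(lM)^2$ contribution. Next I would decompose the (slightly shifted) mollified defect via the usual partition of unity on the positive-definite cone in $\R^{2\times 2}_{\sym}$ as $\sum_{i=1}^{m}a_i^2(x)\,\eta_i\otimes\eta_i$, in a finite, fixed collection $\{\eta_i\}\subset S^1$.

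For each rank-one term I would construct $\tilde v,\tilde w$ through a nested chain of $k$ intermediate sub-steps $(v^{(j)},w^{(j)})_{j=0}^{k}$ starting at $(v^{(0)},w^{(0)})=(v,w)$, in which sub-step $j$ adds the corrugation
\[
v^{(j)}-v^{(j-1)} \;=\; \frac{\alpha_{i,j}(x)}{\mu_j}\,\Gamma_N\!\big(\mu_j\, x\cdot\eta_i\big)\, e_j,
\]
placed in the fresh codimensional direction $e_j\in\R^k$ and accompanied by the standard $O(1/\mu_j)$ Nash correction of $w$; here $\Gamma_N$ is the CHI profile cancelling the linearized rank-one defect modulo an $(N{+}1)$-st order Taylor remainder in the phase. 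The frequencies $\mu_1<\cdots<\mu_k$ form a geometric progression whose ratio, together with the amplitudes $\alpha_{i,j}$, are tuned so that the residual amplitudes satisfy the recursion $a_{i,j}^2\lesssim(\mu_j l)^{-(N-1)}a_{i,j-1}^2$, telescoping into the claimed total residual of size $(\lambda l)^{-q(k,N)}$. The $C^1$ bound \ref{Abound12} then follows by summing the $\alpha_{i,j}$'s, each of order $\|\mathcal{D}\|_0^{1/2}+lM$ up to the $\lambda^{\gamma/2}$ bookkeeping loss, and the $C^2$ bound \ref{Abound22} follows by differentiating the highest-frequency corrugation at $\mu_k$.

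The principal obstacle will be the rigorous propagation of errors between sub-steps: the CHI cancellation at sub-step $j$ is exact only modulo a Taylor remainder of order $N+1$ whose size involves derivatives of $\alpha_{i,j}$, and these amplitudes depend on all previously constructed (lower-frequency) corrugations. Ensuring that this remainder is absorbed by the intended improvement at sub-step $j{+}1$ is exactly what forces the geometric ratio $r$ above and the smallness of $\gamma$ ``in function of $N,k$'' in the statement; in particular, as $k$ grows the acceptable window for $\gamma$ shrinks because the tolerable remainder at the last sub-step scales like $r^k$. A secondary technicality is the Whitney-type extension of the amplitudes $\alpha_{i,j}$ from $\bar\omega$ onto its $2l$-neighborhood at each level, handled as in \cite{lew_improved, CHI}, which ensures that the $C^2$ bookkeeping and the composition of the $k$ successive sub-steps remain valid on the correct (shrinking) neighborhood throughout the stage.
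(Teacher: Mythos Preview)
Your outline misidentifies the core CHI mechanism, and this is a genuine gap rather than a cosmetic difference. There is no ``order-$N$ CHI profile $\Gamma_N$'' that cancels a rank-one defect modulo a Taylor remainder; the profiles remain the standard Kuiper ones $\Gamma(t)=2\sin t$, $\bar\Gamma(t)=-\frac12\sin(2t)$ from Lemma~\ref{lem_step2}. The index $N$ enters not through the profile but through $N$ K\"allen-type iterations in a counter $r=1,\ldots,N$ that determine the amplitude $a_N^i$ by solving, at each iteration, the linear equation $(a_r^{i})^2=\bar a(\tilde C_{i-1}\Id_2+\chi_{i-1}\mathcal{D}_{i-1}-\chi_{i-1}\mathcal{E}_{r-1}^i)$. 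The decomposition used here is the conformal one $D=\bar a(D)\Id_2+\sym\nabla\bar\Psi(D)$ of Lemma~\ref{lem_diagonal2}, not the rank-one partition-of-unity decomposition $\sum a_i^2\eta_i\otimes\eta_i$ that you propose. This matters: the whole scheme hinges on property~(v) of Lemma~\ref{lem_diagonal2}, which trades a $\partial_1$ derivative of $\bar a(D)$ for a $\partial_2$ derivative of $D$ whenever $D_{22}=0$. That anisotropic gain is what allows the amplitude $a_N^i$ to be smooth in $x_1$ at the \emph{old} frequency scale $\bar\lambda_{i-1}$ even though the error $\mathcal{E}^i_r$ oscillates at the new frequency $\lambda_i$ in $x_1$; without it the K\"allen iteration would not converge and your claimed recursion $a_{i,j}^2\lesssim(\mu_jl)^{-(N-1)}a_{i,j-1}^2$ has no basis.

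Second, each codimension step in the paper adds \emph{two} corrugations, not one: a first corrugation in the $x_1$ direction at frequency $\lambda_i$ (steps 2--7 of the proof) which, via the K\"allen iteration, reduces the deficit to an error $\mathcal{E}_N^i-\mathcal{E}_{N-1}^i$ of size $\tilde C_{i-1}(\lambda_i/\bar\lambda_{i-1})^{-N}$ \emph{plus} a residual $(b^i)^2\,e_2\otimes e_2$ of undiminished size $\tilde C_{i-1}$; then a second corrugation in the $x_2$ direction at a higher frequency $\bar\lambda_i$ (step 8) absorbs this $e_2\otimes e_2$ block at the cost $\tilde C_{i-1}\lambda_i/\bar\lambda_i$. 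The coupling $\bar\lambda_i/\lambda_i=(\lambda_i/\bar\lambda_{i-1})^N$ balances these two losses, and the further balance $\tilde C_{i-1}^{1/2}\bar\lambda_i\sim\tilde C_0^{1/2}\bar\lambda_1$ (from controlling $\|\nabla^2 v_i\|_0$) forces $\bar\lambda_il=(\bar\lambda_1l)^{\alpha_i}$ with $\alpha_i=\big(1-r^i\big)/(1-r)$ and $r=\tfrac{N-1}{2(N+1)}$. This is where your exponent $q(k,N)$ actually comes from; it is not produced by a single-direction corrugation chain with a high-order profile. Your proposal as written would need to be rebuilt around the conformal decomposition, the anisotropic estimate~(v), and the two-corrugation structure per codimension.
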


\noindent 
By assigning $N$ sufficiently large, we see that the quotient $r$ of the
blow-up rate of $\|\nabla^2\tilde v\|_0$
with respect to the rate of decay of $\|\tilde{\mathcal{D}}\|_0$ can be taken
arbitrarily close to $0$ in Theorem \ref{thm_stage} and arbitrarily
close to $\frac{1}{2\big(1-\frac{1}{2^k}\big)}$ in Theorem \ref{thm_stageCHI}.
Since the H\"older regularity exponent equals $\frac{1}{1+2r}$ (see
section \ref{sec4} and Theorem \ref{th_old}), this
implies the respective ranges in (\ref{VKrange}).

\medskip

\noindent The layout of the paper is as follows. In section
\ref{sec_step} we gather the preparatory results: the
mollification and the commutator estimates, followed by two
decompositions of the $\R^{2\times 2}_\sym$-valued matrix
fields on $\omega\subset\R^2$, into a symmetric gradient and a multiple
of $\Id_2$. The first decomposition, used towards Theorem
\ref{thm_stage}, passes through solving the Poisson
equation with Dirichlet boundary data and thus it does not commute with
differentiation. The second decomposition, on which Theorem
\ref{thm_stageCHI} relies, uses convolution with the
Poisson kernel and thus it has the desired properties
including commuting with differentiation. We then present two convex
integration ``step'' constructions: the first one with Nash's spirals
as the oscillatory perturbations added to the fields $(v,w)$,
and the second one with Kuiper's corrugations.

\smallskip

\noindent 
Section \ref{sec_stage} contains a proof of Theorem \ref{thm_stage},
where K\"allen's iteration procedure is used in codimension $k\geq
4$ allowing for an almost complete absorption of the diagonalized deficit field $\mathcal{D}$ 
in a single step. 
Section \ref{sec_stage2} contains the proof of
Theorem \ref{thm_stageCHI}, which combines iterating on the
codimension with K\"allen's iterations within each step, that
instead of absorbing the full error as explained before, essentially transfers one of
its (diagonal) modes onto the other existing mode. In \cite{CHI}
this observation allowed to argue that a single convex integration
step, necessitating only one codimension $k=1$, suffices to absorb the first order deficit, thus
yielding the regularity exponent $1/3$. Presently, we 
iterate this construction over $k$ codimensions, superposing
perturbation with appropriately increasing frequencies. Each iteration
then reduces the deficit by a factor commensurate with the ratio of the
current and the previous perturbation frequencies,
while the second derivative of $v$ increases by the factor of the next
frequency times the square root of the deficit, see the estimates \ref{INDU1} - \ref{INDU3}. 
This leads to the relative estimates in \ref{Abound22} and
\ref{Abound32} whereas the $C^1$ norm of the accumulated perturbation
remains controlled as in \ref{Abound12}. An interpolation to
$\mathcal{C}^{1,\alpha}$ and the Nash-Kuiper iteration on stages is
the content of section \ref{sec4}, where we complete the proof of
Theorem \ref{th_final}, frequently referring to \cite{lew_improved}.

\smallskip

\noindent 
We point out that our presentation is modular, namely we separate the
``stage'' estimates in Theorems \ref{thm_stage} and \ref{thm_stageCHI},
from the Nash-Kuiper iteration in Theorem \ref{th_old}. This latter construction
automatically yields a density result with the maximal H\"older regularity
exponent given in function of
the respective rates of second derivative blow-up and of the deficit
decrease in a stage. This way, a future improvement of the stage estimates will
directly yield the regularity improvement as well. This
important point was more convoluted in the presentation of \cite{CHI}.

\smallskip

\subsection{Notation.}
By $\mathbb{R}^{2\times 2}_{\sym}$ we denote the space of symmetric
$2\times 2$ matrices. The space of H\"older continuous vector fields
$\mathcal{C}^{m,\alpha}(\bar\omega,\R^k)$ consists of restrictions of
all $f\in \mathcal{C}^{m,\alpha}(\mathbb{R}^2,\R^k)$ to the closure of
an open, bounded domain $\omega\subset\R^2$. The
$\mathcal{C}^m(\bar\omega,\R^k)$ norm of such restriction is
denoted by $\|f\|_m$, while its H\"older norm in $\mathcal{C}^{m,
  \gamma}(\bar\omega,\R^k)$ is $\|f\|_{m,\gamma}$. 
By $C$ we denote a universal constant which may change from line to
line, but it depends only on the specified parameters.

\section{Preparatory statements}\label{sec_step}

In this section, we gather the regularization, decomposition and
perturbation statements that will be used in the course of the convex
integration constructions. The first lemma below consists of the basic convolution estimates and
the commutator estimate from \cite{CDS}:

\begin{lemma}\label{lem_stima}
Let $\phi\in\mathcal{C}_c^\infty(\R^d,\mathbb{R})$ be a standard
mollifier that is nonnegative, radially symmetric, supported on the
unit ball $B(0,1)\subset\R^d$ and such that $\int_{\mathbb{R}^d} \phi \dx = 1$. Denote: 
$$\phi_l (x) = \frac{1}{l^d}\phi(\frac{x}{l})\quad\mbox{ for all
}\; l\in (0,1], \;  x\in\R^d.$$
Then, for every $f,g\in\mathcal{C}^0(\mathbb{R}^d,\R)$ and every
$m,n\geq 0$ and $\beta\in (0,1]$ there holds:
\begin{align*}
& \|\nabla^{(m)}(f\ast\phi_l)\|_{0} \leq
\frac{C}{l^m}\|f\|_0,\tag*{(\theequation)$_1$}\vspace{1mm} \refstepcounter{equation} \label{stima1}\\
& \|f - f\ast\phi_l\|_0\leq C \min\big\{l^2\|\nabla^{2}f\|_0,
l\|\nabla f\|_0, {l^\beta}\|f\|_{0,\beta}\big\},\tag*{(\theequation)$_2$} \vspace{1mm} \label{stima2}\\
& \|\nabla^{(m)}\big((fg)\ast\phi_l - (f\ast\phi_l)
(g\ast\phi_l)\big)\|_0\leq {C}{l^{2- m}}\|\nabla f\|_{0}
\|\nabla g\|_{0}, \tag*{(\theequation)$_3$} \label{stima4}
\end{align*}
with a constant $C>0$ depending only on the differentiability exponent $m$.
\end{lemma}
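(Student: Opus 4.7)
The three estimates are classical and essentially independent, so I would treat them separately. The first two are direct, and only the third, the commutator estimate of \cite{CDS}, contains a genuine subtlety.

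For \ref{stima1}, I would move derivatives onto the mollifier via $\nabla^{(m)}(f\ast\phi_l) = f\ast\nabla^{(m)}\phi_l$, take the $L^\infty$ norm, and bound by $\|f\|_0\cdot\|\nabla^{(m)}\phi_l\|_{L^1}$. The scaling $\phi_l(x) = l^{-d}\phi(x/l)$ yields $\|\nabla^{(m)}\phi_l\|_{L^1} = l^{-m}\|\nabla^{(m)}\phi\|_{L^1}$, and this last factor is absorbed into the constant.

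For \ref{stima2}, I would write $(f - f\ast\phi_l)(x) = \int\bigl(f(x) - f(x-y)\bigr)\phi_l(y)\dd y$ and control the integrand on the support $\{|y|\leq l\}$ according to the regularity available. The bound $l^\beta \|f\|_{0,\beta}$ is immediate from $|f(x)-f(x-y)|\leq \|f\|_{0,\beta}|y|^\beta$, and $l\|\nabla f\|_0$ follows from the mean value theorem. For the sharper $l^2\|\nabla^2 f\|_0$ bound, I would Taylor-expand $f$ to second order; the linear term drops out upon integration because the radial symmetry of $\phi$ implies $\int y\,\phi_l(y)\dd y = 0$, leaving only the quadratic remainder to estimate.

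For \ref{stima4}, the starting point is the algebraic identity
\[
(fg)\ast\phi_l - (f\ast\phi_l)(g\ast\phi_l)
 = \int\bigl(f(x-y)-f(x)\bigr)\bigl(g(x-y)-g(x)\bigr)\phi_l(y)\dd y
  \,-\, (f - f\ast\phi_l)(g - g\ast\phi_l),
\]
obtained by inserting $f(x-y) = f(x) + \bigl(f(x-y)-f(x)\bigr)$ and the analogous expression for $g$ into $(fg)\ast\phi_l$ and collecting terms. When $m=0$, both terms on the right are controlled by $Cl^2\|\nabla f\|_0\|\nabla g\|_0$: the integral using $|f(x-y)-f(x)|\leq l\|\nabla f\|_0$ and the analogous bound for $g$, and the product term using \ref{stima2} applied separately to each factor. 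The main obstacle is the case $m\geq 1$: a naive differentiation puts $\nabla^{(m)}$ onto $\phi_l$ and loses the factor $l^{-m}$ with no compensating gain. As argued in \cite{CDS}, the remedy is to distribute each derivative so that it acts on a finite difference of $f$ or $g$, where the width-$l$ of the support cancels the $l^{-1}$ produced by differentiation and one retains $\|\nabla f\|_0$ or $\|\nabla g\|_0$ rather than a loss. Careful bookkeeping then yields exactly the scaling $l^{2-m}\|\nabla f\|_0\|\nabla g\|_0$.
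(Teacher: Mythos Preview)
The paper does not give its own proof of this lemma; it is stated as a recollection of ``the basic convolution estimates and the commutator estimate from \cite{CDS}'' and no argument is supplied. Your outline is correct and is exactly the standard route: \ref{stima1} by passing derivatives onto the kernel and using the $L^1$-scaling, \ref{stima2} by Taylor expansion with the first moment of $\phi_l$ vanishing by radial symmetry, and \ref{stima4} via the bilinear decomposition you wrote, with the $m\geq 1$ case handled as in \cite{CDS}. There is nothing to compare against in the paper itself.
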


\medskip

\noindent The next two auxiliary results are specific to dimension
$d=2$. They allow for the decomposition of the given defect into
a multiple of $\Id_2$ (thus two primitive defects of rank $1$) and a
symmetric gradient, in agreement with the local conformal invariance of
any Riemann metric:

\begin{lemma}\label{lem_diagonal}
Let $\omega\subset\R^2$ be an open, bounded and Lipschitz set. There exist maps: 
$$\bar\Psi: L^2(\omega,\R^{2\times 2}_\sym)\to H^{1}(\omega,\R^2), 
\qquad \bar a: L^2(\omega,\R^{2\times  2}_\sym)\to L^{2}(\omega,\R), $$ 
which are linear, continuous, and such that:
\begin{itemize}
\item[(i)] for all $D\in L^2(\omega,\R^{2\times 2}_\sym)$ there holds:
  $D= \bar a(D)\Id_2 + \sym\nabla \big(\bar\Psi(D)\big),$\vspace{1mm}
\item[(ii)] $\bar\Psi(\Id_2) \equiv 0$ and $\bar a(\Id_2) \equiv 1$ in
  $\omega$, \vspace{1mm}
\item[(iii)] for all $m\geq 0$ and $\gamma\in (0,1)$, if $\omega$ is
  $\mathcal{C}^{m+2,\gamma}$ regular then the maps $\bar\Psi$ and $\bar a$ are continuous from
  $\mathcal{C}^{m,\gamma}(\bar\omega,\R^{2\times 2}_\sym)$ to
  $\mathcal{C}^{m+1,\gamma}(\bar\omega, \R^2)$ and
  to $\mathcal{C}^{m,\gamma}(\bar\omega, \R)$, respectively, so that:
\begin{equation}\label{diag_bounds}
\|\bar\Psi (D)\|_{m+1,\gamma}\leq C \|D\|_{m,\gamma} \mbox{ and }
~ \|\bar a (D)\|_{m,\gamma}\leq C \|D\|_{m,\gamma} \quad \mbox{ for all
}\; D\in L^2(\omega,\R^{2\times 2}_\sym).
\end{equation}
\end{itemize} 
The constants $C$ above depend on $\omega$, $m, \gamma$ but not on
$D$. Also, there exists $l_0>0$ depending only on $\omega$, such that
(\ref{diag_bounds}) are uniform on 
the closed $l$-neighbourhoods $\{\bar\omega+ \bar B_l(0)\}_{l\in (0,l_0)}$ of $\omega$.
\end{lemma}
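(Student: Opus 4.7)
The plan is to define $\bar a(D)$ first via an elliptic equation that forces the residual $D - \bar a(D)\Id_2$ to be Saint-Venant compatible, and then recover $\bar\Psi(D)$ by inverting the symmetric gradient on this residual. The decomposition necessarily forces $\Delta \bar a = \curl\curl D$, since $\curl\curl(\sym\nabla\bar\Psi) = 0$ identically; hence the natural ansatz is $\bar a = \tfrac{1}{2}\tr D + c$, where $c\in L^2(\omega)$ is the unique weak solution of the Dirichlet problem
\begin{equation*}
\Delta c = \curl\curl D - \tfrac{1}{2}\Delta \tr D = -\tfrac{1}{2}(\partial_{11}-\partial_{22})(D_{11}-D_{22}) - 2\partial_{12}D_{12} \; \text{ in } \omega, \quad c|_{\partial\omega} = 0.
\end{equation*}
The right-hand side is a second-order distributional derivative of $L^2$-quantities, so solvability in $L^2$ follows by duality from the isomorphism $-\Delta: H^2(\omega)\cap H_0^1(\omega) \to L^2(\omega)$, while classical Schauder theory upgrades this to the $\mathcal{C}^{m,\gamma}$-estimate needed for (iii). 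For $D \equiv \Id_2$ one has $\curl\curl D = \Delta\tr D = 0$, forcing $c \equiv 0$ and $\bar a(\Id_2) \equiv 1$, which is the first half of (ii).

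Setting $B := D - \bar a(D)\Id_2 \in L^2(\omega,\R^{2\times 2}_\sym)$, one has $\curl\curl B = 0$ by construction. Since $\omega \subset \R^2$ is (taken to be) simply connected, the Saint-Venant compatibility theorem produces $\bar\Psi \in H^1(\omega,\R^2)$ with $\sym\nabla\bar\Psi = B$, unique modulo the three-dimensional kernel of infinitesimal rigid motions (two translations plus one skew rotation). I would fix this ambiguity by imposing the linear normalizations $\int_\omega \bar\Psi\,dx = 0$ and $\int_\omega(\partial_1\bar\Psi_2 - \partial_2\bar\Psi_1)\,dx = 0$, yielding a linear continuous map $D \mapsto \bar\Psi$. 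Property (i) is then immediate; for the second half of (ii), the case $D = \Id_2$ gives $B = 0$, so $\sym\nabla\bar\Psi = 0$, and the normalization forces $\bar\Psi \equiv 0$. The Hölder bound of (iii) follows by combining the Schauder estimate on $c$ with the $\mathcal{C}^{m+1,\gamma}$-continuity of the Saint-Venant inverse. Uniformity of the constants over the family $\{\bar\omega + \bar B_l(0)\}_{l\in(0,l_0)}$ is obtained by choosing $l_0$ small enough that these neighborhoods form a continuously varying family of $\mathcal{C}^{m+2,\gamma}$-regular sets with uniformly bounded geometric data, so that all elliptic constants remain controlled.

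The principal technical difficulty is the $\mathcal{C}^{m+1,\gamma}$-continuity of the Saint-Venant inverse $B \mapsto \bar\Psi$. The $L^2 \to H^1$ bound is classical via the Cesaro-Volterra line-integral formula, but the sharp Hölder estimate is cleanest to obtain by recasting $\bar\Psi$ as the weak solution of a Lamé-type elliptic boundary value problem with right-hand side proportional to $\mathrm{div}\, B$ and boundary data encoding the normalization, then invoking Schauder theory for elasticity systems. Ensuring uniformity of all constants across the continuous family of $l$-neighborhoods is a secondary but routine point, requiring explicit tracking of the dependence of the elliptic constants on the domain geometry.
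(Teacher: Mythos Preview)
Your route is genuinely different from the paper's, and the difference matters. The paper (citing \cite{CS}) constructs $\bar\Psi$ \emph{directly}: one solves the two Dirichlet Poisson problems $\psi_1=\Delta^{-1}(D_{11}-D_{22})$, $\psi_2=\Delta^{-1}(2D_{12})$ on $\omega$, sets
\[
\bar\Psi(D)=(\partial_1\psi_1+\partial_2\psi_2,\; \partial_1\psi_2-\partial_2\psi_1),
\]
and then reads off $\bar a(D)=D_{11}-\partial_1\bar\Psi^1(D)=D_{11}-\partial_{11}\psi_1-\partial_{12}\psi_2$. All the mapping properties, including the H\"older estimates \eqref{diag_bounds}, then follow immediately from standard Schauder theory for the scalar Dirichlet Laplacian. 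You instead reverse the order: determine $\bar a$ first through an auxiliary Poisson problem, and only afterwards invoke Saint--Venant compatibility to produce $\bar\Psi$.

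Your approach is conceptually sound but carries two costs that the paper's explicit formula avoids. First, the Saint--Venant step requires $\omega$ to be simply connected, an assumption you slipped in parenthetically but which is absent from the lemma as stated; the paper's explicit $\bar\Psi$ needs no such topological hypothesis. Second, to obtain the sharp $\mathcal{C}^{m,\gamma}\to\mathcal{C}^{m+1,\gamma}$ bound on $\bar\Psi$ you are forced into Schauder theory for a Lam\'e-type system with boundary conditions you have not specified, whereas the paper's construction gets the same bound for free from scalar Schauder estimates. A minor additional point: your Poisson problem for $c$ has right-hand side in $H^{-2}$ only, so $c\in L^2$ must be understood as a \emph{very weak} (transposed) solution rather than a weak one---this works, but the phrasing should be tightened.
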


\smallskip

\noindent The proof of the above \cite[Proposition 3.1]{CS} is direct,
namely one may assign:
$\bar a (D) = D_{11} -\partial_{11} \Delta^{-1}(D_{11}-D_{12})
-\partial_{12}\Delta^{-1}(2D_{12})$, where $\Delta^{-1}$ corresponds to solving the
Poisson problem on $\omega$ with the Dirichlet boundary
condition. Consequently, $\bar a$ does not commute with taking partial derivatives of the
input matrix field $D$. An improvement of this construction, which is
crucial for the proof in \cite{CHI}, is given in:

\begin{lemma}\label{lem_diagonal2}
Given a radius $R>0$ and an exponent $\gamma\in (0,1)$, define the linear
space $E$ consisting of $\mathcal{C}^{0,\gamma}$-regular, $\R^{2\times
  2}_\sym$-valued matrix fields $D$ on the ball $\bar B_R\subset \R^2$,
whose traceless part $\dot D = D-\frac{1}{2}(\mbox{trace}\; D) \Id_2$
is compactly supported in $B_R$. There exist linear maps $\bar\Psi$, $\bar a$ in:
\begin{equation*}
\begin{split}
& E = \big\{D\in \mathcal{C}^{0,\gamma}(\bar B_R,\R^{2\times 2}_\sym); ~ \dot{D}\in
\mathcal{C}^{0,\gamma}_c(B_R,\R^{2\times 2}_\sym)\big\}\\
& \bar\Psi: E \to \mathcal{C}^{1,\gamma}(\bar B_R,\R^2), \qquad \bar a: E\to \mathcal{C}^{0,\gamma}(\bar B_R),
\end{split}
\end{equation*} 
with the following properties:
\begin{itemize}
\item[(i)] for all $D\in E$ there holds: $D=\bar a(D)\Id_2 + \sym\nabla \big(\bar\Psi(D)\big),$
\vspace{1mm}
\item[(ii)] $\bar\Psi(\Id_2) \equiv 0$ and $\bar a(\Id_2) \equiv 1$ in $B_R$, \vspace{1mm}
\item[(iii)] $ \|\bar\Psi (D)\|_{1,\gamma}\leq C \|\dot
  D\|_{0,\gamma}$ and $\|\bar a (D)\|_{0,\gamma}\leq C
  \|D\|_{0,\gamma}$ with constants $C$ depending on $R, \gamma$, % but not on $D$,
\vspace{1mm}
\item[(iv)] for all $m\geq 1$, if $D\in E\cap \mathcal{C}^{m,\gamma}(\bar B_R,\R^{2\times 2}_\sym)$
  then $\bar\Psi(D)\in \mathcal{C}^{m+1,\gamma}(\bar B_R,\R^2)$ and 
$\bar a(D)\in \mathcal{C}^{m,\gamma}(\bar B_R)$, and we have:
\begin{equation*}%\label{diag_bounds3}
\partial_I\bar\Psi(D)=\bar\Psi(\partial_ID) , \quad \partial_I\bar a(D)=\bar a(\partial_ID)
\qquad \mbox{ for all } \;  |I|\leq m.
\end{equation*}
\item[(v)]  for all $m\geq 1$, if $D\in E\cap
  \mathcal{C}^{m,\gamma}(\bar B_R,\R^{2\times 2}_\sym)$ and if
  additionally $D_{22}=0$ in $\bar B_R$, then:
$$\|\partial_2^{(s)}\bar a(D)\|_{0,\gamma}\leq C \|\partial_2^{(s)}D\|_{0,\gamma}, \quad
\|\partial_1^{(t+1)}\partial_2^{(s)}\bar a(D)\|_{0,\gamma}
\leq C \|\partial_1^{(t)}\partial_2^{(s+1)}D\|_{0,\gamma},$$
for all $s,t\geq 0$ such that $s\leq m$, $t+s+1\leq m$, and with
$C$ depending only on $R,\gamma$.
\end{itemize} 
\end{lemma}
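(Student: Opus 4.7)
The plan is to implement the decomposition via the Cauchy transform acting on the complex-valued function that encodes the traceless part of $D$. Identifying $\R^{2\times 2}_\sym$ with $\R\oplus\mathbb{C}$ via $D\mapsto (\tfrac{1}{2}\mathrm{tr}\,D,\,\dot D_{11}+iD_{12})$, setting $\psi=\bar\Psi_1(D)+i\bar\Psi_2(D)$ and using $\partial_{\bar z}=\tfrac{1}{2}(\partial_1+i\partial_2)$, a direct computation shows that (i) is equivalent to the Cauchy--Riemann system
\[
\partial_{\bar z}\psi \;=\; \dot D_{11}+iD_{12}, \qquad 2\bar a(D)\;=\;\mathrm{tr}\,D \,-\, \mathrm{div}\,\bar\Psi(D).
\]
Since $\dot D$ is compactly supported in $B_R$, the right hand side of the first equation is too, and we take the canonical inverse of $\partial_{\bar z}$:
\[
\bar\Psi(D)(z)\;:=\;-\frac{1}{\pi}\int_{B_R}\frac{\dot D_{11}(\zeta)+iD_{12}(\zeta)}{\zeta-z}\,dA(\zeta),
\]
read as $\bar\Psi_1(D)+i\bar\Psi_2(D)$. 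We then define $\bar a(D):=\tfrac{1}{2}\mathrm{tr}\,D-\tfrac{1}{2}\mathrm{div}\,\bar\Psi(D)$, which makes (i) automatic, while property (ii) follows from $\dot \Id_2=0$.

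For (iii), the first derivatives of the Cauchy transform obey $\partial_{\bar z}\bar\Psi(D) = \dot D_{11}+iD_{12}$ and $\partial_{z}\bar\Psi(D)=S(\dot D_{11}+iD_{12})$, where $S$ is the Beurling--Ahlfors transform, a Calder\'on--Zygmund singular integral bounded on $\mathcal{C}^{0,\gamma}$. Together with the zeroth order bound coming from integration over the compact support, this yields $\|\bar\Psi(D)\|_{1,\gamma}\leq C\|\dot D\|_{0,\gamma}$, and then $\|\bar a(D)\|_{0,\gamma}\leq C\|D\|_{0,\gamma}$ is immediate. Property (iv) reduces to the commutativity of the Cauchy transform with differentiation on compactly supported inputs: since $\dot D\in \mathcal{C}^{m,\gamma}_c$, differentiation under the integral produces no boundary terms, giving $\partial_I\bar\Psi(D)=\bar\Psi(\partial_I D)$, whence $\partial_I\bar a(D)=\bar a(\partial_I D)$.

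The main obstacle is property (v), in which we must trade a $\partial_1$ derivative on $\bar a(D)$ for a $\partial_2$ derivative on $D$ under the structural assumption $D_{22}=0$. The first estimate is immediate from (iii) and (iv) applied to $\partial_2^{(s)}D$. For the second, reading off the $(2,2)$-entry of the decomposition (i) gives the key algebraic identity $\bar a(D)=-\partial_2\bar\Psi_2(D)$ when $D_{22}=0$; together with $2D_{12}=\partial_1\bar\Psi_2(D)+\partial_2\bar\Psi_1(D)$ this produces
\[
\partial_1\bar a(D)\;=\;-\partial_2\partial_1\bar\Psi_2(D)\;=\;-2\partial_2 D_{12}+\partial_2^{2}\bar\Psi_1(D),
\]
and the last term satisfies $\|\partial_2^{2}\bar\Psi_1(D)\|_{0,\gamma}=\|\partial_2\bar\Psi_1(\partial_2 D)\|_{0,\gamma}\leq C\|\partial_2 D\|_{0,\gamma}$ by (iv) and (iii). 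This handles $s=t=0$. The general case then follows by writing $\partial_1^{(t+1)}\partial_2^{(s)}\bar a(D)=\partial_1\bar a(\partial_1^{(t)}\partial_2^{(s)}D)$ via (iv) and noting that $\partial_1^{(t)}\partial_2^{(s)}D$ still belongs to $E$ and still has vanishing $(2,2)$-entry. I expect (v) to be the most delicate point, as it depends essentially on the commutativity in (iv), which is exactly what fails for the Dirichlet-based construction in Lemma \ref{lem_diagonal} and is the reason we must pass to the convolution-type inverse of $\partial_{\bar z}$ here.
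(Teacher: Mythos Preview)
Your construction is exactly the paper's, written in complex notation: the Cauchy transform of $\dot D_{11}+iD_{12}$ coincides componentwise with the paper's $\bar\Psi=(\partial_1\psi[D_{11}-D_{22}]+\partial_2\psi[2D_{12}],\,\partial_1\psi[2D_{12}]-\partial_2\psi[D_{11}-D_{22}])$ built from the Newtonian potential $\psi[f]=\Gamma\ast f$, and your Beurling--Ahlfors bound is the same Schauder estimate $\|\nabla\psi[f]\|_{1,\gamma}\leq C\|f\|_{0,\gamma}$ the paper invokes. For (v) the paper proceeds by expanding $\bar a(D)=-\partial_2\bar\Psi^2(D)$ directly in terms of $\psi$ and commuting derivatives inside, which is equivalent to your route via the off-diagonal identity $\partial_1\bar\Psi_2=2D_{12}-\partial_2\bar\Psi_1$; both reduce to the same final expression and the same estimate.
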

\begin{proof}
For $f\in \mathcal{C}_c(\R^2)$ we set $\psi[f]= \Gamma\ast
f\in \mathcal{C}^1(\R^2)$ with $\Gamma(x)=\frac{1}{2\pi} \log |x|$, namely:
$$\psi[f](x) =\frac{1}{2\pi} \int_{\R^2}\log |x-y| f(y)\;\mbox{d}y.$$
Recall \cite{GT} that the operator $\psi[\cdot]$ is well defined, linear and
satisfies for any $\gamma\in (0,1)$ and $m\geq 1$:
\begin{align*}
& \,\mbox{ if } f\in\mathcal{C}_c^{0,\gamma}(\R^2) \mbox{ then }
\psi[f]\in\mathcal{C}^{2,\gamma}(\R^2) \mbox{ and } \Delta\psi[f] = f
\mbox{ in } \R^2, \tag*{(\theequation)$_1$}\refstepcounter{equation} \label{pr1a}\vspace{1mm}\\
& \begin{array}{l} \mbox{if } f\in\mathcal{C}_c^{m,\gamma}(\R^2) \mbox{ then } 
\psi[f]\in\mathcal{C}^{m+2,\gamma}(\R^2) \vspace{0.5mm}\\ \qquad\qquad\qquad
\quad \mbox{and } \partial_I \psi[f] = \psi[\partial_If] \mbox{  for any multiindex } I  \mbox{
  with } |I|\leq m.\end{array}
\tag*{(\theequation)$_2$} \label{pr2a}
\end{align*}
Moreover, if $\mbox{supp}\;f\subset B_R$ then the following
holds with a constant $C$ depending only of $R, \gamma$:
\begin{align*}
\|\nabla\psi[f]\|_{\mathcal{C}^{1,\gamma}(\bar B_R)}\leq C \|f\|_{0,\gamma}
\tag*{(\theequation)$_3$} \label{pr3a}
\end{align*}

\noindent Given $D: B_R\to \R^{2\times 2}_\sym$ such that $\dot{D}\in
\mathcal{C}_c(B_R,\R^{2\times 2}_\sym)$, define the continuous vector field on $\R^2$:
$$\bar\Psi(D) =\bar\Psi(\dot D) =
\Big(\partial_1\psi[D_{11}-D_{22}]+\partial_2\psi[2D_{12}], ~\partial_1\psi[2D_{12}]
-\partial_2\psi[D_{11} - D_{22}]\Big).$$
If $\dot{D}\in \mathcal{C}_c^{0,\gamma}(B_R,\R^{2\times 2}_\sym)$ then \ref{pr1a} implies 
that $\bar\Psi(D)\in\mathcal{C}^{1,\gamma}(\R^2,\R^2)$, together with
$\partial_1\bar\Psi^1- \partial_2\bar\Psi^2=\Delta\psi[D_{11}-D_{22}]= D_{11}-D_{22}$
and $\partial_1\bar\Psi^2
+\partial_2\bar\Psi^1=\Delta\psi[2D_{12}]=2D_{12}$, so that:
$$D- \sym\nabla (\bar\Psi(D)) = \big(D_{11}- \partial_1\bar\Psi^1(D)\big)\Id_2 
= \big(D_{22}- \partial_2\bar\Psi^2(D)\big)\Id_2.$$
Consequently, there follows (i) and (ii) if we set, for all $D\in E$:
$$\bar a(D)= D_{11}- \partial_1\bar\Psi^1(D)=D_{22}- \partial_2\bar\Psi^2(D).$$
Clearly, (iii) follows from \ref{pr3a} while (iv) is a consequence of \ref{pr2a}.
For (v), the above definition of $\bar a$ and the assumption $D_{22}=0$, yield:
\begin{equation*}
\begin{split}
& \partial_2^{(s)}\bar a(D) = -\partial_2^{(s+1)}\bar\Psi^2(D) =
-\partial_{12}\psi[2\partial_2^{(s)}D_{12}] + \partial_{22}\psi[\partial_2^{(s)}(D_{11}-D_{22})],\\
& \partial_1^{(t+1)}\partial_2^{(s)}\bar a(D) =
-\partial_1^{(t+1)}\partial_2^{(s+1)}\bar\Psi^2(D) 
\\ & \qquad\qquad\qquad \; =
-\partial_{11}\psi[2 \partial_1^{(t)}\partial_2^{(s+1)}D_{12}] 
+ \partial_{12}\psi[\partial_1^{(t)}\partial_2^{(s+1)}(D_{11}-D_{22})].
\end{split}
\end{equation*}
This ends the proof in virtue of \ref{pr3a}.
\end{proof}

\bigskip

\noindent As the final preparatory result, we recall two different single
``step'' constructions from \cite{lew_conv}. The first one uses Nash's
spirals, necessitating two codimension directions in order to cancel each of
the non-zero entries of the given nonnegative defect in the diagonal form:

\begin{lemma}\label{lem_step}
Assume that $k\geq 4$. Let $v\in \mathcal{C}^2(\R^2, \R^{k})$, $w\in
\mathcal{C}^1(\R^2, \R^{2})$, $\lambda>0$ and $a\in \mathcal{C}^2(\R^2)$. 
Denote $\Gamma(t) = \sin t$, $\bar\Gamma(t) = \cos t$ and define:
\begin{equation}\label{defi_per}
\begin{split}
&\tilde v = v+ \frac{a(x)}{\lambda}
\Big(\Gamma(\lambda x_1)e_1 +\bar\Gamma(\lambda x_1)e_2
+ \Gamma(\lambda x_2)e_3 +\bar\Gamma(\lambda x_3)e_4\Big),  \\
& \tilde w = w -\frac{a(x)}{\lambda} 
\Big(\Gamma(\lambda x_1)\nabla v^1 + \bar\Gamma(\lambda x_1)\nabla v^2
+\Gamma(\lambda x_2)\nabla v^3 + \bar\Gamma(\lambda x_2)\nabla v^4\Big).
\end{split}
\end{equation}
Then, the following identity is valid on $\R^2$:
\begin{equation}\label{step_err}
\begin{split}
& \big(\frac{1}{2}(\nabla \tilde v)^T \nabla \tilde v + \sym\nabla \tilde w\big) - 
\big(\frac{1}{2}(\nabla v)^T \nabla v + \sym\nabla w\big) -
\frac{a(x)^2}{2}\mathrm{Id}_2
\\ & = -\frac{a}{\lambda} \Big(\Gamma(\lambda x_1)\nabla^2 v^1 + \bar\Gamma(\lambda x_1)\nabla^2 v^2
+\Gamma(\lambda x_2)\nabla^2 v^3 + \bar\Gamma(\lambda x_2)\nabla^2 v^4\Big) +
\frac{1}{\lambda^2} \nabla a\otimes \nabla a.
\end{split}
\end{equation}
\end{lemma}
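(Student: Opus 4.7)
The statement is an exact algebraic identity, so the plan is a direct computation, carried out in a way that isolates the cancellations built into the Nash-spiral ansatz.

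First I would introduce the compact notation $W = \Gamma(\lambda x_1)e_1 + \bar\Gamma(\lambda x_1)e_2 + \Gamma(\lambda x_2)e_3 + \bar\Gamma(\lambda x_2)e_4 \in\R^k$ and $X = \Gamma(\lambda x_1)\nabla v^1 + \bar\Gamma(\lambda x_1)\nabla v^2 + \Gamma(\lambda x_2)\nabla v^3 + \bar\Gamma(\lambda x_2)\nabla v^4 \in\R^2$, so that $\tilde v-v = \tfrac{a}{\lambda}W$ and $\tilde w-w = -\tfrac{a}{\lambda}X$. I then decompose the gradient of the perturbation as $\nabla\tilde v - \nabla v = P + Q$, where
\[
P = a\bigl[\bar\Gamma(\lambda x_1)e_1 - \Gamma(\lambda x_1)e_2\bigr]\otimes e_1 + a\bigl[\bar\Gamma(\lambda x_2)e_3 - \Gamma(\lambda x_2)e_4\bigr]\otimes e_2, \qquad Q = \tfrac{1}{\lambda}W\otimes\nabla a,
\]
so that $P$ is the principal (order $1$) oscillation coming from differentiating the trigonometric factors, while $Q$ carries $\nabla a$.

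Next I would expand
\[
\tfrac{1}{2}(\nabla\tilde v)^T\nabla\tilde v = \tfrac{1}{2}(\nabla v)^T\nabla v + \sym\bigl((\nabla v)^T P\bigr) + \sym\bigl((\nabla v)^T Q\bigr) + \tfrac{1}{2}P^TP + \sym(P^TQ) + \tfrac{1}{2}Q^TQ,
\]
and evaluate each piece using the two orthogonality structures present. The key identity $\sin^2+\cos^2=1$, together with the fact that the pairs $(e_1,e_2)$ and $(e_3,e_4)$ live in orthogonal codimensional directions, gives $P^TP = a^2\Id_2$, which cancels the $\tfrac{a^2}{2}\Id_2$ term on the left-hand side. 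The same within-pair orthogonality (now between $\sin t\cdot\cos t - \cos t\cdot\sin t$) yields $P^TQ=0$, while $|W|^2=2$ gives $\tfrac{1}{2}Q^TQ = \tfrac{1}{\lambda^2}\nabla a\otimes\nabla a$, precisely the residual term on the right-hand side.

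Then I would compute $\nabla\tilde w - \nabla w$ by differentiating $-\tfrac{a}{\lambda}X$, splitting the result into the principal (order $1$) contribution from the $\lambda$-derivatives of the trigonometric factors, plus an $\tfrac{a}{\lambda}\bigl[\Gamma(\lambda x_1)\nabla^2 v^1+\ldots\bigr]$ contribution, plus $-\tfrac{1}{\lambda}X\otimes\nabla a$. A direct inspection shows the principal contribution is exactly $-(\nabla v)^T P$, which kills the corresponding cross term $\sym((\nabla v)^TP)$ coming from $(\nabla\tilde v)^T\nabla\tilde v$. On the other hand, the identity $(\nabla v)^TW = X$ shows that $\sym((\nabla v)^T Q) = \tfrac{1}{\lambda}\sym(X\otimes\nabla a)$ cancels the residual $-\tfrac{1}{\lambda}\sym(X\otimes\nabla a)$ in $\sym\nabla(\tilde w-w)$. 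Collecting what survives leaves exactly $-\tfrac{a}{\lambda}\bigl[\Gamma(\lambda x_1)\nabla^2 v^1 + \bar\Gamma(\lambda x_1)\nabla^2 v^2 + \Gamma(\lambda x_2)\nabla^2 v^3 + \bar\Gamma(\lambda x_2)\nabla^2 v^4\bigr] + \tfrac{1}{\lambda^2}\nabla a\otimes\nabla a$, which is \eqref{step_err}.

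The computation is purely algebraic and has no analytic obstruction; the main care lies in bookkeeping. The only nontrivial ingredient is recognizing the three simultaneous cancellations: (a) the Pythagorean identity within each pair producing $a^2\Id_2$, (b) the designed choice of $\tilde w$ so that its principal oscillation cancels the cross term $\sym((\nabla v)^TP)$, and (c) the matching between the $\nabla a$-contribution in $(\nabla v)^TQ$ and the $-\tfrac{1}{\lambda}X\otimes\nabla a$ term in $\nabla\tilde w$. The use of four codimensions (hence $k\geq 4$) is essential precisely in step (a), as it requires two independent $(\sin,\cos)$ pairs residing in orthogonal planes of $\R^k$ to produce the two diagonal entries of $a^2\Id_2$.
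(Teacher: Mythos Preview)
Your proof is correct: the direct computation you outline, with the decomposition $\nabla\tilde v-\nabla v=P+Q$ and the three cancellations (a)--(c), establishes the identity exactly as stated. The paper itself does not give a proof of this lemma but merely recalls it from \cite{lew_conv}; your argument is the natural direct verification and matches what one would expect to find there.
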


\medskip

\noindent The second ``step'' construction uses Kuiper's corrugations,
in which a single codimension is used to cancel one rank-one defect of the form
$a(x)^2e_i\otimes e_i$:

\begin{lemma}\label{lem_step2}
Let $v\in \mathcal{C}^2(\R^2, \R^{k})$, $w\in \mathcal{C}^1(\R^2,
\R^{2})$, $\lambda>0$ and $a\in \mathcal{C}^2(\R^2)$ be given. Denote:
$\Gamma(t) = 2\sin t$, $\bar\Gamma(t) = -\frac{1}{2}\sin (2t),$ and
for a fixed  $i=1\ldots 2$ and $j=1\ldots k$ define:
\begin{equation}\label{defi_per2}
\tilde v = v + \frac{a(x)}{\lambda} \Gamma(\lambda x_i) e_j,\qquad 
\tilde w = w -\frac{a(x)}{\lambda} \Gamma(\lambda x_i)\nabla v^j
+ \frac{a(x)^2}{\lambda} \bar\Gamma(\lambda x_i)e_i.
\end{equation}
Then, the following identity is valid on $\R^2$:
\begin{equation}\label{step_err2}
\begin{split}
& \big(\frac{1}{2}(\nabla \tilde v)^T \nabla \tilde v + \sym\nabla \tilde w\big) - 
\big(\frac{1}{2}(\nabla v)^T \nabla v + \sym\nabla w\big) - a(x)^2e_i\otimes e_i
\\ & = -\frac{a}{\lambda} \Gamma(\lambda x_i)\nabla^2 v^j 
+ \frac{1}{2\lambda^2}\Gamma(\lambda x_i)^2\nabla a\otimes\nabla a
- \frac{1}{\lambda} \bar\Gamma(\lambda x_i)\, \sym\big(\nabla (a^2)\otimes e_i\big).
\end{split}
\end{equation}
\end{lemma}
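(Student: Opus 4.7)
The proof will be a direct algebraic verification in the spirit of Lemma \ref{lem_step}, with no analytic content beyond bookkeeping. The plan is to expand the left-hand side term by term and exhibit the cancellations that result from the particular choice $\Gamma(t)=2\sin t$, $\bar\Gamma(t)=-\frac{1}{2}\sin(2t)$. I would begin by writing $\nabla\tilde v = \nabla v + B$ with
\[
B = \frac{\Gamma(\lambda x_i)}{\lambda}\, e_j\otimes \nabla a + a\,\Gamma'(\lambda x_i)\, e_j\otimes e_i,
\]
so that $\tfrac{1}{2}(\nabla\tilde v)^T\nabla\tilde v - \tfrac{1}{2}(\nabla v)^T\nabla v = \sym\bigl((\nabla v)^T B\bigr) + \tfrac{1}{2} B^T B$. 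A direct computation, using $(\nabla v)^T e_j=\nabla v^j$ and $|e_j|=1$, gives
\[
\sym\bigl((\nabla v)^T B\bigr) = \tfrac{\Gamma}{\lambda}\sym(\nabla v^j\otimes \nabla a) + a\Gamma'\sym(\nabla v^j\otimes e_i),
\]
\[
\tfrac{1}{2}B^T B = \tfrac{\Gamma^2}{2\lambda^2}\nabla a\otimes\nabla a + \tfrac{a\Gamma\Gamma'}{\lambda}\sym(\nabla a\otimes e_i) + \tfrac{a^2(\Gamma')^2}{2}e_i\otimes e_i.
\]

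Next I differentiate $\tilde w-w$. The contribution from $-\frac{a}{\lambda}\Gamma(\lambda x_i)\nabla v^j$, after applying the product rule and symmetrizing (using that $\nabla^2 v^j$ is itself symmetric), yields
\[
-\tfrac{\Gamma}{\lambda}\sym(\nabla v^j\otimes\nabla a) - a\Gamma'\sym(\nabla v^j\otimes e_i) - \tfrac{a\Gamma}{\lambda}\nabla^2 v^j,
\]
whose first two summands cancel exactly the cross terms in $\sym((\nabla v)^T B)$, leaving only $-\tfrac{a\Gamma}{\lambda}\nabla^2 v^j$ which is the first term appearing on the right-hand side of \eqref{step_err2}. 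The contribution from $\frac{a^2}{\lambda}\bar\Gamma(\lambda x_i)e_i$ works out, via $\sym(e_i\otimes e_i)=e_i\otimes e_i$, to
\[
\tfrac{\bar\Gamma}{\lambda}\sym(\nabla(a^2)\otimes e_i) + a^2\bar\Gamma'\, e_i\otimes e_i.
\]

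It remains to match the remaining $e_i\otimes e_i$ and $\sym(\nabla a\otimes e_i)$ coefficients to the desired primitive defect $a^2 e_i\otimes e_i$. Here the trigonometric identities play the central role: with $t=\lambda x_i$ one has
\[
\tfrac{(\Gamma')^2}{2} + \bar\Gamma' = 2\cos^2 t - \cos(2t) = 1,
\qquad \Gamma\Gamma' + 2\bar\Gamma = 2\sin(2t) - \sin(2t) = \sin(2t) = -2\bar\Gamma,
\]
so the combined $e_i\otimes e_i$ coefficient is exactly $a^2$, and the $\sym(\nabla a\otimes e_i)$ coefficient, after using $\nabla(a^2)=2a\nabla a$, reduces to $-\tfrac{\bar\Gamma}{\lambda}\sym(\nabla(a^2)\otimes e_i)$, matching \eqref{step_err2} precisely.

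The computation has no conceptual obstacle; the only step requiring care is the algebra of the two trigonometric identities above, which is the signature feature of Kuiper's corrugation and the reason the auxiliary term $\frac{a^2}{\lambda}\bar\Gamma(\lambda x_i)e_i$ is introduced with the particular profile $\bar\Gamma=-\tfrac{1}{2}\sin(2t)$. Beyond this, one must merely be consistent with the convention $(\nabla v)_{lm}=\partial_m v^l$ so that $(\nabla v)^T\nabla v$ is $2\times 2$, and track which tensor products are symmetrized.
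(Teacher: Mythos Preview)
Your proof is correct: the direct algebraic expansion with the decomposition $\nabla\tilde v=\nabla v+B$, followed by the two trigonometric identities $(\Gamma')^2/2+\bar\Gamma'=1$ and $\Gamma\Gamma'+2\bar\Gamma=-2\bar\Gamma$, is exactly the intended computation. The paper itself does not supply a proof here but merely recalls the lemma from \cite{lew_conv}; your verification is the standard one and there is nothing to add.
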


\section{A proof of Theorem \ref{thm_stage}}\label{sec_stage}

In the proof below, the constants $C>1$ may change from line to line,
but they depend only on  $\omega$, $\gamma$,
$m$ and $r$ (and thus, ultimately, on $N$), unless specified otherwise.

\bigskip

\noindent {\bf Proof of Theorem \ref{thm_stage}}

\smallskip

{\bf 1. (Preparing the data)} Let $l_0$ be as in Lemma
\ref{lem_diagonal}, and $\phi_l$ as in Lemma \ref{lem_stima}. 
For $l\in (0,l_0]$ we define the following smoothed data functions on
the $l$-thickened set $\bar\omega+\bar B_l(0)$:
$$v_0=v\ast \phi_l,\quad w_0=w\ast \phi_l, \quad A_0=A\ast \phi_l,
\quad {\mathcal{D}}_0= A_0 - \big(\frac{1}{2}(\nabla v_0)^T\nabla v_0 + \sym\nabla w_0\big).$$
From Lemma \ref{lem_stima}, we deduce the initial bounds,
where constants $C$ depend only on $m$ and $\omega$:
\begin{align*}
& \|v_0-v\|_1 + \|w_0-w\|_1 \leq C lM,
\tag*{(\theequation)$_1$}\refstepcounter{equation} \label{pr_stima1}\\
& \|A_0-A\|_0 \leq Cl^\beta\|A\|_{0,\beta}, \tag*{(\theequation)$_2$} \label{pr_stima2}\\
& \|\nabla^{(m+1)}v_0\|_0 + \|\nabla^{(m+1)}w_0\|_0\leq
\frac{C}{l^m} lM\quad \mbox{ for all }\; m\geq 1, \tag*{(\theequation)$_3$} \label{pr_stima3}\\
& \|\nabla^{(m)} \mathcal{D}_0\|_0\leq
\frac{C}{l^m} \big(\|\mathcal{D}\|_0 + (lM)^2\big)\quad \mbox{ for
  all }\; m\geq 0. \tag*{(\theequation)$_4$}\label{pr_stima4} 
\end{align*}
Indeed,  \ref{pr_stima1}, \ref{pr_stima2} follow from \ref{stima2} and
in view of the lower bound
on $M$. Similarly, \ref{pr_stima3} follows by applying \ref{stima1} to
$\nabla^2v$ and $\nabla^2w$ with the differentiability exponent $m-1$.
Since:
$$\mathcal{D}_0 = \mathcal{D}\ast \phi_l - \frac{1}{2}\big((\nabla
v_0)^T\nabla v_0 - ((\nabla v)^T\nabla v)\ast\phi_l\big), $$ 
we get \ref{pr_stima4} by applying \ref{stima1} to $\mathcal{D}$, and
\ref{stima4} to $\nabla v$.

\bigskip

{\bf 2. (Induction definition: iterative decomposition of deficits)} 
Let the linear maps ${\bar a, \bar\Psi}$ be as in 
Lemma \ref{lem_diagonal}. Also, let $r_0>0$ be a constant depending on
$\omega$, $\gamma$, such that $\|\bar a(D)-1\|_0\leq \frac{1}{2}$
whenever $\|D-\mathrm{Id}_2\|_{0,\gamma}\leq r_0$. For $r=0\ldots N$ we iteratively define the 
perturbation amplitudes $a_r \in\mathcal{C}^\infty(\bar\omega+\bar
B_l(0),\R)$ and the correction fields $\Psi_r \in\mathcal{C}^\infty(\bar\omega+\bar
B_l(0),\R^2)$ by setting: 
\begin{equation*}
\begin{split}
& a_0=0, \qquad a_r= \Big(2\bar a\big(\tilde C\Id_2 + \mathcal{D}_0-\mathcal{E}_{r-1}\big)\Big)^{1/2},
\quad \Psi_r = \bar\Psi\big(\tilde C\Id_2 + \mathcal{D}_0-\mathcal{E}_{r-1}\big),
\\ &  \mbox{with } ~\tilde C = \frac{2}{r_0}\Big(\|\mathcal{D}_0\|_{0,\gamma} +
\frac{1}{l^\gamma}\big(\|\mathcal{D}\|_{0} +(lM)^2\big)\Big),
\end{split}
\end{equation*}
and with the error fields $\mathcal{E}_r \in\mathcal{C}^\infty(\bar\omega,\R^{2\times 2}_\sym)$ given
by the right hand side of (\ref{step_err}), namely:
\begin{equation*}
\begin{split}
& \mathcal{E}_{r}= - \frac{a_r}{\lambda} \Big(\Gamma(\lambda
x_1)\nabla^2 v^1 + \bar\Gamma(\lambda x_1)\nabla^2 v^2 +
\Gamma(\lambda x_2)\nabla^2 v^3 + \bar\Gamma(\lambda x_2)\nabla^2 v^4\Big) +
\frac{1}{\lambda^2} \nabla a_r\otimes \nabla a_r.
\end{split}
\end{equation*}
Our definition of $a_r$ is correctly posed if only $\|\mathcal{D}_0
-\mathcal{E}_{r-1}\|_{0,\gamma} \leq r_0\tilde C$.
To this end, we right away observe that $\|\mathcal{D}_0\|_{0,\gamma} \leq
r_0\tilde C/2$, while we will prove that the second condition in (\ref{Assu}) implies:
\begin{equation}\label{need_errK}
\|\mathcal{E}_r\|_{0,\gamma}\leq \frac{r_0\tilde C }{2} \quad \mbox{ for all } \; r=0\ldots N-1.
\end{equation}
Note that then automatically there holds for all $r=1\ldots N$:
\begin{equation}\label{low_bd_asK}
\begin{split}
& \tilde C\Id_2 + \mathcal{D}_0-\mathcal{E}_{r-1} =
\frac{1}{2}(a_r)^2\mathrm{Id}_2 +\sym\nabla\Psi_r
\\ & \mbox{and }\quad (a_r)^2\in [\tilde C, 3\tilde C]
\mbox{ in }\bar\omega +\bar B_l(0).
\end{split}
\end{equation}
For the future estimate of derivatives of $a_r$ of order $m\geq 1$, we
use Fa\'a di Bruno's formula, \ref{pr_stima4} and the bound
(\ref{diag_bounds}) in Lemma \ref{lem_diagonal} in:
\begin{equation}\label{asm}
\begin{split}
\|\nabla^{(m)}a_r\|_0 & \leq C \Big\|\sum_{p_1+2p_2+\ldots
  mp_m=m} a_r^{2(1/2-p_1-\ldots -p_m)}\prod_{t=1}^m \big|\nabla^{(t)}(a_r)^2\big|^{p_t}\Big\|_0\\
& \leq C \sum_{p_1+2p_2+\ldots mp_m=m}\frac{1}{\tilde C^{(p_1+\ldots+p_m)-1/2}}
\prod_{t=1}^m \big( \tilde C + \|\mathcal{D}_0\|_{t,\gamma} +
\|\mathcal{E}_{r-1}\|_{t,\gamma} \big)^{p_t} \\ & \leq {C}{\tilde
  C^{1/2}} \sum_{p_1+2p_2+\ldots mp_m=m}\prod_{t=1}^m
\Big(\frac{1}{l^t} +\frac{\|\mathcal{E}_{r-1}\|_{t,\gamma}}{\tilde C}\Big)^{p_t},
\end{split}
\end{equation}
Additionally, applying Fa\'a di Bruno's formula to the inverse rather
than square root, we get:
\begin{equation}\label{asm2}
\begin{split}
& \|\nabla^{(m)}\Big(\frac{1}{a_{r}+a_{r-1}}\Big)\|_0 \leq \frac{C}{\tilde
C^{1/2}} \sum_{p_1+2p_2+\ldots mp_m=m}\prod_{t=1}^m
\Big(\frac{\|\nabla^{(t)}(a^{r} + a^{r-1})\|_0}{\tilde
  C^{1/2}}\Big)^{p_t} 
\end{split}
\end{equation}
The formulas (\ref{asm}), (\ref{asm2}) hold for all
$r=1\ldots N$ with constants $C$ depending on $\omega$, $\gamma$, $m$.

\bigskip

{\bf 3. (Inductive estimates)} In steps 4-5 we will prove the
following estimates, valid for all $r=1\ldots N$,
with constants $C$ depending, in line with our convention,
only on $\omega$, $\gamma$, $m$, $r$:
\begin{align*}
&  \|a_r\|_0\leq C\tilde C^{1/2} \tag*{(\theequation)$_1$}\refstepcounter{equation} \label{Fbound1K}\medskip\\
& \|\nabla^{(m)} a_r\|_0\leq C
\frac{\lambda^{m}\lambda^{\gamma}}{\lambda l}\tilde C^{1/2}  \quad\mbox{ for all }\; m\geq 1,
\tag*{(\theequation)$_2$}\label{Fbound2K} \medskip\\
& \|\nabla^{(m)} \mathcal{E}_r\|_0\leq C
\frac{\lambda^{m}}{\lambda l}\tilde C
\; \quad \qquad\mbox{ for all }\; m\geq 0, \tag*{(\theequation)$_3$}\label{Fbound22K} \medskip\\
& \|\nabla^{(m)} \big(\mathcal{E}_r-\mathcal{E}_{r-1}\big)\|_0 \leq
C \frac{\lambda^{m}\lambda^{2(r-1)\gamma}}{(\lambda l)^r}\tilde C
\quad\mbox{ for all }\; m\geq 0. \tag*{(\theequation)$_4$}\label{Fbound3K}
\end{align*}
In general, $C\to\infty$ as $m\to \infty$ or $r\to \infty$, so it is crucial that eventually
only finitely many of bounds above are used.
We now check that \ref{Fbound1K}, \ref{Fbound2K} are already valid at 
$r=1$. Indeed, \ref{Fbound1K} is a consequence of 
(\ref{low_bd_asK}) since $\mathcal{E}_0=0$, whereas \ref{Fbound2K} follows from (\ref{asm}):
\begin{equation*}%\label{induC_0} 
\|\nabla^{(m)}a_1\|_0  \leq C \frac{\tilde C^{1/2}}{l^m} \leq
C \frac{\lambda^m}{\lambda l} \tilde C^{1/2}.
\end{equation*}
We now observe that \ref{Fbound22K} always follows from 
\ref{Fbound1K} and \ref{Fbound2K}, because:
\begin{equation*}
\begin{split}
& \|\nabla^{(m)}\mathcal{E}_r\|_0\leq
C\sum_{p+q+t=m}\lambda^{p-1}\|\nabla^{(q)} a_r\|_0\|\nabla^{(t+2)}v_0\|_0 + 
C\sum_{q+t=m}\lambda^{-2}\|\nabla^{(q+1)} a_r\|_0\|\nabla^{(t+1)}a_1\|_0 
\\ & \leq C\Big(\sum_{p+t=m}\frac{\lambda^{p-1}}{l^{t+1}}\tilde C^{1/2}(lM)
+ \sum_{p+q+t=m, q\neq 0} \frac{\lambda^{p+q-1}\lambda^\gamma}{(\lambda l) l^{t+1}}\tilde C^{1/2} (lM)
+ \sum_{q+t=m} \frac{\lambda^{q+t+2}\lambda^{2\gamma}}{\lambda^2(\lambda l)^2}\tilde C\Big)
\\ & \leq  C \lambda^{m}\Big(\sum_{p+t=m}\frac{1}{(\lambda l)^{t+1}}
+ \sum_{p+q+t=m, q\neq 0} \frac{\lambda^\gamma}{(\lambda l)^{t+2}}
+ \sum_{q+t=m} \frac{\lambda^{2\gamma}}{(\lambda l)^2}\Big) \tilde C 
\leq  C \frac{\lambda^{m}}{\lambda l}\tilde C,
\end{split}
\end{equation*}
if only ${\lambda^{2\gamma}}\leq {\lambda l}$. 
Strengthening this working assumption to:
\begin{equation}\label{mu}
\frac{\lambda l}{\lambda^{2\gamma}}\geq \frac{2C}{r_0},
\end{equation}
we additionally arrive at (\ref{need_errK}). Concluding and since \ref{Fbound22K},
\ref{Fbound3K} are equivalent at $r=1$, we note that we have
proven \ref{Fbound1K} - \ref{Fbound3K} and (\ref{need_errK})
at their lowest counter $r$ value.

\bigskip

{\bf 4. (Proof of the inductive estimates)} Assume
that the bounds \ref{Fbound1K}--\ref{Fbound3K} hold up to some 
$1\leq r\leq N-1$. We will prove their validity at
$r+1$. By (\ref{need_errK}) and (\ref{low_bd_asK}) we directly get
\ref{Fbound1K}, whereas (\ref{asm}) and \ref{Fbound22K} yield 
\ref{Fbound2K}, since for all $m\geq 1$:
\begin{equation*}
\begin{split}
& \|\nabla^{(m)}a_{r+1}\|_0  \leq C \tilde C^{1/2} \sum_{p_1+2p_2+\ldots mp_m=m}\prod_{t=1}^m
\Big(\frac{1}{l^t} + \frac{\lambda^t\lambda^\gamma}{\lambda l}\Big)^{p_t}
\\ & \leq C \tilde C^{1/2} \sum_{p_1+2p_2+\ldots mp_m=m}\prod_{t=1}^m
\Big(\frac{\lambda^t\lambda^\gamma}{\lambda l}\Big)^{p_t}
\\ & = C \tilde C^{1/2} \lambda^m \sum_{p_1+2p_2+\ldots mp_m=m}
\Big(\frac{\lambda^\gamma}{\lambda l}\Big)^{p_1+p_2+\ldots p_m}\leq 
C\frac{\lambda^{m}\lambda^\gamma}{\lambda l}\tilde C^{1/2}.
\end{split}
\end{equation*}
We have already justified \ref{Fbound22K} and
(\ref{need_errK}) in the previous step, so it remains to show
\ref{Fbound3K}. Towards this end, note first the rough bound below, in view of  (\ref{asm2}) and
\ref{Fbound1K}--\ref{Fbound3K}:
\begin{equation*}
\begin{split}
& \|\nabla^{(m)}\Big(\frac{1}{a_{r+1}+a_{r}}\Big)\|_0 \leq \frac{C}{\tilde
C^{1/2}} \sum_{p_1+2p_2+\ldots mp_m=m}\prod_{t=1}^m
\Big(\frac{\lambda^t\lambda^\gamma}{\lambda l}\Big)^{p_t} \leq \frac{C}{\tilde C^{1/2}} 
\lambda^m.
\end{split}
\end{equation*}
Further, since $(a_{r+1})^2-(a_r)^2 = -2\bar
a(\mathcal{E}_r-\mathcal{E}_{r-1})$, it follows that:
\begin{equation}\label{mu2}
\begin{split}
& \|\nabla^{(m)}\big(a_{r+1}-a_{r}\big)\|_0\leq C \sum_{q+t=m}
\|\nabla^{(q)}\big((a_{r+1})^2-(a_{r})^2\big)\|_0 
\|\nabla^{(t)}\Big(\frac{1}{a^{r+1}+a^{r}}\Big)\|_0 \\ & 
\leq \frac{C}{\tilde C^{1/2}} \sum_{q+t=m} \|\mathcal{E}_{r}-\mathcal{E}_{r-1} \|_{q,\gamma} \lambda^t 
\leq C \tilde C^{1/2} \lambda^m  \frac{\lambda^{(2r-1)\gamma}}{(\lambda l)^r}, 
\end{split}
\end{equation}
for all $m\geq 0$. Finally, writing:
\begin{equation*}
\begin{split}
\mathcal{E}_{r+1}-\mathcal{E}_r = & -\frac{a_{r+1}-a_{r}}{\lambda}
\Big(\Gamma(\lambda x_1)\nabla^2 v_0^1 + 
\bar\Gamma(\lambda x_1)\nabla^2 v_0^2 + \Gamma(\lambda x_2)\nabla^2 v_0^3 + 
\bar\Gamma(\lambda x_2)\nabla^2 v_0^4\Big)  \\ & +
\frac{1}{\lambda^2} \Big(\big(\nabla a_{r+1} - \nabla a_r\big)\otimes \nabla a_{r+1} + 
\nabla a_r \otimes \big(\nabla a_{r+1} - \nabla a_r\big)\Big),
\end{split}
\end{equation*}
we conclude, in virtue of (\ref{mu2}), \ref{Fbound2K} and \ref{pr_stima2}:
\begin{equation*}
\begin{split}
& \|\nabla^{(m)}(\mathcal{E}_{r+1}-\mathcal{E}_r)\|_0 \leq  C 
\sum_{p+q+t=m}\lambda^{p-1}
\|\nabla^{(q)}\big(a_{r+1}-a_{r}\big)\|_0 \|\nabla^{(t+2)}v_0\|_0
\\ & \qquad\qquad\qquad\qquad \quad
+ C \sum_{q+t=m} \lambda^{-2} \|\nabla^{(q+1)} (a_{r+1} - a_r)\|_0
\big(\|\nabla^{(t+1)} a_{r+1} \|_0+ \|\nabla^{(t+1)} a_{r} \|_0\big)
\\ & \leq C \tilde C^{1/2} \sum_{p+q+t=m}\lambda^{p+q-1}
\frac{\lambda^{(2r-1)\gamma}}{(\lambda l)^{r}} \frac{lM}{l^{t+1}}
+ C\tilde C\sum_{q+t=m} \lambda^{-2} \frac{\lambda^{q+1}\lambda^{(2r-1)\gamma}}{(\lambda l)^{r}}
\frac{\lambda^{t+1}\lambda^\gamma}{\lambda l} 
\\ & \leq C\tilde C\lambda^m \sum_{p+q+t=m}
\frac{\lambda^{(2r-1)\gamma}}{(\lambda l)^r (\lambda l)^{t+1}}
+ C \tilde C \lambda^m \frac{\lambda^{2r\gamma}}{(\lambda l)^{r+1}} 
\leq C \frac{\lambda^{m}}{(\lambda l)^{r+1}} \tilde C.
\end{split}
\end{equation*}
This ends the proof of \ref{Fbound3K} and thus of all the inductive
estimates, under (\ref{mu}).

\bigskip

{\bf 5. (End of proof)} 
Define $\tilde v\in\mathcal{C}^\infty(\bar\omega +\bar B_l(0),\R^k)$ and
$\tilde w\in\mathcal{C}^\infty(\bar\omega +\bar B_l(0), \R^2)$ according to the ``step''
construction in Lemma \ref{lem_step}, involving the periodic functions
$\Gamma, \bar\Gamma$:
\begin{equation*}%\label{vw_fin}
\begin{split}
& \tilde v = v_0 + 
\frac{a_N}{\lambda} \Big(\Gamma(\lambda x_1)e_1 
+ \bar\Gamma(\lambda x_1)e_2 + \Gamma(\lambda x_2)e_3 + \bar\Gamma(\lambda x_2)e_4\Big),\\
& \tilde w = w_{0} - \frac{a_N}{\lambda} 
\Big(\Gamma(\lambda x_1)\nabla v^1_0 
+ \bar\Gamma(\lambda x_1)\nabla v_0^2 + \Gamma(\lambda x_2)\nabla^2 v^3_0 
+ \bar\Gamma(\lambda x_2)\nabla v_0^4 \Big) + \Psi_N - \tilde C id_2.
\end{split}
\end{equation*}
We now show that \ref{Fbound1K} - \ref{Fbound3K} imply the type of bounds
claimed in the Theorem. Observe first that, in virtue of (\ref{mu}):
\begin{equation*}%\label{p_sti}
\begin{split}
& \|\Psi_N\|_1\leq C\|\tilde C\mbox{Id}_2 + \mathcal{D}_0 -
\mathcal{E}_{N-1}\|_{0,\gamma} \leq C\tilde C,
\\ &   \|\nabla^2\Psi_N\|_0\leq C\|\tilde C\mbox{Id}_2 + \mathcal{D}_0 -
\mathcal{E}_{N-1}\|_{1,\gamma} \leq C\Big(\tilde C+ \frac{\tilde C}{l} + 
\frac{\lambda \lambda^\gamma}{\lambda l}\tilde C\Big) \leq C\lambda \tilde C.
\end{split}
\end{equation*}
To prove \ref{Abound1}, we use the above, \ref{Fbound1K}, \ref{Fbound2K} 
and \ref{pr_stima1}, \ref{pr_stima3}:
\begin{equation*}
\begin{split}
& \|\tilde v - v\|_1\leq \|v_0-v\|_1 + C\Big(\|a_N\|_0+\frac{\|\nabla a_N\|_0}{\lambda}\Big)\leq
C \Big(lM +\tilde C^{1/2} + \frac{\lambda^\gamma}{\lambda l}\tilde C^{1/2}\big) \leq C \tilde C^{1/2} ,\\
& \|\tilde w - w\|_1\leq \|w_0-w\|_1 + 
C\Big(\tilde C + \|a_N\|_0\|\nabla v_0\|_0 + \frac{\|\nabla a_N\|_0\|\nabla
  v_0\|_0 + \|a_N\|_0\|\nabla^2 v_0\|_0}{\lambda} \Big) + \|\Psi_N\|_1\\
& \qquad \qquad \leq C \Big(lM+ \tilde C^{1/2} \|\nabla v_0\|_0+
\frac{lM}{\lambda l}\tilde C^{1/2} + \frac{\lambda^\gamma}{\lambda
  l}\tilde C^{1/2} \|\nabla v_0\|_0+ \tilde C \Big)\\
& \qquad \qquad \leq C \tilde C^{1/2}\big( 1+\|\nabla v_0\|_0 + lM+ \tilde C^{1/2}\big)
\leq C \tilde C^{1/2}\big( 1+ \tilde C^{1/2}+\|\nabla v\|_0\big).
\end{split}
\end{equation*}
Similarly, there follows \ref{Abound2}:
\begin{equation*}
\begin{split}
& \|\nabla^2\tilde v\|_0\leq \|\nabla^2 v_0\|_0 +
C\Big(\lambda\|a_N\|_0 + \|\nabla a_N\|_0 +
\frac{\|\nabla^2a_N\|_0}{\lambda}\Big) \\ & \qquad\quad \; \leq
C \Big( M+ \big(\frac{\lambda^\gamma}{l} + \lambda \big)\tilde
C^{1/2}\Big) \leq C\lambda \tilde C^{1/2},\\
&\|\nabla^2\tilde w\|_0\leq \|\nabla^2 w_0\|_0 +
C\Big(\lambda\|a_N\|_0\|\nabla v_0\|_0 + \big(\|\nabla a_N\|_0\|\nabla
  v_0\|_0 + \|a_N\|_0\|\nabla^2 v_0\|_0\big)
\\ & \qquad\qquad\qquad\qquad\qquad \; + \frac{\|\nabla^2a_N\|_0\|\nabla v_0\|_0 + \|\nabla
  a_N\|_0\|\nabla^2 v_0\|_0 + \|a_N\|_0\|\nabla^3 v_0\|_0  }{\lambda}\Big) + \|\nabla^2\Psi_N\|_0 \\ & 
\qquad\quad \; \, \leq C \Big( M+ 
\big(1+\lambda +\frac{1}{l}\big) \tilde C^{1/2}(\|\nabla v\|_0 + lM) + 
\tilde C^{1/2}M + \lambda \tilde C\Big)  
\\ & \qquad\quad \; \, \leq C\lambda \tilde C^{1/2} \big( 1+ \tilde C^{1/2}+\|\nabla v\|_0\big).
\end{split}
\end{equation*}
Finally, (\ref{step_err}) and (\ref{low_bd_asK}) yield \ref{Abound3},
in virtue of the decomposition:
$$\tilde{\mathcal{D}} = (A- A_0) + \mathcal{D}_0 - \Big(\frac{(a_N)^2}{2}
\mathrm{Id}_2 + \mathcal{E}_N +\sym\nabla \Psi_N- \tilde C\Id_2\Big) 
= (A- A_0) - (\mathcal{E}_N - \mathcal{E}_{N-1}),$$
and further, in view of \ref{pr_stima2}, \ref{Fbound3K}:
\begin{equation*}
\|\tilde{\mathcal{D}}\|_0  \leq  \|A-A_0\|_0 + \|\mathcal{E}_N -\mathcal{E}_{N-1}\|_0 \leq
C\Big(l^\beta\|A\|_{0,\beta} + C\frac{\lambda^{2(N-1)\gamma}}{(\lambda l)^N}\tilde C \Big).
\end{equation*}

\medskip

\noindent We now summarize the obtained bounds, under the assumption
$\frac{\lambda l}{\lambda^{2\gamma}} \geq C$, in the following form:
\begin{align*}
& \begin{array}{l}
\|\tilde v - v\|_1\leq C\lambda^{\gamma/2}\big(\|\mathcal{D}\|_0^{1/2} + lM\big), \vspace{1.5mm}\\
\|\tilde w -w\|_1\leq C\lambda^{\gamma}\big(\|\mathcal{D}\|_0^{1/2}
+ lM\big) \big(1+ \|\mathcal{D}\|_0^{1/2} +lM +\|\nabla v\|_0\big), 
\end{array}\vspace{3mm} \\
&  \begin{array}{l}
\|\nabla^2\tilde v\|_0\leq C \lambda
\lambda^{\gamma/2}\big(\|\mathcal{D}\|_0^{1/2} + lM\big), \vspace{1.5mm}\\
\|\nabla^2\tilde w\|_0\leq C \lambda
\lambda^{\gamma}\big(\|\mathcal{D}\|_0^{1/2} + lM\big)  \big(1+\|\mathcal{D}\|_0^{1/2} + lM
+ \|\nabla v\|_0\big), 
\end{array} \vspace{3mm} \\ 
& \begin{array}{l}
\|\tilde{\mathcal{D}}\|_0\leq C\big(l^\beta \|A\|_{0,\beta} +
\displaystyle{\frac{\lambda^{2N \gamma}}{(\lambda l)^N}}\big(
\|\mathcal{D}\|_0 +(lM)^2\big). 
\end{array} 
\end{align*}
The claimed \ref{Abound1} - \ref{Abound3} are obtained by rescaling
$2N\gamma$ to $\gamma$. The proof is done.
\endproof

\section{A proof of Theorem \ref{thm_stageCHI}}\label{sec_stage2}

In the proof below, the constants $C>1$ may change from line to line,
but they depend only on  $\omega$, $\gamma$, $k$,
$m$, $s$, $t$ and $r$ (and thus, ultimately, on $N$), unless specified otherwise.

\bigskip

\noindent {\bf Proof of Theorem \ref{thm_stageCHI}}

\smallskip

{\bf 1. (Preparing the data)} Fix $R, l_0>0$ so that $\bar\omega\subset
B_R$ and $2l_0<1<\mbox{dist} (\bar\omega, \partial B_R)$. Given $l, v, w, A$
as in the statement of the Theorem, we define the smoothed data on
$\bar\omega+\bar B_{2l-l/(2k)}$: 
$$v_0=v\ast \phi_{l/(2k)},\quad w_0=w\ast \phi_{l/(2k)}, \quad A_0=A\ast \phi_{l/(2k)},
\quad {\mathcal{D}}_0= A_0 - \big(\frac{1}{2}(\nabla v_0)^T\nabla v_0 + \sym\nabla w_0\big),$$
where we used the mollifier $\phi_{l/(2k)}$ as in Lemma \ref{lem_stima}.
Similarly to Step 1 of the proof of Theorem \ref{thm_stage}, there follow the
bounds \ref{pr_stima1} - \ref{pr_stima4} with constants $C$ depending
only on $\omega$, $k$ and $m$.

\medskip

\noindent We also set a cut-off function $\chi_0\in
\mathcal{C}^\infty_c(\omega+B_{2l-l/(2k)}, [0,1])$ with $\chi_0= 1$ on
$\bar\omega+\bar B_{2l-l/k}$, and for all $i=1\ldots (k-1)$ we set
the intermediate cut-off functions
$\chi_i\in \mathcal{C}^\infty_c(\omega+B_{2l-il/k}, [0,1])$ with $\chi_i= 1$ on
$\bar\omega+\bar B_{2l-(i+1)l/k}$.
When $l_0\ll 1$, it is pos\-sible to request that for any
$f\in\mathcal{C}^m(\bar\omega+\bar B_{2l-il/k})$ and any multiindex $I$
with $|I|\leq m$, there holds:
\begin{equation}\label{chi_st}
\|\partial_I(\chi_i f)\|_0\leq C \sum_{I_1+ I_2=I}
\|\partial_{I_1}\chi_i\|_0 \|\partial_{I_2}f\|_0\leq C\sum_{I_1+ I_2=I}
\frac{1}{l^{|I_1|}}\|\partial_{I_2}f\|_0,
\end{equation}
with constants $C$ depending only on $\omega$, $k$ and $m$.

\smallskip

\noindent In the course of the proof below, we will inductively construct the intermediate data:
\begin{equation*}
\begin{split}
& v_i \in\mathcal{C}^\infty(\bar\omega + \bar B_{2l-il/k}, \R^k), \quad 
w_i\in\mathcal{C}^\infty(\bar\omega + \bar B_{2l-il/k}, \R^2), \\
& \mathcal{D}_i = A_0 - \big(\frac{1}{2}(\nabla v_i)^T\nabla v_i +
\sym\nabla w_i\big)\in\mathcal{C}^\infty(\bar\omega + \bar B_{2l-il/k},
\R^{2\times 2}_{\sym}) \qquad \mbox{for all }\; i=1\ldots k,
\end{split}
\end{equation*}
%with the caveat that once $\mathcal{D}_i=0$, the procedure is stopped
%and we then declare $(\tilde v, \tilde w) =
%(v_i, w_i)$. 
and, eventually, set $(\tilde v, \tilde w) =
(v_k, w_k)$. All definitions will rely on the family of pairs of
frequencies $\{\lambda_i, \bar\lambda_i\}_{i=0}^k$ that will be
specified later, assumed to satisfy the monotonicity property:
\begin{equation}\label{freq}
\begin{split}
\lambda_0 = \bar\lambda_0 = \frac{1}{l},\quad \lambda=\lambda_1 \leq
\bar \lambda_1\ldots\leq \lambda_i\leq\bar\lambda_i \ldots
\leq\lambda_k\leq\bar\lambda_k,
\end{split}
\end{equation}
and on the positive constants (where we set $\tilde C_{-1} = 0$):
\begin{equation}\label{Cdef}
\tilde C_{i}  = \frac{2}{r_0}\|\chi_{i}
\mathcal{D}_{i}\|_{0,\gamma} + \bar \lambda_i^\gamma\frac{\big(
  \|\mathcal{D}\|_0+ (lM)^2\big)}{(\bar\lambda_i l)^2}+ 
\tilde C_{i-1}
\bar \lambda_i^\gamma\Big(\frac{\lambda_i^{(N-1)\gamma}}{(\lambda_i/\bar\lambda_{i-1})^{N}}
+ \frac{\lambda_i}{\bar\lambda_i} \Big) 
\quad\mbox{for }\; i=0\ldots k.
\end{equation}
The first term above is crucial, while the other two terms follow from
the technical considerations and are of at most the same order.
We will show that for all $i=1\ldots k$ there holds:
\begin{align*}
& \|v_i - v\|_1\leq C\tilde C_0^{1/2}, \qquad\qquad\quad
\|w_i -w\|_1\leq C\tilde C_0^{1/2} \big(1+\tilde C_0^{1/2} +\|\nabla v\|_0\big), 
%\qquad \qquad \qquad\mbox{ for all }\; s\geq 0,
\tag*{(\theequation)$_1$}\refstepcounter{equation} \label{INDU1}\medskip\\
& \|\nabla^2 v_i\|_0\leq C\sum_{j=0}^{i-1}\tilde C_j^{1/2}\bar\lambda_{j+1}, \quad \, \,
\|\nabla^2 w_i\|_0\leq C\sum_{j=0}^{i-1}\tilde
C_j^{1/2}\bar\lambda_{j+1}\big(1+\tilde C_0^{1/2} +\|\nabla v\|_0\big),
%\qquad \mbox{ for all }\; s,t\geq 0,
\tag*{(\theequation)$_2$}\label{INDU2} \medskip\\
&\|\partial_1^{(t)}\partial_2^{(s)} \mathcal{D}_i\|_0 \leq
C \frac{\tilde C_i}{\bar\lambda_i^\gamma}\lambda_i^{t}\bar\lambda_i^{s}
\qquad \mbox{ for all }\; s,t\geq 0. \tag*{(\theequation)$_3$}\label{INDU3}
\end{align*}
Observe that (\ref{chi_st}) and \ref{INDU3} result in:
\begin{equation}\label{pr_stima44}
\|\partial_1^{(t)}\partial_2^{(s)}  (\chi_i\mathcal{D}_i)\|_{0,\gamma}\leq
{C}\tilde C_i \lambda_i^{t}\bar\lambda_i^{s}
\qquad \mbox{ for all }\; s,t\geq 0.
\end{equation}
The bounds \ref{INDU3}, (\ref{pr_stima44})
are already satisfied at $i=0$ because of \ref{pr_stima1} -
\ref{pr_stima4}, and we have:
\begin{equation}\label{pr_now}
\tilde C_0\leq C\frac{1}{l^\gamma}\big(\|\mathcal{D}\|_0 + (lM)^2\big).
\end{equation}

\bigskip

{\bf 2. (Induction definition: iterative decomposition of deficits)} 
We now define the main quantities in the construction of $(v_i, w_i)$ from
$(v_{i-1},w_{i-1})$ for $i=1\ldots k$. 

\smallskip

\noindent Let the linear maps ${\bar a, \bar\Psi}$ be as in 
Lemma \ref{lem_diagonal2}. Also, let $r_0>0$ be a constant depending on
$\omega$, $\gamma$, such that $\|\bar a(D)-1\|_0\leq \frac{1}{2}$
whenever $\|D-\mathrm{Id}_2\|_{0,\gamma}\leq r_0$. For all $i=1\ldots k$
and $r=0\ldots N$, we define the perturbation amplitudes $a^i_r \in\mathcal{C}^\infty(\bar B_R)$ and
the corrections $\Psi^i_r \in\mathcal{C}^\infty(\bar B_R,\R^2)$ by:
\begin{equation*}
\begin{split}
a^{i+1}_0=0, \qquad & (a_r^{i+1})^2= \bar a\Big(\tilde C_i\Id_2 +
\chi_{i} \mathcal{D}_{i} - \chi_{i} \mathcal{E}^{i+1}_{r-1}\Big),
\\ & \Psi^{i+1}_r = \bar\Psi\Big(\tilde C_{i}\Id_2
+\chi_{i} \mathcal{D}_{i}- \chi_{i} \mathcal{E}^{i+1}_{r-1} \Big),
\quad \mbox{ for }\; i=0\ldots k-1, \; r=1\ldots N.
\end{split}
\end{equation*}
The error fields $\mathcal{E}^i_r \in\mathcal{C}^\infty(\bar\omega +
\bar B_{2l-l/(2k)},\R^{2\times 2}_\sym)$ above are given
by the right hand side of (\ref{step_err2}) (taken with $i,j=1$) 
after removing their $[\cdot]_{22}$ entries, namely:
\begin{equation*}
\begin{split}
\mathcal{E}^i_{r}= & - \frac{a_r^i}{\lambda_i} 
\Gamma(\lambda_i x_1)\big(\nabla^2 v_0^i - (\partial_{22}v_0^i )e_2^{\otimes 2}\big)
+\frac{1}{2\lambda_i^2}\Gamma(\lambda_i x_1)^2 \big(\nabla
a_r^i\otimes\nabla a_r^i - (\partial_2 a_r^i)^2 e_2^{\otimes 2}\big) \\ & -
\frac{1}{\lambda_i}\bar \Gamma(\lambda_i x_1) \,\sym\big(\nabla
(a_r^i)^2\otimes e_1\big) \quad\mbox{for }\; i=1\ldots k.
\end{split}
\end{equation*}
Note that $\tilde C_{i}\Id_2 + \chi_{i} \mathcal{D}_{i}- \chi_{i}
\mathcal{E}^{i+1}_{r-1}$ belongs to the space $E$ as in Lemma
\ref{lem_diagonal2}, and so our definition of $a_r^{i+1}$ is correctly posed if only 
$\|\chi_{i}\mathcal{D}_{i} - \chi_{i} \mathcal{E}_{r-1}^{i+1}\|_{0,\gamma} \leq r_0\tilde C_{i}$.
To this end, we right away observe that $\|\chi_{i} \mathcal{D}_{i}\|_{0,\gamma} \leq
r_0\tilde C_{i}/2$, while we will show that the second condition in (\ref{Assu}) implies:
\begin{equation}\label{need_err}
\|\chi_{i}\mathcal{E}_r^{i+1}\|_{0,\gamma}\leq
\frac{r_0\tilde C_{i}}{2} \quad \mbox{ for all } \; 0=1\ldots k-1, \; r=0\ldots N-1.
\end{equation}
Also, then automatically there holds, for all $i=0\ldots k-1$, $r=1\ldots N$:
\begin{equation}\label{low_bd_as}
\begin{split}
& \tilde C_{i}\Id_2 + \chi_{i} \mathcal{D}_{i}- \chi_{i} \mathcal{E}^{i+1}_{r-1}
=(a_r^{i+1})^2\,\mathrm{Id}_2 +\sym\nabla\Psi_r^{i+1}
\\ & \mbox{and }\quad (a_r^{i+1})^2\in \Big[\frac{\tilde C_{i}}{2}, \frac{3\tilde C_{i}}{2}\Big] \;\mbox{ in } \,\bar B_R.
\end{split}
\end{equation}

\smallskip

{\bf 3. (Inductive estimates)} Fix $i=0\ldots k-1$ and assume \ref{INDU3}. 
We right away note that at $i=0$, we indeed have \ref{INDU3} because of \ref{pr_stima4}.
In steps 4-6 below we will prove the
following estimates, valid for all $r=1\ldots N$,
with constants $C$ depending, in line with our convention,
only on $\omega$, $k$, $\gamma$, $t$, $s$, $r$:
\begin{align*}
& \; \, \|\partial_2^{(s)}(a_r^{i+1})^2\|_0\leq {C}\tilde C_i \bar \lambda_i^s\quad\mbox{and}\quad
\|\partial_2^{(s)}a_r^{i+1}\|_0\leq {C}\tilde C_i^{1/2} \bar \lambda_i^s
\qquad \qquad \mbox{ for all }\; s\geq 0,
\tag*{(\theequation)$_1$}\refstepcounter{equation} \label{Fbound1}\medskip\\
& {\begin{array}{l} \|\partial_1^{(t+1)}\partial_2^{(s)} (a_r^{i+1})^2\|_0\leq C
\tilde C_i \displaystyle{\frac{\lambda_{i+1}^{t+1}\bar
  \lambda_i^{s}}{(\lambda_{i+1}/\bar\lambda_i)} }\vspace{2mm} \\ 
\qquad\qquad\qquad \quad \mbox{ and}\quad
\|\partial_1^{(t+1)}\partial_2^{(s)} a_r^{i+1}\|_0\leq C
\tilde C_i^{1/2}\displaystyle{\frac{\lambda_{i+1}^{t+1}\bar
  \lambda_i^{s}}{(\lambda_{i+1}/\bar\lambda_i)} }\end{array}}
\qquad \mbox{ for all }\; s,t\geq 0,
\tag*{(\theequation)$_2$}\label{Fbound2} \medskip\\
&\;\, \|\partial_1^{(t)}\partial_2^{(s)} \big(\mathcal{E}_r^{i+1}-\mathcal{E}_{r-1}^{i+1}\big)\|_0 \leq
C \tilde C_i \lambda_{i+1}^{t}\bar \lambda_i^{s} 
\frac{\lambda_{i+1}^{(r-1)\gamma}}{(\lambda_{i+1}/\bar\lambda_i)^r}
\qquad\qquad\qquad \;\,
\mbox{ for all }\; s,t\geq 0. \tag*{(\theequation)$_3$}\label{Fbound3}
\end{align*}

\noindent We observe that, at any counter value $r\geq 1$, the latter estimates in both \ref{Fbound1} and
\ref{Fbound2} follow from the former ones, in view of 
(\ref{low_bd_as}). Indeed, this implication in \ref{Fbound1} is trivial at $s=0$, whereas for $s\geq
1$ we use the Fa\'a di Bruno formula:
\begin{equation*}
\begin{split}
\|\partial_2^{(s)}a_r^{i+1}\|_0 & \leq C\Big\|\sum_{p_1+2p_2+\ldots
  sp_s=s}(a_r^{i+1})^{2(1/2-p_1-\ldots -p_s)}\prod_{z=1}^s\big|\partial_2^{(z)}(a_r^{i+1})^2\big|^{p_z}\Big\|_0
\\ & \leq C\|a_r^{i+1}\|_0 \sum_{p_1+2p_2+\ldots
  sp_s=s}\prod_{z=1}^s\Big(\frac{\|\partial_2^{(z)}(a_r^{i+1})^2\|_0}{\tilde
  C_i}\Big)^{p_z}\leq C\tilde C_i^{1/2}\bar\lambda_i^s.
\end{split}
\end{equation*}
For \ref{Fbound2}, we apply the multivariate version of the Fa\'a di
Bruno formula. We note that the above estimate is just a particular case
of the more general formula below, but we first separated the more familiar
one-dimensional version for clarity.
Let $\Pi$ be the set of all partitions $\pi$ of the initial
multiindex $\{1\}^{t+1}+\{2\}^s$ into multiindices $I$ of lengths
$|I|\in [0, t+s+1]$ (some of them possibly empty). Denoting
by $|\pi|$ the number of multiindices in the given partition $\pi$, we have:
\begin{equation*}
\begin{split}
\|\partial_1^{(t+1)}\partial_2^{(s)}a_r^{i+1}\|_0 & \leq C\Big\|\sum_{\pi\in
  \Pi} (a_r^{i+1})^{2(1/2-|\pi|)}\prod_{I\in\pi}\partial_I(a_r^{i+1})^2\Big\|_0
 \leq C\|a_r^{i+1}\|_0 \sum_{\pi\in\Pi}\prod_{I\in\pi}\frac{\|\partial_I(a_r^{i+1})^2\|_0}{\tilde C_i}
\\ & \leq C \tilde C_i^{1/2}\lambda_{i+1}^{t+1}\bar\lambda_i^s\sum_{\pi\in\Pi} 
\Big(\prod_{I\in\pi,\;1\in I}\frac{\bar\lambda_i}{\lambda_{i+1}}\Big)
\leq C\tilde C_i^{1/2}\lambda_{i+1}^{t+1}\bar\lambda_i^s \frac{\bar\lambda_i}{\lambda_{i+1}}.
\end{split}
\end{equation*}
Additionally, applying Fa\'a di Bruno's formula to the
inverse rather than square root, we get:
\begin{equation}\label{asm2}
\begin{split}
&\|\partial_2^{(s)}\Big(\frac{1}{a_r^{i+1}+a_{r+1}^{i+1}}\Big)\|_0  \leq
 \frac{C}{\tilde C_i^{1/2}}\sum_{p_1+2p_2+\ldots
  sp_s=s}\prod_{z=1}^s\Big(\frac{\|\partial_2^{(z)}(a_r^{i+1}+a_{r+1}^{i+1})\|_0}{\tilde
  C_i^{1/2}}\Big)^{p_z}\leq \frac{C}{\tilde C_i^{1/2}}\bar\lambda_i^s, \\
&\|\partial_1^{(t+1)}\partial_2^{(s)}\Big(\frac{1}{a_r^{i+1}+a_{r+1}^{i+1}}\Big)\|_0
 \leq C \Big\|\sum_{\pi\in
  \Pi} \big(a_r^{i+1}+a_{r+1}^{i+1})^{-1-|\pi|}\prod_{I\in\pi}\partial_I(a_r^{i+1}+a_{r+1}^{i+1})\Big\|_0
\\ &  \qquad\qquad \qquad\qquad\qquad \qquad 
\leq \frac{C}{\tilde C_i^{1/2}}
 \sum_{\pi\in\Pi}\prod_{I\in\pi}\frac{\|\partial_I a_r^{i+1}\|_0 +\|\partial_Ia_{r+1}^{i+1}\|_0}{\tilde C_i^{1/2}}
\\ & \qquad\qquad \qquad\qquad\qquad \qquad 
\leq \frac{C}{\tilde C_i^{1/2}}\lambda_{i+1}^{t+1}\bar\lambda_i^s 
\sum_{\pi\in\Pi} \Big(\prod_{I\in\pi,\;1\in I}\frac{\bar\lambda_i}{\lambda_{i+1}} \Big) 
\leq \frac{C}{C_i^{1/2}}\lambda_{i+1}^{t+1}\bar\lambda_i^s \frac{\bar\lambda_i}{\lambda_{i+1}}.
\end{split}
\end{equation}

\bigskip

{\bf 4. (Induction base: $r=1$)} In this step, we check that \ref{Fbound1} -- \ref{Fbound3} are 
valid at the lowest counter value $r=1$. Indeed, for all $m\geq 0$
there holds, by Lemma \ref{lem_diagonal2} (iii) and (iv):
\begin{equation}\label{tamr}
\begin{split}
\|\nabla^{(m)} (a_1^{i+1})^2\|_0 & = \sum_{|I|=m} \big\|\bar
a\Big(\partial_I\big(\tilde C_i\mathrm{Id}_2 +\chi_i\mathcal{D}_i\big) 
\Big)\big\|_0\\ & \leq C \|\nabla^{(m)}\big(\tilde C_i\mathrm{Id}_2 
+ \chi_i \mathcal{D}_i \big)\|_{0,\gamma}
\leq C\Big(\tilde C_i + \|\nabla^{(m)}(\chi_i\mathcal{D}_i)\|_{0,\gamma}\Big)
\leq C\tilde C_i\bar\lambda_i^m,
\end{split}
\end{equation}
in view of (\ref{pr_stima44}) and since $\mathcal{E}_0^{i+1}=0$. This
yields, as explained in step 3, that
$\|\nabla^{(m)}a_i^{i+1}\|_0\leq\tilde C_i^{1/2}\bar\lambda_i^m$,
implying \ref{Fbound1} and \ref{Fbound2}, as  $\lambda_{i+1}\geq \bar\lambda_i$.
Further:
\begin{equation*}
\begin{split}
& \|\partial_1^{(t)}\partial_2^{(s)}\mathcal{E}_1^{i+1}\|_0\leq
C\sum_{\tiny\begin{array}{c} p_1+q_1+z_1=t\\ q_2+z_2=s\end{array}}
\lambda_{i+1}^{p_1-1}\|\nabla^{(q_1+q_2)}a_1^{i+1}\|_0\|\nabla^{(z_1+z_2+2)}v_0\|_0 
\\ & \quad + C \hspace{-4mm}\sum_{\tiny\begin{array}{c} p_1+q_1+z_1=t\\ q_2+z_2=s\end{array}}
\lambda_{i+1}^{p_1-2}\|\nabla^{(q_1+q_2+1)} a_1^{i+1}\|_0\|\nabla^{(z_1+z_2+1)}a_1^{i+1}\|_0 + 
C\sum_{p+q=t}\lambda_{i+1}^{p-1}\|\nabla^{(q+s+1)} (a_1^{i+1})^2\|_0
\\ & \leq C\sum_{\tiny\begin{array}{c} p_1+q_1+z_1=t\\  q_2+z_2=s\end{array}}
\Big(\frac{\lambda_{i+1}^{p_1-1}\bar\lambda_i^{q_1+q_2}}{l^{z_1+z_2+1}}\tilde C_i^{1/2}(lM)
+ \lambda_{i+1}^{p_1-2}\bar\lambda_i^{q_1+q_2+z_1+z_2+2}\tilde C_i\Big) +
C\sum_{p+q=t} \lambda_{i+1}^{p-1}\bar\lambda_i^{q+s+1}\tilde C_i
\\ & \leq  C \tilde C_i \lambda_{i+1}^t\bar\lambda_i^s
\sum_{p_1+q_1+z_1=t}\Big(\big(\frac{\bar\lambda_i}{\lambda_{i+1}}\big)^{q_1+z_1+1}
\frac{lM}{\tilde C_i(\bar\lambda_i l)^{z_1+z_2+1}}+
\big(\frac{\bar\lambda_i}{\lambda_{i+1}}\big)^{q_1+z_1+2}\Big)
\\ & \qquad + C\tilde C_i \lambda_{i+1}^t\bar\lambda_i^s
\sum_{p+q=t} \big(\frac{\bar\lambda_i}{\lambda_{i+1}}\big)^{q+1}
\leq C \tilde C_i \frac{\lambda_{i+1}^t\bar\lambda_i^s}{(\lambda_{i+1}/\bar\lambda_i)},
\end{split}
\end{equation*}
for all $s,t\geq 0$, because
$(lM)/(\bar\lambda_i l)\leq \tilde C_i^{1/2}$ by (\ref{Cdef}).
The above is precisely \ref{Fbound3} since $\mathcal{E}_0=0$.

\bigskip

{\bf 5. (Proof of the inductive estimates (\ref{need_err}), \ref{Fbound1} and \ref{Fbound2})} 
Assume that the bounds \ref{Fbound1} -- \ref{Fbound3} hold up to some 
$1\leq r\leq N-1$. We will prove their validity at
$r+1$. We start by noting a direct consequence of \ref{Fbound3} in
view of (\ref{chi_st}) and the interpolation inequality:
\begin{equation}\label{imtih1}
\|\partial_1^{(t)}\partial_2^{(s)}\big(\chi_i (\mathcal{E}_r^{i+1}- \mathcal{E}^{i+1}_{r-1})\big)\|_{0,\gamma}
\leq C\tilde C_i \lambda_{i+1}^t\bar\lambda_i^s
\frac{\lambda_{i+1}^{r\gamma}}{(\lambda_{i+1}/\bar\lambda_i)^r}
\qquad\mbox{ for all }\; s,t\geq 0.
\end{equation}
Further, since $\mathcal{E}_0=0$, it follows that:
\begin{equation}\label{imtih2}
\|\partial_1^{(t)}\partial_2^{(s)}(\chi_i \mathcal{E}_r^{i+1})\|_{0,\gamma}
\leq C\tilde C_i \lambda_{i+1}^t\bar\lambda_i^s
\sum_{j=1}^r \Big(\frac{\lambda_{i+1}^{\gamma}}{\lambda_{i+1}/\bar\lambda_i}\Big)^j
\leq C\tilde C_i \lambda_{i+1}^t\bar\lambda_i^s
\frac{\lambda_{i+1}^{\gamma}}{(\lambda_{i+1}/\bar\lambda_i)}.
\end{equation}
In particular, the requirement (\ref{need_err}) is automatically justified, because:
$$\|\chi_i\mathcal{E}_r^{i+1}\|_{0,\gamma}\leq C\tilde
C_i \frac{\lambda_{i+1}^{\gamma}}{(\lambda_{i+1}/\bar\lambda_i)}
\leq \frac{r_0 \tilde C_i}{2},$$
provided that the following working assumption holds:
\begin{equation}\label{mu2}
\frac{(\lambda_{i+1}/\bar\lambda_i)}{\lambda_{i+1}^{\gamma}}\geq
\frac{2C}{r_0} \quad\mbox{ for all }\; i=0\ldots k-1. 
\end{equation}

\smallskip

\noindent To prove \ref{Fbound1}, we use Lemma \ref{lem_diagonal2}
(iii), (iv) and argue as in (\ref{tamr}) in view of (\ref{imtih2}): %for all $s\geq 0$:
\begin{equation*}
\begin{split}
& \|\partial_2^{(s)}(a_{r+1}^{i+1})^2\|_0= \big\|\bar
a\Big(\partial_2^{(s)}\big(\tilde C_i\mathrm{Id}_2
+\chi_i\mathcal{D}_i-\chi_i\mathcal{E}_r ^{i+1} \big)\Big)\big\|_0 \\ & 
\leq C\Big(\tilde C_i + \|\partial_2^{(s)}(\chi_i\mathcal{D}_i)\|_{0,\gamma}
+ \|\partial_2^{(s)}(\chi_i\mathcal{E}_r ^{i+1})\|_{0,\gamma}\Big)
\leq C\tilde C_i\Big(1+ \bar\lambda_i^s+ 
\frac{\lambda_{i+1}^{\gamma}\bar\lambda_i^s}{(\lambda_{i+1}/\bar\lambda_i)}\Big)
\leq C\tilde C_i \bar\lambda_i^s.
\end{split}
\end{equation*}
Finally, \ref{Fbound2} follows by additionally invoking Lemma
\ref{lem_diagonal2} (v) in:
\begin{equation*}
\begin{split}
\|\partial_1^{(t+1)}\partial_2^{(s)}(a_{r+1}^{i+1})^2\|_0 & \leq \big\|\bar
a\Big(\partial_1^{(t+1)}\partial_2^{(s)}\big(\tilde C_i\mathrm{Id}_2
+\chi_i\mathcal{D}_i\big)\Big)\big\|_0
+ \big\|\partial_1^{(t+1)}\partial_2^{(s)} \Big(\bar
a\big(\chi_i\mathcal{E}_r ^{i+1} \big)\Big)\big\|_0\\ & 
\leq C\Big(\|\partial_1^{(t+1)}\partial_2^{(s)}\big(\chi_i\mathcal{D}_i\big)\|_{0,\gamma}
+ \|\partial_1^{(t)}\partial_2^{(s+1)}\big(\chi_i\mathcal{E}_r ^{i+1}\big)\|_{0,\gamma}\Big)
\\ & \leq C\tilde C_i\Big(\lambda_{i}^{t+1}\bar\lambda_i^s +
\lambda_{i+1}^t\bar\lambda_i^{s+1}
\frac{\lambda_{i+1}^{\gamma}}{(\lambda_{i+1}/\bar\lambda_i)}\Big)
\leq C\tilde C_i \lambda_{i+1}^t\bar\lambda_i^{s+1}.
\end{split}
\end{equation*}

%\bigskip

{\bf 6. (Proof of the inductive estimate \ref{Fbound3})} 
Noting that:
$$(a_{r+1}^{i+1})^2-(a_r^{i+1})^2 = 
-\bar a\big(\chi_i (\mathcal{E}_r^{i+1}-\mathcal{E}^{i+1}_{r-1})\big)$$
and recalling the inductive assumption \ref{Fbound3}, we get from Lemma
\ref{lem_diagonal2} (v) and (\ref{imtih1}):
\begin{equation}\label{uahid}
\begin{split}
&\|\partial_{2}^{(s)} \big((a_{r+1}^{i+1})^2-(a_r ^{i+1})^2)\|_0\leq
C\|\partial_{2}^{(s)} \big(\chi_i (\mathcal{E}_r^{i+1}-\mathcal{E}_{r-1}^{i+1}) \big)\|_{0,\gamma}
\leq C\tilde C_i \bar\lambda_i^{s}
\frac{\lambda_{i+1}^{r\gamma}}{(\lambda_{i+1}/\bar\lambda_i)^r},\\
&\|\partial_1^{(t+1)}\partial_{2}^{(s)} \big((a_{r+1}^{i+1})^2-(a_r^{i+1})^2)\|_0\leq
C\|\partial_1^{(t)}\partial_{2}^{(s+1)} \big(\chi_i (\mathcal{E}_r^{i+1}-\mathcal{E}_{r-1}^{i+1}) \big)\|_{0,\gamma}
\\ & \qquad \qquad\qquad\qquad\qquad \; \;
\leq C\tilde C_i \lambda_{i+1}^{t+1}\bar\lambda_i^{s}
\frac{\lambda_{i+1}^{r\gamma}}{(\lambda_{i+1}/\bar\lambda_i)^{r+1}},
\end{split}
\end{equation}
valid for all $s,t\geq 0$. We now combine the above with (\ref{asm2}) to get:
\begin{equation}\label{ithnain}
\begin{split}
&\|\partial_{2}^{(s)} \big(a_{r+1}^{i+1}-a_r ^{i+1})\|_0 
\leq C \sum_{p+q=s}\|\partial_{2}^{(p)} \big((a_{r+1}^{i+1})^2-(a_r^{i+1})^2)\|_0
\big\|\partial_{2}^{(q)}\Big(\frac{1}{a_{r+1}^{i+1}+a_r^{i+1}}\Big)\big\|_0 
\\ & \qquad \qquad\qquad \qquad\;\;\;
\leq C\tilde C_i^{1/2}\bar\lambda_i^{s}
\frac{\lambda_{i+1}^{r\gamma}}{(\lambda_{i+1}/\bar\lambda_i)^r}, 
\\ & \|\partial_1^{(t+1)}\partial_{2}^{(s)} \big(a_{r+1}^{i+1}-a_r^{i+1})\|_0 
\\ & \qquad \leq C \sum_{\tiny\begin{array}{c} p_1+q_1=t+1\\p_2+q_2=s\end{array}}
\|\partial_1^{(p_1)}\partial_{2}^{(p_2)} \big((a_{r+1}^{i+1})^2-(a_r^{i+1})^2)\|_0
\big\|\partial_1^{(q_1)}\partial_{2}^{(q_2)}\Big(\frac{1}{a_{r+1}^{i+1}+a_r^{i+1}}\Big)\big\|_0 
\\ & \qquad
\leq C\tilde C_i^{1/2} \lambda_{i+1}^{t+1}\bar\lambda_i^{s}
\frac{\lambda_{i+1}^{r\gamma}}{(\lambda_{i+1}/\bar\lambda_i)^{r+1}}
+ C\tilde C_i^{1/2}\hspace{-3mm}\sum_{\tiny\begin{array}{c}
    p_1+q_1=t+1\\p_2+q_2=s, q_1\geq 1\end{array}}
\lambda_{i+1}^{p_1+q_1}\bar\lambda_i^{p_2+q_2}
\frac{\lambda_{i+1}^{r\gamma}}{(\lambda_{i+1}/\bar\lambda_i)^{r+1}}
\\ & \qquad \leq C\tilde C_i^{1/2}\lambda_{i+1}^{t+1}\bar\lambda_i^{s}
\frac{\lambda_{i+1}^{r\gamma}}{(\lambda_{i+1}/\bar\lambda_i)^{r+1}}.
\end{split}
\end{equation}
Towards proving \ref{Fbound3}, we first write:
\begin{equation*}
\begin{split}
\mathcal{E}_{r+1}^{i+1}-\mathcal{E}_r^{i+1} = & -\frac{a_{r+1}^{i+1}-a_{r}^{i+1}}{\lambda_{i+1}}
\Gamma(\lambda_{i+1} x_1)\Big(\nabla^2 v_0^i
-(\partial_{22}v_0^i)e_2^{\otimes 2} \Big)
\\ & +  \frac{1}{2\lambda_{i+1}^2} \Gamma(\lambda_{i+1} x_1)^2
\Big(\big(\nabla a_{r+1}^{i+1} - \nabla a_r^{i+1}\big)\otimes \nabla a_{r+1}^{i+1} + 
\nabla a_r^{i+1} \otimes \big(\nabla a_{r+1}^{i+1} - \nabla a_r^{i+1}\big)
\\ & \qquad\qquad \qquad \qquad \qquad
- \big((\partial_2 a_{r+1}^{i+1})^2-(\partial_2 a_{r}^{i+1})^2\big)e_2^{\otimes 2}\Big)
\\ & - \frac{1}{\lambda_{i+1}}\bar\Gamma(\lambda_{i+1}
x_1)\,\sym\big(\nabla((a_{r+1}^{i+1})^2-(a_r^{i+1})^2)\otimes e_1\big),
\end{split}
\end{equation*}
with the goal of estimating, for all $s,t\geq 0$:
\begin{equation}\label{thalatha}
\begin{split}
\|\partial_1^{(t)}&\partial_2^{(s)}\big(\mathcal{E}_{r+1}^{i+1}-\mathcal{E}_r^{i+1}\big)\|_0 
\\  & \leq  C \sum_{\tiny\begin{array}{c} p_1+q_1+z_1=t\\q_2+z_2=s\end{array}}
\lambda_{i+1}^{p_1-1}\|\partial_1^{(q_1)}\partial_2^{(q_2)}(a_{r+1}^{i+1}-a_r^{i+1})\|_0
\big\|\nabla^{(z_1+z_2+2)}v_0\big\|_0 
\\ & \quad + C \sum_{\tiny\begin{array}{c} p_1+q_1+z_1=t\\q_2+z_2=s\end{array}}
\lambda_{i+1}^{p_1-2}\|\partial_1^{(q_1)}\partial_2^{(q_2)}\nabla(a_{r+1}^{i+1}-a_r^{i+1})\|_0 \times
\\ & \qquad\qquad\qquad \qquad\qquad
\times \Big(\|\partial_1^{(z_1)}\partial_2^{(z_2)}\nabla a_{r}^{i+1}\|_0+
\|\partial_1^{(z_1)}\partial_2^{(z_2)}\nabla a_{r+1}^{i+1}\|_0\Big)
\\ & \quad + C \sum_{p+q=t}
\lambda_{i+1}^{p-1}\|\partial_1^{(q)}\partial_2^{(s)}\nabla ((a_{r+1}^{i+1})^2-(a_r^{i+1})^2)\|_0. 
\end{split}
\end{equation}
The first term above is bounded, in virtue of (\ref{ithnain}) and \ref{pr_stima2}, by:
\begin{equation*}
\begin{split}
& C\tilde C_i^{1/2} (lM)\lambda_{i+1}^t\Big(\hspace{-2mm}
\sum_{\tiny\begin{array}{c} p_1+z_1=t\\q_2+z_2=s\end{array}}
\frac{\bar\lambda_i^{q_2}}{\lambda_{i+1}^{z_1+1}}\frac{\lambda_{i+1}^{r\gamma}}{(\lambda_{i+1}
/ \bar\lambda_i)^{r}}\frac{1}{l^{z_1+z_2+1}}
\\ & \qquad\qquad\qquad \qquad\qquad + \sum_{\tiny\begin{array}{c} p_1+q_1+z_1=t\\q_1\geq 1\\q_2+z_2=s\end{array}}
\frac{\bar\lambda_i^{q_2}}{\lambda_{i+1}^{q_1+z_1+1}}
\frac{\lambda_{i+1}^{q_1}\lambda_{i+1}^{r\gamma}}{(\lambda_{i+1} / \bar\lambda_i)^{r+1}}\frac{1}{l^{z_1+z_2+1}}\Big)
\\ & \leq C\tilde C_1^{1/2} (lM)\lambda_{i+1}^t\bar\lambda_i^s
\Big(\sum_{p_1+z_1=t}
\frac{1}{(\lambda_{i+1} l)^{z_1+1}}\frac{\lambda_{i+1}^{r\gamma}}{(\lambda_{i+1}/\bar\lambda_i)^{r}}
+ \sum_{p_1+q_1+z_1=t} \frac{1}{(\lambda_{i+1} l)^{z_1+1}}\frac{\lambda_{i+!}^{r\gamma}}{(\lambda_{i+1}
 /\bar\lambda_i)^{r+1}}\Big)
\\ & \leq C\tilde C_i \lambda_{i+1}^t\bar\lambda_i^s \frac{\lambda_{i+1}^{r\gamma}}{(\lambda_{i+1}
 /\bar\lambda_i)^{r+1}},
\end{split}
\end{equation*}
in virtue of (\ref{Cdef}). Similarly, the third term in (\ref{thalatha}) is estimated through (\ref{uahid}), by:
\begin{equation*}
C\tilde C_i \lambda_{i+1}^t\Big(\frac{\bar\lambda_{i}^{s+1}}{\lambda_{i+2}}
\frac{\lambda_{i+1}^{r\gamma}}{(\lambda_{i+1}/ \bar\lambda_i)^{r}}
+ \sum_{p+q=t,\;q\geq 1} 
\bar\lambda_i^s\frac{\lambda_{i+1}^{r\gamma}}{(\lambda_{i+1}/ \bar\lambda_i)^{r+1}}
\leq C\tilde C_i \lambda_{i+1}^t\bar\lambda_i^s
\frac{\lambda_{i+1}^{r\gamma}}{(\lambda_{i+1}/ \bar\lambda_i)^{r+i}}.
\end{equation*}
For the middle term in (\ref{thalatha}), we obtain the bound:
\begin{equation*}
\begin{split} 
& C\tilde C_i \lambda_{i+1}^t \Bigg(\sum_{q_2+z_2=s}\frac{1}{\lambda_{i+1}^{2}}
\frac{\bar\lambda_i^{q_2+1}\lambda_{i+1}^{r\gamma}}{(\lambda_{i+1}/ \bar\lambda_i)^{r}}\bar\lambda_i^{z_2+1}
+\sum_{\tiny\begin{array}{c} p_1+z_1=t\\z_1\geq 1\\ q_2+z_2=s\end{array}}
\frac{1}{\lambda_{i+1}^{z_1+2}}\frac{\bar\lambda_i^{q_2+1}\lambda_{i+1}^{r\gamma}}{(\lambda_{i+1}/
  \bar\lambda_i)^{r}}\lambda_{i+1}^{z_1}\bar\lambda_i^{z_2+1}
\\ &\qquad \qquad \quad + \sum_{\tiny\begin{array}{c} p_1+q_1=t\\q_1\geq 1\\q_2+z_2=s\end{array}}
\frac{1}{\lambda_{i+1}^{q_1+2}}\frac{\lambda_{i+1}^{q_1+1}\bar\lambda_i^{q_2}\lambda_{i+1}^{r\gamma}}{(\lambda_{i+1}/
  \bar\lambda_i)^{r+1}}\bar\lambda_i^{z+2+1} \\ &
\qquad \qquad \quad + \sum_{\tiny\begin{array}{c}
    p_1+q_1+z_1=t\\q_1\geq 1, z_1\geq 1\\q_2+z_2=s\end{array}}
\frac{1}{\lambda_{i+1}^{q_1+z_1+2}}
\frac{\lambda_{i+1}^{q_1+1}\bar\lambda_i^{q_2}\lambda_{i+1}^{r\gamma}}{(\lambda_{i+1}/\bar\lambda_i)^{r+1}}
\lambda_{i+1}^{z_1}\bar\lambda_i^{z_2+1}\Bigg)
\\ & \leq C\tilde C_i
\lambda_{i+1}^t\bar\lambda_i^{s}\frac{\lambda_{i+1}^{r\gamma}}{(\lambda_{i+1}/
  \bar\lambda_i)^{r+2}},
\end{split}
\end{equation*}
where we repeatedly used (\ref{ithnain}) and \ref{Fbound1}, \ref{Fbound2}.
In conclusion, (\ref{thalatha}) yields \ref{Fbound3K}:
$$\|\partial_1^{(t)}\partial_2^{(s)}\big(\mathcal{E}_{r+1}^{i+1}-\mathcal{E}_r^{i+1}\big)\|_0 
\leq C\tilde C_i
\lambda_{i+1}^t\bar\lambda_i^{s}\frac{\lambda_{i+1}^{r\gamma}}{(\lambda_{i+1}/
  \bar\lambda_i)^{r+1}}.$$
This ends the proof of all the inductive estimates, under the assumption (\ref{mu}).

\bigskip

{\bf 7. (Adding the first corrugation)} 
Define the perturbed fields $\bar v_i\in\mathcal{C}^\infty(\bar\omega +\bar B_{2l- il/k},\R^k)$ and
$\bar w_i\in\mathcal{C}^\infty(\bar\omega +\bar B_{2l- il/k}, \R^2)$
in accordance with Lemma \ref{lem_step2} (here, $i=1\ldots k$):
\begin{equation*}%\label{vw_fin}
\begin{split}
& \bar v_i = v_{i-1} + \frac{a_N^i}{\lambda_i} \Gamma(\lambda_i x_1)e_i,\quad 
\bar w_{i} = w_{i-1} - \frac{a_N^i}{\lambda_i} \Gamma(\lambda_i x_1)\nabla v^i_0 
+\frac{(a_N^i)^2}{\lambda_i}\bar\Gamma(\lambda_i x_1)e_1
+ \Psi_N^i - \tilde C_{i-1} id_2.
\end{split}
\end{equation*}
Observe that, recalling \ref{Fbound1}, \ref{Fbound2}, there holds for all $s,t\geq 0$:
\begin{equation}\label{jamea0}
\begin{split}
\|\partial_1^{(t)}\partial_2^{(s)}(\bar v_i^i - v_{i-1}^i)\|_0& \leq C\sum_{p+q=t}\lambda_i^{p-1}
\|\partial_1^{(q)}\partial_2^{(s)}a_N^i\|_0\\ & \leq C\tilde C_{i-1}^{1/2}\lambda_i^t\bar\lambda_{i-1}^s
\Big(\frac{1}{\lambda_i} + \sum_{p+q=t, ~q\geq
  1}\frac{\bar \lambda_{i-1}}{\lambda_i^2}\Big) 
\leq C\tilde C_i \lambda_i^t\bar\lambda_{i-1}^s\frac{1}{\lambda_i}.
\end{split}
\end{equation}
In particular, by \ref{pr_stima1}, \ref{pr_stima3}, the above yields (since $v_{i-1}^i=v_0^i$):
\begin{equation}\label{jamea1}
\begin{split}
& \|\bar v_i^i - v^i\|_1\leq \|v_0-v\|_1 + \|\bar v_i^i-v_{i-1}^i\|_1\leq
C(lM + C_{i-1}^{1/2}) \leq C \tilde C_0^{1/2} ,\\
& \|\partial_1^{(t)}\partial_2^{(s)}\nabla^2\bar v_i^i\|_0 
\leq \|\nabla^{(t+s+2)}v_0\|_0 + C\tilde C_i^{1/2}\lambda_i^{t+1}\bar\lambda_{i-1}^s
\leq C\tilde C_i^{1/2}\lambda_i^{t+1}\bar\lambda_{i-1}^s,
\end{split}
\end{equation}
by (\ref{pr_now}) so that $(lM)^2\leq C\tilde C_0$, and by
(\ref{Cdef}) which yields $lM\leq \tilde C_{i-1}^{1/2}(\lambda_il)$,
and as long as:
\begin{equation}\label{urid2}
\tilde C_{i-1}\leq C\tilde C_0.
\end{equation}
Now, by Lemma \ref{lem_diagonal2},
(\ref{pr_stima44}), (\ref{imtih2}) and (\ref{mu2}) we obtain:
\begin{equation*}%\label{p_sti}
\begin{split}
& \|\Psi_N^i\|_1\leq C\big(\|\chi_{i-1}\mathcal{D}_{i-1} \|_{0,\gamma}+
\|\chi_{i-1}\mathcal{E}^i_{N-1}\|_{0,\gamma}\big) \leq C\tilde C_{i-1},
\\ &   \|\nabla^2\Psi_N^i\|_0\leq C\big(\|\chi_{i-1}\mathcal{D}_{i-1} \|_{1,\gamma}+
\|\chi_{i-1}\mathcal{E}^i_{N-1}\|_{1,\gamma}\big) \leq C\Big(\tilde C_{i-1}\bar\lambda_{i-1} + 
\frac{\lambda_i \lambda^{\gamma}}{(\lambda_i/\bar\lambda_i)}\tilde C\Big) \leq C \tilde C_{i-1}\lambda_i,
\end{split}
\end{equation*}
and consequently, \ref{Fbound1}, \ref{Fbound2}  \ref{pr_stima1},
\ref{pr_stima3} result in the estimate:
\begin{equation}\label{jamea2}
\begin{split}
& \|\bar w_i - w_{i-1}\|_1\leq C\Big(\tilde C_{i-1} + \|a_N^i\|_0\|\nabla v_0\|_0 + \|(a_N^i)^2\|_0 
\\ & \qquad\qquad \qquad\qquad \qquad \; \, +\frac{\|\nabla a_N^i\|_0\|\nabla
  v_0\|_0 + \|a_N^i\|_0\|\nabla^2 v_0\|_0 + \|\nabla (a_N^i)^2\|_0}{\lambda_i} \Big) \\
& \qquad \qquad \quad \; \;  \leq C \Big(\tilde C_{i-1}+ \tilde C_{i-1}^{1/2}
\|\nabla v_0\|_0 + \frac{\tilde
  C_{i-1}^{1/2}}{\bar\lambda_{i-1}/ \lambda_i}\|\nabla v_0\|_0 +
\frac{lM}{\lambda_i l}\tilde C_{i-1}^{1/2} + \tilde C \Big)\\
& \qquad \qquad \quad \; \; \leq C \tilde C_{i-1}^{1/2}\big( \|\nabla v_0\|_0 + \tilde C_{i-1}^{1/2}\big)
\leq C \tilde C_{i-1}^{1/2}\big( lM+ \tilde C_{i-1}^{1/2}+\|\nabla v\|_0\big)
\\ & \qquad \qquad \quad \; \; \leq C \tilde C_0^{1/2}\big(\tilde C_0^{1/2}+\|\nabla v\|_0\big),\\
& \|\nabla^2 \bar  w_i- \nabla^2 \bar  w_{i-1}\|_0\leq \|\nabla^2\Psi_N\|_0 
+ C\Big(\lambda_i\big(\|a_N^i\|_0\|\nabla v_0\|_0+ \|(a_N^i)^2\|_0\big) 
\\ & \qquad\qquad\qquad 
+ \big(\|\nabla a_N^i\|_0\|\nabla v_0\|_0 + \|a_N^i\|_0\|\nabla^2 v_0\|_0 + \|\nabla (a_N^i)^2\|_0\big)
\\ & \qquad\qquad\qquad + \frac{\|\nabla^2a_N^i\|_0\|\nabla v_0\|_0 + \|\nabla
  a_N^i\|_0\|\nabla^2 v_0\|_0 + \|a_N^i\|_0\|\nabla^3 v_0\|_0
  +\|\nabla^2(a_N^i)^2\|_0 }{\lambda_i}\Big) \\ &  
\qquad\quad \; \; \; \leq C \Big(\lambda_i \tilde C_{i-1} +
\big(\lambda_i +\bar\lambda_{i-1}\big) \tilde C_{i-1}^{1/2}(\|\nabla v\|_0 + lM) + 
\tilde C_{i-1}^{1/2}\frac{lM}{l} \Big)  
\\ & \qquad\quad \; \; \;  \leq C\lambda_i \tilde C_{i-1}^{1/2} \big(lM+ \tilde C_{i-1}^{1/2}+\|\nabla v\|_0\big)
\leq C\tilde C_{i-1}\lambda_i\big(\tilde C_0^{1/2}+\|\nabla v\|_0\big).
\end{split}
\end{equation}
Finally, we note that (\ref{step_err2}) and (\ref{low_bd_as}) yield the
decomposition on $\bar\omega + \bar B_{2l-il/k}$:
\begin{equation}\label{jamea3}
\begin{split}
\bar {\mathcal{D}}_i & \doteq A_0 - \big(\frac{1}{2}(\nabla \bar  v_i)^T\nabla \bar  v_i +
\sym\nabla \bar  w_i\big) \\ &  = \mathcal{D}_{i-1} - \Big((a_N^i)^2
e_1\otimes e_1+ \mathcal{E}_N^i  +\sym\nabla \Psi_N^i- \tilde C_{i-1}\Id_2\Big)
\\ & \quad - \Big(-\frac{a^i_N}{\lambda_i} \Gamma(\lambda_i x_1)\partial_{22}v_0^i
+ \frac{1}{2\lambda_i^2}\Gamma(\lambda_i x_1)^2 (\partial_2a_N^i)^2\Big) e_2\otimes e_2
\\ & = - \big(\mathcal{E}_N^i- \mathcal{E}_{N-1}^i  \big)
+ \Big((a_N^i)^2 +\frac{a^i_N}{\lambda_i} \Gamma(\lambda_i x_1)\partial_{22}v_0^i
- \frac{1}{2\lambda_i^2}\Gamma(\lambda_i x_1)^2
(\partial_2a^i_N)^2\Big) e_2{\otimes} e_2.
\end{split}
\end{equation}

\medskip

{\bf 8. (Adding the second corrugation)} We now update of
$\bar v_i, \bar w_i$, to new fields $v_i\in\mathcal{C}^\infty(\bar\omega +\bar B_{2l-il/k},\R^k)$,
$ w_i\in\mathcal{C}^\infty(\bar\omega +\bar B_{2l-il/k}, \R^2)$
by using Lemma \ref{lem_step2} (with $i=2, j=1$) and the perturbation 
amplitude dictated by (\ref{jamea3}), namely:
\begin{equation}\label{vw_fin}
\begin{split}
& v_i = \bar v_i + \frac{b^i(x)}{\bar\lambda_i} \Gamma(\bar\lambda_i x_2)e_i,\quad 
w_1 = \bar w_{1} - \frac{b^i(x)}{\bar\lambda_i} \Gamma(\bar\lambda_i x_2)\nabla \bar  v^i_i 
+\frac{b^i(x)^2}{\bar\lambda_i}\bar\Gamma(\bar\lambda_i x_2)e_2,
\\ & \mbox{where }\; 
(b^i)^2 = (a_N^i)^2 +\frac{a_N^i}{\lambda_i} \Gamma(\lambda_i x_1)\partial_{22}v_0^i
- \frac{1}{2\lambda_i^2}\Gamma(\lambda_i x_1)^2 (\partial_2a_N^i)^2.
\end{split}
\end{equation}
Firstly, we argue that $b\in\mathcal{C}^\infty(\bar\omega + \bar B_{2l-il/k})$ is well
defined, since by \ref{pr_stima3}, \ref{Fbound1}, \ref{Fbound2}:
$$\big\|\frac{a_N^i}{\lambda_i} \Gamma(\lambda x_1)\partial_{22}v_0^i
- \frac{1}{2\lambda_i^2}\Gamma(\lambda_i x_1)^2 (\partial_2a_N^i)^2 \big\|_0\leq 
C\Big(\tilde C_{i-1}^{1/2}\frac{(lM)}{\lambda_i l} + \tilde C_{i-1}\Big(\frac{\bar\lambda_{i-1}}{\lambda_i}\Big)^2
 \Big)\leq C\tilde C_{i-1} \frac{\bar\lambda_{i-1}}{\lambda_i}.$$
Assigning $\bar\lambda_{i-1}/\lambda_i$ small as guaranteed by (\ref{mu2}), the bound in
(\ref{low_bd_as}) implies:
\begin{equation}\label{b0}
(b^i)^2\in \Big[\frac{\tilde C_{i-1}}{3}, 2\tilde C_{i-1}\Big] \quad \mbox{ in } \; \bar\omega + \bar B_{2l-il/k}.
\end{equation}
Likewise, for every $t,s\geq 0$ we obtain the first set of bounds in:
\begin{equation}\label{b1}
\begin{split}
& \|\partial_2^{(s)}(b^i)^2\|_0 \leq C\tilde C_{i-1}\bar\lambda_{i-1}^s
\qquad \|\partial_1^{(t+1)}\partial_2^{(s)}(b^i)^2\|_0 
\leq C\tilde C_{i-1}\lambda_i^t\bar\lambda_{i-1}^{s+1}, \\
& \|\partial_2^{(s)}b^i\|_0 \leq C\tilde C_{i-1}^{1/2}\bar\lambda_{i-1}^s,
\qquad \;\; \; \|\partial_1^{(t+1)}\partial_2^{(s)}b^i\|_0 
\leq C\tilde C_{i-1}^{1/2}\lambda_i^t\bar\lambda_{i-1}^{s+1},
\end{split}
\end{equation}
while the second set of bounds follows by the Fa\'a di Bruno formula as in
step 4. Further:
\begin{equation}\label{jamea01}
\begin{split}
& \|\partial_2^{(s)}(v_i^i - \bar v_i^i)\|_0 \leq C\sum_{p+q=s}\bar\lambda_i^{p-1}
\|\partial_2^{(q)}b^i\|_0\leq C\tilde C_{i-1}^{1/2}\frac{\bar\lambda_i^s}{\bar\lambda_i},\\ 
& \|\partial_1^{(t+1)}\partial_2^{(s)}(v_i^i - \bar v_i^i)\|_0 \leq C\sum_{p+q=s}\bar\lambda_i^{p-1}
\|\partial_1^{(t+1)}\partial_2^{(q)}b^i\|_0\\ & \qquad \qquad\qquad 
\qquad\quad \, \leq C\tilde C_{i-1}^{1/2} \bar\lambda_i^s\sum_{p+q=s}
\frac{\lambda_i^{t}\bar\lambda_{i-i}^{q+1}}{\lambda_i^{q+1}}\leq C \tilde C_{i-1}^{1/2}
\lambda_i^t\lambda_i^s\frac{\bar\lambda_{i-1}}{\bar\lambda_i},
\end{split}
\end{equation}
as in (\ref{jamea0}), which combined with (\ref{jamea1}) implies that:
\begin{equation}\label{jamea11}
\begin{split}
& \|v_i^i - v^i\|_1\leq \|\bar v_i^i-v^i\|_1 + C\tilde C^{1/2}_{i-1} \leq C \tilde C_0^{1/2} ,\\
& \|\nabla^2v_i^i\|_0  \leq C\tilde C_{i-1}^{1/2}\Big(\lambda_i + \bar\lambda_i +  
\bar\lambda_{i-1}\Big) \leq C\tilde C_{i-1}^{1/2}\bar\lambda_i,
\end{split}
\end{equation}
and further:
\begin{equation}\label{jamea1.5}
\begin{split}
& \|w_i - w_{i-1}\|_1\leq \|\bar w_i-w_{i-1}\|_1 
\\ & \qquad\qquad 
+ C\Big(\|b^i\|_0\|\nabla \bar v_i^i\|_0 + \|(b^i)^2\|_0 + \frac{\|\nabla b^i\|_0\|\nabla
  \bar v_i^i\|_0 + \|b^i\|_0\|\nabla^2 \bar v_i^i \|_0+ \|\nabla (b^i)^2\|_0}{\bar\lambda_i} \Big) \\
& \leq C \tilde C_{i-1}^{1/2}\big(\tilde C_0^{1/2 }+ \tilde
C_{i-1}^{1/2}+\|\nabla v\|_0\big)\leq C \tilde C_{i-1}^{1/2}\big(\tilde C_0^{1/2 }+\|\nabla v\|_0\big) , \\
& \|\nabla^2 w_i-\nabla^2 w_{i-1} \|_0\leq \|\nabla^2 \bar w_i - \nabla^2 w_{i-1} \|_0 
+ C\Big(\bar\lambda_i\big(\|b^i\|_0\|\nabla \bar v_i^i\|_0+ \|(b^i)^2\|_0\big) 
\\ & \qquad\qquad
+ \|\nabla b^i\|_0\|\nabla \bar v_i^i\|_0 + \|b^i\|_0\|\nabla^2 \bar v_i^i\|_0 + \|\nabla (b^i)^2\|_0
\\ & \qquad\qquad
+ \frac{\|\nabla^2b^i\|_0\|\nabla \bar v_i^i\|_0 + \|\nabla
  b^i\|_0\|\nabla^2 \bar v_i^i\|_0 + \|b^i\|_0\|\nabla^3\bar v_i^i\|_0
  +\|\nabla^2(b^i)^2\|_0 }{\bar\lambda_i}\Big) 
\\ & \leq C \tilde C_{i-1}^{1/2} \bar\lambda_i
\big(\tilde C_0^{1/2} + \tilde C_{i-1}^{1/2} + \|\nabla v\|_0\big)\leq 
C \tilde C_{i-1}^{1/2} \bar\lambda_i \big(\tilde C_0^{1/2} +\|\nabla v\|_0\big),
\end{split}
\end{equation}
where we used $\|\nabla^3\bar v_i^i\|_0\leq C\tilde C_{i-1}^{1/2}\lambda_i^2$.

\smallskip

\noindent Finally, Lemma \ref{lem_step2} implies:
\begin{equation}\label{kalb}
\begin{split}
&\big(\frac{1}{2}(\nabla v_i)^T\nabla v_i +
\sym\nabla w_i\big) - \big(\frac{1}{2}(\nabla \bar v_i)^T\nabla \bar v_i +
\sym\nabla \bar w_i\big) = (b^i)^2e_2\otimes e_2 +\mathcal{F}^i\\ &
\mbox{with }\; \mathcal{F}^i = 
-\frac{b^i}{\bar\lambda_i} \Gamma(\bar\lambda_i x_2)\nabla^2\bar v_i^i
+ \frac{1}{2\bar\lambda_i^2}\Gamma(\bar\lambda_i x_2)^2 \nabla b^i\otimes \nabla b^i -
\frac{1}{\bar\lambda_i} \bar\Gamma(\bar\lambda_i x_2)\,\sym\big(\nabla (b^i)^2\otimes e_2\big),
\end{split}
\end{equation}
where the bounds (\ref{b1}), (\ref{jamea1}) yield for all $s,t\geq 0$:
\begin{equation*}
\begin{split}
\|\partial_2^{(s)}\mathcal{F}^i\|_0& \leq C\sum_{p+q+z=s}\bar\lambda_i^{p-1}\|\partial_2^{(q)}b^i\|_0
\|\partial_2^{(z)}\nabla^2\bar v_i^i\| \\ & \quad +
C\sum_{p+q+z=s}\bar\lambda_i^{p-2}\|\partial_2^{(q)}\nabla b^i\|_0
\|\partial_2^{(z)}\nabla b^i\|_0+  C\sum_{p+q=s}\bar\lambda_i^{p-1}\|\partial_2^{(q)}\nabla (b^i)^2\|_0 \\ &
\leq  C\tilde C_{i-1}\bar\lambda_i^s\Big(
\sum_{p+q+z=s}\frac{\bar\lambda_{i-1}^q\lambda_i\bar\lambda_{i-1}^z}{\bar\lambda_i^{q+z+1}}
+ \sum_{p+q+z=s}\frac{\bar\lambda_{i-1}^{q+1}\bar\lambda_{i-1}^{z+1}}{\bar\lambda_i^{q+z+2}} + 
\sum_{p+q=s}\frac{\bar\lambda_{i-1}^{q+1}}{\bar\lambda_i^{q+1}}
\Big)\leq C \tilde C_{i-1}\bar\lambda_i^s\frac{\lambda_i}{\bar\lambda_i},
\end{split}
\end{equation*}
\begin{equation*}
\begin{split}
& \|\partial_i^{(t+1)}\partial_2^{(s)}\mathcal{F}^i\|_0 \leq 
C\sum_{\tiny\begin{array}{c} p_1+q_1+z_1=s\\  q_2+z_2=t+1\end{array}}\bar\lambda_i^{p_i-1}
\|\partial_i^{(q_2)}\partial_2^{(q_1)}b ^i\|_0
\|\partial_1^{(z_2)}\partial_2^{(z_1)}\nabla^2\bar v_i^i\| 
\\ & \qquad \qquad\qquad \qquad + C\sum_{\tiny\begin{array}{c} p_1+q_1+z_1=s\\  q_2+z_2=t+1\end{array}}
\bar\lambda_i^{p_1-2}\|\partial_1^{(q_2)}\partial_2^{(q_1)}\nabla b^i\|_0 
\|\partial_1^{(z_2)}\partial_2^{(z_1)}\nabla b^i\|_0
\\ & \qquad \qquad\qquad \qquad
+ C\sum_{p+q=s}\bar\lambda_i^{p-1}\|\partial_1^{(t+1)}\partial_2^{(q)}\nabla (b^i)^2\|_0 
\\ & \leq  C\tilde C_{i-1}\bar\lambda_i^s\Big(\sum_{p_1+q_1+z_1=s} 
\frac{\bar\lambda_{i-1}^{q_1}\lambda_i^{t+2}\bar\lambda_{i-1}^{z_1}}{\bar\lambda_{i}^{q_1+z_1+1}}
+ \sum_{\tiny\begin{array}{c} p_1+q_1+z_1=s\\ q_2+z_2=t+1, ~ q_2\geq 1\end{array}}
\frac{\bar\lambda_{i-1}^{q_1+1}\lambda_i^{q_2-1}}{\bar\lambda_i^{q_1+z_1+1}}\lambda_i^{z_2+1}\bar\lambda_{i-1}^{z_1}
\\ & \qquad\qquad \qquad  + \sum_{p_1+q_1+z_1=s}\frac{\bar\lambda_{i-1}^{q_1+1}}{\bar\lambda_i^{q_1+z_1+2}}
\lambda_i^{t+1}\bar\lambda_{i-1}^{z_1+1}
+ \sum_{\tiny\begin{array}{c} p_1+q_1+z_1=s\\ q_2+z_2=t+1, ~ q_2,  z_2\geq 1\end{array}}
\frac{\lambda_i^{q_2}\bar\lambda_{i-1}^{z_2}\lambda_{i-1}^{z_1+1}\bar\lambda_{i-1}^{q_1+1}}
{\bar\lambda_i^{q_1+z_1+2}}
\\ & \qquad\qquad \qquad   
+ \sum_{p+q=s}\frac{\lambda_i^{t+1}\bar\lambda_{i-1}^{q+1}}{\bar\lambda_i^{q+1}} \Big)
\leq C \tilde C_{i-1}\bar\lambda_i^{t+1}\lambda_i^s\frac{\lambda_i}{\bar\lambda_i}.
\end{split}
\end{equation*}
Since the new defect can be written in virtue of (\ref{jamea3}) and (\ref{kalb}) as:
\begin{equation*}
\begin{split}
\mathcal{D}_i & = \bar{\mathcal{D}}_i - \Big(\big(\frac{1}{2}(\nabla v_i)^T\nabla v_i +
\sym\nabla w_i\big) - \big(\frac{1}{2}(\nabla \bar v_i)^T\nabla
\bar v_i + \sym\nabla \bar w_1\big)\Big) 
\\ & = - (\mathcal{E}^i_{N}-\mathcal{E}^i_{N-1}) - \mathcal{F}^i,
\end{split}
\end{equation*}
then \ref{Fbound3} together with the above developed bounds on the derivatives of $\mathcal{F}$ imply:
\begin{equation}\label{jamea4}
\begin{split}
\|\partial_{1}^{(t)}\partial_2^{(s)}\mathcal{D}_i \|_0 & \leq 
C\tilde C_{i-1}\lambda_i^t\bar\lambda_{i-1}^s \frac{ \lambda_i^{(N-1)\gamma}}{(\lambda_i/\bar\lambda_{i-1})^{N}}
+ C\tilde C_{i-1}\lambda_i^t\bar\lambda_{i}^s
\frac{\lambda_i}{\bar\lambda_i} \\ & \leq 
C\tilde C_{i-1}\lambda_i^t\bar\lambda_{i}^s 
\Big(\frac{\lambda_i^{(N-1)\gamma}}{(\lambda_i/\bar\lambda_{i-1})^{N}} + \frac{\lambda_i}{\bar\lambda_i} \Big)
\quad \mbox{ for all } \; t,s\geq 0.
\end{split}
\end{equation}

\medskip

{\bf 9. (Conclusion of the proof)} From (\ref{jamea4}) we see that
\ref{INDU3} is satisfied, together with:
\begin{equation}\label{jamea5}
\tilde C_i\leq C\tilde C_{i-1}
\bar \lambda_i^\gamma\Big(\frac{\lambda_i^{(N-1)\gamma}}{(\lambda_i/\bar\lambda_{i-1})^{N}}
+ \frac{\lambda_i}{\bar\lambda_i} \Big) + \bar
\lambda_i^\gamma\frac{\big(\|\mathcal{D}\|_0+
  (lM)^2\big)}{(\bar\lambda_i l)^2}\quad\mbox{ for all }\; i=1\ldots k.
\end{equation}
Also, from (\ref{jamea11}), (\ref{jamea1.5}) we get \ref{INDU1},
\ref{INDU2}, provided that (\ref{mu2}), (\ref{urid2}) hold, namely:
\begin{equation}\label{Urid}
\tilde C_{i}\leq C\tilde C_0 \quad \mbox{and}\quad
\frac{(\lambda_{i}/\bar\lambda_{i-1})}{\lambda_{i}^{\gamma}}\geq \frac{2C}{r_0}
\quad\mbox{ for all } i=1\ldots k.
\end{equation}
We now make a scaling assumption, towards obtaining the final
form of the constants $\tilde C_i$. Set:
\begin{equation}\label{freq1}
\frac{\bar\lambda_i}{\lambda_i} =
\Big(\frac{\lambda_i}{\bar\lambda_{i-1}}\Big)^N\quad\mbox{ for all } i=1\ldots k,
\end{equation}
which equivalently reads: $\lambda_i^{N+1} = \bar\lambda_i
\bar\lambda_{i-1}^N$ and further: $\big({\lambda_i}/{\bar\lambda_{i-1}}\big)^N =
\big({\bar\lambda_i}/{\bar\lambda_{i-1}}\big)^{N/(N+1)}.$
We also assume the following condition:
\begin{equation}\label{Urid1}
\frac{(\bar\lambda_{i}/\bar\lambda_{i-1})}{\bar\lambda_i^{\gamma N
    (N+1)}}\geq \Big(\frac{2C}{r_0}\Big)^{N+1} 
\quad\mbox{ for all } i=1\ldots k.
\end{equation}
Observe that the above implies the second condition in (\ref{Urid}) because:
$$\frac{\lambda_{i}/\bar\lambda_{i-1}}{\lambda_{i}^{\gamma}}\geq 
\frac{\lambda_{i}/\bar\lambda_{i-1}}{\bar\lambda_{i}^{\gamma}}
= \Big(\frac{\bar\lambda_{i}/\bar\lambda_{i-1}}{\bar\lambda_{i}^{(N+1)\gamma}}\Big)^{1/(N+1)}
\geq \frac{2C}{r_0},$$
whereas the first condition in (\ref{Urid}) also then holds, as
(\ref{jamea5}) becomes in view of (\ref{Urid1}):
\begin{equation*}
\begin{split}
\tilde C_i & \leq C\tilde
C_{i-1}\frac{\bar\lambda_i^{N\gamma}}{(\lambda_i/\bar\lambda_{i-1})^N}
+ \bar\lambda_i^\gamma\frac{\big(\|\mathcal{D}\|_0+
  (lM)^2\big)}{(\bar\lambda_il)^2} \\ & \leq C \frac{\tilde
  C_{i-1}}{(\bar\lambda_i/\bar\lambda_{i-1})^{(N-1)/(N+1)}} + 
\bar\lambda_i^\gamma\frac{\big(\|\mathcal{D}\|_0+ (lM)^2\big)}{(\bar\lambda_il)^2},
\end{split}
\end{equation*}
so that a straightforward induction argument shows that:
\begin{equation}\label{jamea6}
\tilde C_i \leq C \frac{\tilde C_{0}}{(\bar\lambda_i/\bar\lambda_{0})^{(N-1)/(N+1)}}= 
C \frac{\tilde C_{0}}{(\bar\lambda_il)^{(N-1)/(N+1)}}
\quad\mbox{ for all } i=1\ldots k.
\end{equation}
Also, we note that the monotonicity of the sequence:
\begin{equation}\label{freq2}
\frac{1}{l}=\bar\lambda_0 \leq \frac{(\lambda l)^{N+1}}{l} =
\bar\lambda_1\leq \bar\lambda_2\ldots \leq\bar\lambda_i\ldots \leq\bar\lambda_k,
\end{equation}
implies the monotonicity properties in (\ref{freq}), because then  $\lambda_i
= (\bar\lambda_i\bar\lambda_{i-1}^N)^{1/(N+1)}\geq \bar\lambda_{i-1}$ in view of
(\ref{freq1}) and thus also $\bar\lambda_i/\lambda_i \geq 1$, for all $i=1\ldots k$.

\medskip

\noindent Finally, we assign the progression of frequences in
(\ref{freq2}) motivated by \ref{INDU2}, namely we request:
\begin{equation}\label{cosia}
\tilde C_{i-1}^{1/2}\bar\lambda_i \leq C \tilde
C_0^{1/2}\bar\lambda_1 \quad\mbox{ for all } i=1\ldots k,
\end{equation}
which in view of (\ref{jamea6}) is implied by:
$\bar\lambda_il\leq C (\bar\lambda_il) (\bar\lambda_{i-1}l)^{(N-1)/(2(N+1))}$. We thus set:
$$\bar\lambda_il=C (\bar\lambda_il) (\bar\lambda_{i-1}l)^{(N-1)/(2(N+1))}.$$
The above is a straightforward recursion, which in the closed form yields the formula:
$$\bar\lambda_il = (\bar\lambda_1l)^{\alpha_i} \quad\mbox{with }\;
\alpha_i =\frac{1-\Big(\frac{N-1}{2(N+1)}\Big)^i}{1-\frac{N-1}{2(N+1)}}
\quad\mbox{ for all } i=1\ldots k, $$
that indeed is compatible with (\ref{freq2}).
Towards verifying (\ref{Urid1}), the above implies:
$$\frac{\bar\lambda_{i}/\bar\lambda_{i-1}}{\bar\lambda_{i}^{N(N+1)\gamma}}
= \frac{(\bar\lambda_1l)^{\alpha_i-\alpha_{i-1}}}
{(\bar\lambda_1l)^{N(N+1)\gamma\alpha_i}}l^{N(N+1)\gamma}
= \frac{(\lambda l)^{(N+1)(\alpha_i - \alpha_{i-1}-
    N(N+1)\gamma\alpha_i + N\gamma)}}{\lambda^{N(N+1)\gamma}},$$
Since the exponent in the numerator term above, for every $i=1\ldots
k$ can be estimated by:
$$\alpha_i - \alpha_{i-1}- N(N+1)\gamma\alpha_i + N\gamma\geq 
\Big(\frac{N-1}{2(N+1)}\Big)^{k}- \gamma N(2N+3)\geq \frac{1}{2}
\Big(\frac{N-1}{2(N+1)}\Big)^{k},$$
if only $\gamma$ is sufficiently small in function of $N$ and $k$, we
see that (\ref{Urid1}) is implied by:
\begin{equation}\label{Urid2}
\frac{\lambda l}{\lambda^{2N
\big(\frac{2(N+1)}{N-1}\big)^{k}\gamma}}\geq \Big(\frac{2C}{r_0}
\Big)^{2\big(\frac{2(N+1)}{N-1}\big)^k},
\end{equation}
while \ref{INDU1}-\ref{INDU3} result in:
\begin{align*}
& \|v_k - v\|_1\leq C\tilde C_0^{1/k}, \qquad\qquad\quad
\|w_k -w\|_1\leq C\tilde C_0^{1/2}\big(1+\tilde C_0^{1/2} +\|\nabla v\|_0\big), \\
%\qquad \qquad \qquad\mbox{ for all }\; s\geq 0,
%\tag*{(\theequation)$_1$}\refstepcounter{equation} \label{INDU1}\medskip\\
& \|\nabla^2 v_k\|_0\leq C\tilde C_0^{1/2}\bar\lambda_{1}, \qquad\qquad 
\|\nabla^2 w_k\|_0\leq C\tilde C_0^{1/2}\bar\lambda_1
\big(1+\tilde C_0^{1/2} +\|\nabla v\|_0\big),\\
%\qquad \mbox{ for all }\; s,t\geq 0,
%\tag*{(\theequation)$_2$}\label{INDU2} \medskip\\
&\|\mathcal{D}_k\|_0 \leq C \frac{\tilde
  C_k}{\bar\lambda_k^\gamma}\leq C \frac{\tilde C_0}{(\lambda l)^{(N-1)\alpha_k}}.
%\qquad \mbox{ for all }\; s,t\geq 0. \tag*{(\theequation)$_3$}\label{INDU3}
\end{align*}

\noindent We now summarize the obtained bounds, under the assumption
(\ref{Urid2}), in the following form:
\begin{align*}
& \begin{array}{l}
\| v_k- v\|_1\leq C\lambda^{\gamma/2}\big(\|\mathcal{D}\|_0^{1/2} + lM\big), \vspace{1.5mm}\\
\|w_k -w\|_1\leq C\lambda^{\gamma}\big(\|\mathcal{D}\|_0^{1/2}
+ lM\big) \big(1+ \|\mathcal{D}\|_0^{1/2} +lM +\|\nabla v\|_0\big), 
\end{array}\vspace{3mm} \\
&  \begin{array}{l}
\|\nabla^2v_k\|_0\leq C \displaystyle{\frac{(\lambda
  l)^{N+1}\lambda^{\gamma/2}}{l}}\big(\|\mathcal{D}\|_0^{1/2} + lM\big), 
\vspace{1.5mm}\\ 
\|\nabla^2w_k\|_0\leq C \displaystyle{\frac{(\lambda l)^{N+1}\lambda^{\gamma}}{l}}
\big(\|\mathcal{D}\|_0^{1/2} + lM\big)  \big(1+\|\mathcal{D}\|_0^{1/2} + lM+ \|\nabla v\|_0\big), 
\end{array} \vspace{3mm} \\ 
& \begin{array}{l}
\|\mathcal{D}_k\|_0\leq C \displaystyle{\frac{\lambda^{\gamma}}
{(\lambda l)^{\frac{2(N^2-1)}{N+3}\big(1-\big(\frac{N-1}{2(N+1)}\big)^k\big)}}}\big(
\|\mathcal{D}\|_0 +(lM)^2\big)
\end{array} 
\end{align*}
The claimed \ref{Abound12} - \ref{Abound32} follow by rescaling
$2N\big(\frac{N-1}{2(N+1)}\big)^k\gamma$ to $\gamma$. The proof is done.
\endproof

\section{A proof of Theorem \ref{th_final}}\label{sec4}

The proof of Theorem \ref{th_final} 
relies on iterating Theorems \ref{thm_stage} and \ref{thm_stageCHI} according to the
Nash-Kuiper scheme, whose proof and estimates involving the H\"older
exponent, as in the decomposition Lemma \ref{lem_diagonal}, were given
\cite{lew_improved}. We need to further adjust these iteration
estimates in view of the new assumption in (\ref{Assu}). Recall the following: 

\begin{theorem}\label{th_old}\cite[Theorem 1.4]{lew_improved}
Let $\omega\subset\R^d$ be open, bounded and smooth,
and let $k, J, S\geq 1$. Assume that there exists $l_0\in (0,1)$ such that
the following holds for every $l\in (0, l_0]$. Given
$v\in\mathcal{C}^2(\bar\omega+\bar B_{2l}(0), \R^k)$,  $w\in\mathcal{C}^2(\bar\omega+\bar B_{2l}(0), \R^d)$, 
$A\in\mathcal{C}^{0,\beta}(\bar\omega+\bar B_{2l}(0), \R^{d\times
  d}_\sym)$, and $\gamma, \lambda, M$ with:
\begin{equation}\label{ass_impro}
\gamma\in (0,1),\qquad \lambda>\frac{1}{l},\qquad M\geq
\max\{\|v\|_2, \|w\|_2, 1\},
\end{equation}
there exist  $\tilde v\in\mathcal{C}^2(\bar \omega+\bar B_l(0),\R^k)$,
$\tilde w\in\mathcal{C}^2(\bar\omega+\bar B_l(0),\R^d)$ satisfying:
%\begin{equation}\label{stage_est}
\begin{align*}
& \hspace{-3mm} \left. \begin{array}{l} \|\tilde v - v\|_1\leq
C\lambda^{\gamma/2}\big(\|\mathcal{D}\|_0^{1/2}+lM\big), \vspace{1mm} \\ 
\|\tilde w - w\|_1\leq C\lambda^{\gamma}\big(\|\mathcal{D}\|_0^{1/2}+lM\big)
\big(1+ \|\mathcal{D}\|_0^{1/2}+lM+\|\nabla v\|_0\big), \end{array}\right.%\}
\vspace{5mm}\\
& \hspace{-3mm} \left. \begin{array}{l} \|\nabla^2\tilde v\|_0\leq C{\displaystyle{\frac{(\lambda
 l)^J}{l}\lambda^{\gamma/2}}}\big(\|\mathcal{D}\|_0^{1/2}+lM\big),\vspace{1mm}\\ 
\|\nabla^2\tilde w\|_0\leq C{\displaystyle{\frac{(\lambda
  l)^J}{l}}}\lambda^{\gamma}\big(\|\mathcal{D}\|_0^{1/2}+lM\big)
\big(1+\|\mathcal{D}\|_0^{1/2}+lM+\|\nabla v\|_0\big), \end{array}\right.
\medskip\\ 
& \|\tilde{\mathcal{D}}\|_0\leq C\Big(l^\beta{\|A\|_{0,\beta}}
+\frac{\lambda^{\gamma}}{(\lambda l)^S} \big(
\|\mathcal{D}\|_0 + (lM)^2\big)\Big).
\end{align*}
%\end{equation}
with constants $C$ depending only on $\omega, k, J,S,\gamma$,
and with the defects, as usual, denoted by: $\mathcal{D}=A -\big(\frac{1}{2}(\nabla v)^T\nabla v + \sym\nabla
w\big)$ and $\tilde{\mathcal{D}}=A -\big(\frac{1}{2}(\nabla \tilde
v)^T\nabla \tilde v + \sym\nabla \tilde w\big)$.

\smallskip

\noindent Then, for every $v, w, A$ as above which additionally
satisfy $0<\|\mathcal{D}\|_0\leq 1$, and for every $\alpha$ in:
\begin{equation}\label{rangeAlz}
0< \alpha <\min\Big\{\frac{\beta}{2},\frac{S}{S+2J}\Big\},
\end{equation}
there exist $\bar v\in\mathcal{C}^{1,\alpha}(\bar\omega,\R^k)$ and
$\bar w\in\mathcal{C}^{1,\alpha}(\bar\omega,\R^d)$ with the following properties:
\begin{align*}
& \|\bar v - v\|_1\leq C \big(1+\|\nabla v\|_0\big)^2
\|\mathcal{D}_0\|_0^{1/4}, \quad \|\bar w -
w\|_1\leq C(1+\|\nabla v\|_0)^3\|\mathcal{D}\|_0^{1/4}, \vspace{1mm}\\
& A-\big(\frac{1}{2}(\nabla \bar v)^T\nabla \bar v + \sym\nabla
\bar w\big) =0 \quad\mbox{ in }\; \bar\omega. 
\end{align*}
The constants $C$ above depend only on $\omega, k, A$ and $\alpha$. 
\end{theorem}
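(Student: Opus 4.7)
The plan is to iterate the stated stage hypothesis infinitely many times, producing a sequence $(v_n, w_n) \in \mathcal{C}^2(\bar\omega + \bar B_{l_n}(0))$ of approximate solutions with geometric parameters, so that the defect $\mathcal{D}_n = A - \big(\frac{1}{2}(\nabla v_n)^T\nabla v_n + \sym\nabla w_n\big)$ decays geometrically, while $\|\nabla^2 v_n\|_0$ and $\|\nabla^2 w_n\|_0$ grow at a controlled polynomial rate, just slow enough that the $\mathcal{C}^1$ increments are summable after interpolation against $\nabla^2$ to yield $\mathcal{C}^{1,\alpha}$ convergence. I would begin by mollifying $(v,w)$ on a slightly larger neighbourhood so that $\|v_0\|_2, \|w_0\|_2 < \infty$, absorbing the small change in $\mathcal{D}_0$ into the hypothesis $\|\mathcal{D}\|_0 \leq 1$, and then arrange that the telescoping of the domains $\bar\omega + \bar B_{2l_n}(0) \to \bar\omega + \bar B_{l_n}(0)$ leaves the limit defined on $\bar\omega$ by choosing $\{l_n\}$ summable.

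Concretely, I would set $l_n = l_0 \sigma^{-n}$ and $\lambda_n = l_n^{-1-\eta}$ for a geometric ratio $\sigma>1$ and a small $\eta>0$, and let $M_n$ be the stage's own output bound on second derivatives at step $n$. Applying the stage hypothesis iteratively, I would verify by induction the simultaneous bounds
\[
\|\mathcal{D}_n\|_0 \le \delta_n, \qquad \|\nabla v_n\|_0 + \|\nabla w_n\|_0 \le K(1+\|\nabla v\|_0), \qquad \|\nabla^2 v_n\|_0 + \|\nabla^2 w_n\|_0 \le \tilde M_n,
\]
with $\delta_n$ geometric and $\tilde M_n$ growing polynomially in $n$. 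The deficit estimate $\|\tilde{\mathcal{D}}\|_0 \lesssim l^\beta \|A\|_{0,\beta} + \frac{\lambda^\gamma}{(\lambda l)^S}\big(\|\mathcal{D}\|_0 + (lM)^2\big)$ forces the $l_n^\beta$ piece to be commensurate with $\delta_{n+1}$ (tying $\sigma$ to $\beta$), while the second piece must also absorb $\delta_n$ (tying $(\lambda_n l_n)^S/\lambda_n^\gamma$ to the geometric contraction of $\delta_n$); the feedback of $(l_n M_n)^2$ through the next step is the main algebraic loop to close.

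To upgrade the $\mathcal{C}^1$ convergence to $\mathcal{C}^{1,\alpha}$, I would apply the standard interpolation $\|f\|_{1,\alpha} \le C\|f\|_1^{1-\alpha}\|f\|_2^\alpha$ to the increments $v_{n+1} - v_n$ and $w_{n+1} - w_n$, using the stage bounds $\|v_{n+1}-v_n\|_1 \lesssim \lambda_n^{\gamma/2}\big(\delta_n^{1/2} + l_n M_n\big)$ and $\|\nabla^2 v_{n+1}\|_0 \lesssim \frac{(\lambda_n l_n)^J}{l_n}\lambda_n^{\gamma/2}\big(\delta_n^{1/2} + l_n M_n\big)$. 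The requirement that the resulting geometric product be summable in $n$ is precisely what yields the ceiling $\alpha < S/(S+2J)$: this is the exponent at which the ratio $\|\cdot\|_1^{1-\alpha}\|\cdot\|_2^\alpha$ remains a contracting geometric sequence when the second-derivative blow-up rate is $(\lambda l)^J/l$ and the deficit decay rate is $(\lambda l)^{-S}$. The parallel ceiling $\alpha < \beta/2$ arises from matching $l_n^\beta$ against $\delta_{n+1}^{1/2}$ inside the same interpolation; the latitude to take $\gamma$ and $\eta$ arbitrarily small then allows $\alpha$ to approach, without reaching, $\min\{\beta/2, S/(S+2J)\}$. Telescoping the stage $\mathcal{C}^1$ bounds gives $\|\bar v - v\|_1 \lesssim (1+\|\nabla v\|_0)^2 \sum_n \delta_n^{1/2}$, and since $\sum \delta_n^{1/2}$ is controlled by $\delta_0^{1/4} = \|\mathcal{D}\|_0^{1/4}$ when the contraction is mild enough to accommodate $\alpha$ near the ceiling, the claimed $\|\mathcal{D}\|_0^{1/4}$ bound emerges.

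The principal obstacle is the multi-parameter bookkeeping: one must simultaneously (i) satisfy $\lambda_n > 1/l_n$ and the self-consistency $\tilde M_n \geq \max\{\|v_n\|_2, \|w_n\|_2, 1\}$ at every step, (ii) keep the first-derivative norms uniformly bounded by $K(1+\|\nabla v\|_0)$ despite the quadratic $\|\nabla v\|_0$-dependence appearing in the $\mathcal{C}^1$ stage estimate for $w$, (iii) close the deficit recursion while the $l_n^\beta \|A\|_{0,\beta}$ perturbation does not dominate the geometric decay, and (iv) carry out (i)--(iii) uniformly across every $\alpha$ strictly below $\min\{\beta/2, S/(S+2J)\}$. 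Since the theorem is cited verbatim from \cite[Theorem 1.4]{lew_improved}, the actual task here is essentially one of verification: one checks that the $\lambda^{\gamma/2}$, $\lambda^\gamma$ prefactors in the present hypothesis are exactly those absorbed by the slack in $\eta$ and $\sigma$ in the reference's iteration, and that the domain hypothesis $\lambda>1/l$ translates into the same admissibility range \eqref{rangeAlz}.
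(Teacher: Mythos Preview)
Your outline is correct and follows the standard Nash--Kuiper iteration; the present paper does not itself prove this theorem but cites it from \cite{lew_improved}, and the glimpse of that proof visible through the modifications in Lemma~\ref{lem_NK} (the parametrization $\lambda_i = b/l_i^{a}$ and the multi-step structure) aligns with your sketch. Your heuristic for the exponent $\|\mathcal{D}\|_0^{1/4}$ is imprecise --- a purely geometric $\delta_n$ gives $\sum_n \delta_n^{1/2}\sim \delta_0^{1/2}$, not $\delta_0^{1/4}$, so the quarter power comes from the reference's specific coupling of $l_0, M_0$ to $\|\mathcal{D}\|_0$ at initialization rather than from the tail sum --- but this is a bookkeeping detail of \cite{lew_improved} rather than a structural gap in your plan.
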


\smallskip

\noindent The above formulation does not allow iterating on the
construction in Theorems \ref{thm_stage} and \ref{thm_stageCHI}, because of the 
assumption $\lambda^{1-\gamma} l\geq \sigma_0$. We however observe:

\begin{lemma}\label{lem_NK}
Theorem \ref{th_old}  remains valid if (\ref{ass_impro}) is replaced
by a more restrictive assumption:
\begin{equation}\label{ass_impro2}
\gamma\in (0,1),\qquad \lambda^{1-\gamma} l>\sigma_0,\qquad M\geq
\max\{\|v\|_2, \|w\|_2, 1\},
\end{equation}
where $\sigma_0\geq 1$ is a given constant depending on $\omega,
k, S, J,\gamma$.
\end{lemma}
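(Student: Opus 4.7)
The proof of \cite[Theorem 1.4]{lew_improved} is based on iterating the given ``stage'' construction over a geometric sequence $l_n = l_0 b^{-n}$ of length scales, with corresponding frequencies chosen as $\lambda_n = l_n^{-a}$ for a suitable exponent $a>1$ and base $b>1$, whose values depend on $J, S$ and $\alpha$. Under the original assumption (\ref{ass_impro}), the condition $\lambda_n > 1/l_n$ amounts to $l_n^{1-a}<1$, which holds trivially for $a>1$ and $l_n<1$. The plan is to re-examine this iteration and verify that the adjusted hypothesis $\lambda_n^{1-\gamma} l_n > \sigma_0$ can be guaranteed at every stage, at the cost of slightly enlarging $a$ and shrinking the initial scale $l_0$, while preserving the achievable H\"older range (\ref{rangeAlz}).

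\textbf{Key steps.} First, I would compute $\lambda_n^{1-\gamma} l_n = l_n^{1-a(1-\gamma)}$, which grows as $l_n\to 0$ precisely when $a(1-\gamma)>1$. Since the H\"older exponent $\alpha<\frac{S}{S+2J}$ obtained in Theorem \ref{th_old} depends only on $J, S$ (and not on the specific choice of $a$, which only needs to be bounded away from $1$ in function of the interpolation exponent), one may freely replace $a$ by any slightly larger exponent satisfying $a > \max\{1, 1/(1-\gamma)\}$ without altering the range in (\ref{rangeAlz}). Next, by taking the initial scale $l_0\in(0,l_{0,\mathrm{orig}})$ sufficiently small in terms of $\sigma_0,\gamma, a$, we achieve $l_0^{1-a(1-\gamma)} > \sigma_0$, so that $\lambda_n^{1-\gamma}l_n >\sigma_0$ for all $n\geq 0$, and the modified stage hypothesis is verified at every step. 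Finally, the inductive estimates of \cite{lew_improved} on $\|v_n\|_1, \|w_n\|_1$, on $M_n\geq \max\{\|v_n\|_2,\|w_n\|_2,1\}$, and on the defect $\|\mathcal{D}_n\|_0$ are polynomial in $\lambda_n l_n$ with the same rates $J, S$, hence the concatenation of stages still yields the same bounds, culminating in the same interpolation of $\nabla v, \nabla w$ to $\mathcal{C}^{0,\alpha}$.

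\textbf{Main obstacle.} The delicate point is to ensure that the enlarged value of $a$ does not degrade the rate by which the running defect $\|\mathcal{D}_n\|_0$ decreases, nor the compatibility between successive stages (where one picks the amplitude $M_{n+1}$ in function of the previous $\|\nabla^2\tilde v_n\|_0$). Both are handled by noticing that the ratio between the $C^2$ blow-up rate $(\lambda_n l_n)^J/l_n$ and the defect decay rate $(\lambda_n l_n)^{-S}$ is invariant under a change of $a$, so the balance producing $\alpha<\frac{S}{S+2J}$ is preserved. Once this is checked, the conclusion of Theorem \ref{th_old} carries over verbatim, completing the proof of Lemma \ref{lem_NK}.
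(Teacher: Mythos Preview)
Your proposal is correct and follows the same overall strategy as the paper: verify that the iteration parameters in the proof of \cite[Theorem 1.4]{lew_improved} can be adjusted so that the strengthened condition $\lambda_n^{1-\gamma}l_n>\sigma_0$ holds at every step, without affecting the achievable H\"older range.

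The implementation differs slightly. The paper exploits the fact that the ansatz in \cite{lew_improved} already carries a free multiplicative constant, namely $\lambda_i = b/l_i^{a}$; it then leaves $a$ and the scale sequence untouched and simply appends the requirement $b^{1/2}\geq\sigma_0$ to the existing constraints on $b$ (formulas (4.6), (4.8), (4.11), (4.22), (4.24) in \cite{lew_improved}). Your route, based on the prefactor-free choice $\lambda_n=l_n^{-a}$, instead forces $a>1/(1-\gamma)$ and shrinks $l_0$. This works too, but it requires the extra check---which you correctly supply---that enlarging $a$ does not spoil the balance between the $(\lambda l)^J$ blow-up and the $(\lambda l)^{-S}$ decay governing the exponent $\alpha<\frac{S}{S+2J}$. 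The paper's version sidesteps that verification entirely, so it is the more economical of the two, but both arrive at the same conclusion.
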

\begin{proof}
We only indicate changes in the proof of Theorem
\ref{thm_stage} in \cite{lew_improved}, referring to the formulas
numbering in there. In step 1, the general requirement (4.3) is now replaced by:
\begin{equation*}%\label{propl}
\begin{split}
& l_{i+1}\leq \frac{l_{i}}{2},\quad  l_i\lambda_i^{1-\gamma}>\sigma_0,
\quad M_i\geq \max\{\|v_i\|_2,\|w_i\|_2,1\},\quad M_i\nearrow \infty,
%\quad \sum_{i=0}^\infty\lambda_i^\gamma l_iM_i\leq 2\|\mathcal{D}_0\|_0^{1/2},
\\ & \|\mathcal{D}_i\|_0\leq (l_iM_i)^2, \quad l_iM_i\to 0,
\end{split}
\end{equation*}
taking into account the middle assumption in (\ref{ass_impro2}).
In step 2, formulas (4.6), (4.8) and (4.11) are augmented by the
additional requirement on $b$ in the definition $\lambda_i = b/{l_i^a}$:
$$b^{1/2}\geq \sigma_0.$$
Steps 3 - 5 remain unchanged. In steps 6 and 7, we augment (4.22) and
(4.24) by the same bound above, and proof of their
viability is the same. Step 8 remains unaltered.
\end{proof}

\smallskip

\noindent We note that Lemma \ref{lem_NK} automatically yields the
following result below, where we compute $\frac{S}{S+2J} =
\frac{1}{1+2J/S}$, with: $\frac{J}{S}=\frac{1}{N}\to 0$ as $N\to\infty$
when $k\geq 4$, while in the general case:
\begin{equation*}
\frac{J}{S}=\frac{N+3}{2(N-1)
  \big(1-\big(\frac{N-1}{2(N+1)}\big)^k\big)}\to
\frac{1}{2\big(1-\frac{1}{2^k}\big)}= \frac{2^{k-1}}{2^k-1} \quad \mbox{ as } N\to\infty
\end{equation*}

\begin{corollary}\label{th_NKH}
Let $\omega\subset\R^2$ be an open, bounded and smooth domain, and
let $k\geq 1$. Fix any $\alpha$ as in (\ref{VKrange}).
Then, there exists $l_0\in (0,1)$ such that, for every $l\leq l_0$,
given $v\in\mathcal{C}^2(\bar\omega + \bar B_{2l}(0),\R^k)$,
$w\in\mathcal{C}^2(\bar\omega +\bar B_{2l}(0),\R^2)$,
$A\in\mathcal{C}^{0,\beta}(\bar\omega +\bar B_{2l}(0), \R^{2\times 2}_\sym)$, such that:
$$\mathcal{D}=A-\big(\frac{1}{2}(\nabla v)^T\nabla v + \sym\nabla
w\big) \quad\mbox{ satisfies } \quad 0<\|\mathcal{D}\|_0\leq 1,$$
there exist $\tilde v\in\mathcal{C}^{1,\alpha}(\bar\omega,\R^k)$,
$\tilde w\in\mathcal{C}^{1,\alpha}(\bar\omega,\R^2)$ with the following properties:
\begin{align*}
& \|\tilde v - v\|_1\leq C(1+\|\nabla v\|_0)^2\|\mathcal{D}\|_0^{1/4}, \quad \|\tilde w -
w\|_1\leq C (1+\|\nabla v\|_0)^3\|\mathcal{D}\|_0^{1/4}, \vspace{1mm}\\
& A-\big(\frac{1}{2}(\nabla \tilde v)^T\nabla \tilde v + \sym\nabla
\tilde w\big) =0 \quad\mbox{ in }\; \bar\omega. 
\end{align*}
%\end{equation}
The norms in
the left hand side above are taken on $\bar\omega$, and in the right hand
side on $\bar\omega+ \bar B_{2l}(0)$. The constants $C$ depend only
on $\omega, k, A$ and $\alpha$. 
\end{corollary}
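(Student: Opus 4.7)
\smallskip

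\textbf{Proof proposal for Corollary \ref{th_NKH}.} The plan is to verify that Theorem \ref{thm_stage} (when $k \geq 4$) and Theorem \ref{thm_stageCHI} (for general $k$) each furnish precisely the single-stage hypothesis required by Lemma \ref{lem_NK}, for suitable values of the parameters $J$ and $S$. Once this is done, the conclusion of Lemma \ref{lem_NK} yields $\tilde v \in \mathcal{C}^{1,\alpha}(\bar\omega,\R^k)$ and $\tilde w \in \mathcal{C}^{1,\alpha}(\bar\omega,\R^2)$ with exactly the bounds claimed in the Corollary, for every $\alpha$ in the permissible range $0 < \alpha < \min\{\beta/2, S/(S+2J)\}$ from \eqref{rangeAlz}. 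Choosing $N$ large enough for each given $\alpha$ will push $S/(S+2J)$ above $\alpha$.

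For $k \geq 4$, I would use Theorem \ref{thm_stage}. Rewriting its bound \ref{Abound2} as $\|\nabla^2 \tilde v\|_0 \leq C \frac{(\lambda l)^1}{l}\lambda^{\gamma/2}(\|\mathcal{D}\|_0^{1/2}+lM)$, together with the analogous expression for $\tilde w$, identifies $J=1$; similarly, \ref{Abound3} identifies $S = N$. Hence $S/(S+2J) = N/(N+2) \nearrow 1$ as $N \to \infty$, matching the first line of \eqref{VKrange}. For arbitrary $k\geq 1$, I would instead feed Theorem \ref{thm_stageCHI} into Lemma \ref{lem_NK}: \ref{Abound22} gives $J = N+1$ directly, while \ref{Abound32} gives
\[
S = \frac{2(N^2-1)}{N+3}\Big(1-\Big(\tfrac{N-1}{2(N+1)}\Big)^k\Big).
\]
A short computation then yields
\[
\frac{S}{S+2J} = \Big(1 + \frac{N+3}{(N-1)\big(1-\big(\tfrac{N-1}{2(N+1)}\big)^k\big)}\Big)^{-1} \xrightarrow[N \to \infty]{} \frac{1}{1 + \frac{1}{1-2^{-k}}} = \frac{2^k-1}{2^{k+1}-1},
\]
which matches the second line of \eqref{VKrange}. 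In both cases, fixing $\alpha$ in the respective range of \eqref{VKrange}, I pick $N$ so that $\alpha < S/(S+2J)$ as well as $\alpha < \beta/2$, and Lemma \ref{lem_NK} delivers $\tilde v, \tilde w$ with the stated $\mathcal{C}^{1,\alpha}$-regularity, the quantitative $\|\mathcal{D}\|_0^{1/4}$ approximation estimate, and the pointwise equality $A = \frac{1}{2}(\nabla\tilde v)^T\nabla\tilde v + \sym\nabla\tilde w$ on $\bar\omega$.

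The main obstacle, and the reason Lemma \ref{lem_NK} (rather than the original Theorem \ref{th_old}) is needed, is that the stage constructions of Theorems \ref{thm_stage} and \ref{thm_stageCHI} are only valid under the tighter requirement $\lambda^{1-\gamma} l \geq \sigma_0$ from \eqref{Assu}, which is stronger than the $\lambda > 1/l$ originally assumed in Theorem \ref{th_old}. Lemma \ref{lem_NK} accommodates this by a minor bookkeeping adjustment in the Nash--Kuiper iteration of \cite{lew_improved}: in the frequency progression $\lambda_i = b/l_i^a$, one imposes the additional lower bound $b^{1/2} \geq \sigma_0$, and a routine check confirms that the remaining iterative choices of $l_i, M_i$ and the bounds $\|\mathcal{D}_i\|_0 \leq (l_i M_i)^2$, $l_iM_i \to 0$ remain compatible. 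Consequently, the combination of Lemma \ref{lem_NK} with either Theorem \ref{thm_stage} or Theorem \ref{thm_stageCHI} completes the proof of Corollary \ref{th_NKH}.
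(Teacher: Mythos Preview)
Your proposal is correct and follows exactly the approach of the paper: the paper likewise notes that Lemma \ref{lem_NK} applied to the stage estimates of Theorems \ref{thm_stage} and \ref{thm_stageCHI} directly yields the corollary, and computes the same ratios $J/S = 1/N \to 0$ for $k\geq 4$ and $J/S = \frac{N+3}{2(N-1)\big(1-\big(\frac{N-1}{2(N+1)}\big)^k\big)} \to \frac{2^{k-1}}{2^k-1}$ in general, matching your calculation of $S/(S+2J)$.
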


\bigskip

\noindent The proof of Theorem \ref{th_final} is consequently the same as the proof of
Theorem 1.1 in \cite{lew_improved}, in section 5 in there. We 
replace $\omega$ by its smooth superset, and apply the basic stage
construction in order to first decrease $\|\mathcal{D}\|_0$
below $1$. Then, Corollary \ref{th_NKH} yields the result. \endproof

\end{document}